\numberwithin{equation}{section}
\numberwithin{figure}{section}
\theoremstyle{plain}
\newtheorem{thm}{\protect\theoremname}[section]
  \theoremstyle{definition}
  \newtheorem{defn}[thm]{\protect\definitionname}
  \theoremstyle{plain}
  \newtheorem{cor}[thm]{\protect\corollaryname}
  \theoremstyle{plain}
  \newtheorem{conjecture}[thm]{\protect\conjecturename}
  \theoremstyle{plain}
  \newtheorem{prop}[thm]{\protect\propositionname}
  \theoremstyle{remark}
  \newtheorem{rem}[thm]{\protect\remarkname}
  \theoremstyle{plain}
  \newtheorem{fact}[thm]{\protect\factname}
  \theoremstyle{plain}
  \newtheorem{lem}[thm]{\protect\lemmaname}
  \theoremstyle{plain}
  \newtheorem*{thm*}{\protect\theoremname}
\date{}
\let\originalleft\left
\let\originalright\right
\renewcommand{\left}{\mathopen{}\mathclose\bgroup\originalleft}
\renewcommand{\right}{\aftergroup\egroup\originalright}
  \providecommand{\conjecturename}{Conjecture}
  \providecommand{\corollaryname}{Corollary}
  \providecommand{\definitionname}{Definition}
  \providecommand{\factname}{Fact}
  \providecommand{\lemmaname}{Lemma}
  \providecommand{\propositionname}{Proposition}
  \providecommand{\remarkname}{Remark}
  \providecommand{\theoremname}{Theorem}
\providecommand{\theoremname}{Theorem}
\begin{document}
\global\long\def\d#1{\,{\rm d}#1}

\global\long\def\R{\mathbb{R}}

\global\long\def\C{\mathbb{C}}

\global\long\def\Z{\mathbb{Z}}

\global\long\def\N{\mathbb{N}}

\global\long\def\Q{\mathbb{Q}}

\global\long\def\T{\mathbb{T}}

\global\long\def\F{\mathbb{F}}

\global\long\def\vp{\varphi}

\global\long\def\Sph{\mathbb{S}}

\global\long\def\sub{\subseteq}

\global\long\def\one{\mathbbm1}

\global\long\def\vol#1{\text{vol}\left(#1\right)}

\global\long\def\EE{\mathbb{E}}

\global\long\def\sp{{\rm sp}}

\global\long\def\iprod#1#2{\langle#1,\,#2\rangle}

\global\long\def\uball{B_{2}^{n}}

\global\long\def\conv#1{{\rm conv}\left(#1\right)}

\global\long\def\met#1{{\rm M}\left(#1\right)}

\global\long\def\supp#1{{\rm supp}\left(#1\right)}

\global\long\def\eps{\varepsilon}

\global\long\def\PP{\mathbb{P}}

\global\long\def\vein#1{{\rm vein}\left(#1\right)}

\global\long\def\fvein#1{{\rm vein^{*}}\left(#1\right)}

\global\long\def\bfvein#1{{\rm \mathbf{vein^{*}}}\left(\mathbf{#1}\right)}

\global\long\def\ill#1{{\rm ill\left(#1\right)}}

\global\long\def\fill#1{{\rm ill^{*}\left(#1\right)}}

\global\long\def\ovr#1{{\rm ovr\left(#1\right)}}

\global\long\def\norm#1{\left\Vert #1\right\Vert }

\global\long\def\proj{{\rm Pr}}

\title{Approximations of convex bodies by measure-generated sets}

\author{Han Huang }

\address{Department of Mathematics, University of Michigan, Ann Arbor, MI.}

\email{sthhan@umich.edu (H. Huang).}

\author{Boaz A. Slomka}

\address{Department of Mathematics, University of Michigan, Ann Arbor, MI.}

\email{bslomka@umich.edu (B. A. Slomka).}
\begin{abstract}
Given a Borel measure $\mu$ on $\R^{n}$, we define a convex set
by
\[
\met{\mu}=\bigcup_{\substack{0\le f\le1,\\
\int_{\R^{n}}f\d{\mu}=1
}
}\left\{ \int_{\R^{n}}yf\left(y\right)\d{\mu}\left(y\right)\right\} ,
\]
where the union is taken over all $\mu$-measurable functions $f:\R^{n}\to\left[0,1\right]$
with $\int_{\R^{n}}f\d{\mu}=1$. We study the properties of these
measure-generated sets, and use them to investigate natural variations
of problems of approximation of general convex bodies by polytopes
with as few vertices as possible. In particular, we study an extension
of the vertex index which was introduced by Bezdek and Litvak. As
an application, we provide a lower bound for certain average norms
of centroid bodies of non-degenerate probability measures. 
\end{abstract}

\maketitle

\section{Introduction}

\subsection{Background and motivation }

Problems pertaining to approximation, on their various aspects and
applications, have been extensively studied in the theory of convex
bodies, see e.g. \cite{Gr93}, and \cite{Br07}. 

An example for such a problem is that of approximating a convex body,
namely a compact convex set with non-empty interior, by a polytope
(the convex hull of finitely many points) with as few vertices as
possible, within a given Banach-Mazur distance. More precisely, for
any convex body $K\sub\R^{n}$ centered at the origin, and $R>1$
we define:

\[
d_{R}\left(K\right)=\inf\left\{ N\in\N\,:\,\exists\,P=\conv{x_{1},\dots,x_{N}}\sub\R^{n}\,,\,\frac{1}{R}P\sub K\sub P\right\} ,
\]
where $\conv{x_{1},\dots,x_{N}}$ is the convex hull of $x_{1},\dots,x_{N}$.
\\

A result of Barvinok \cite{Barvinok14} implicitly states that for
any centrally-symmetric convex body $K\sub\R^{n}$ and $2<R<\sqrt{n}$,
\begin{equation}
d_{R}\left(K\right)\le e^{cn\log R/R^{2}}\label{eq:Barvinok_d}
\end{equation}
for some universal constant $c>0$. In particular, $d_{c\sqrt{n}}\left(K\right)\le n$.
We also mention the result of Szarek \cite{Szarek2014} who shows
that for any convex body with center of mass at the origin and $2<R<n$,
\begin{equation}
d_{R}\left(K\right)\le ne^{cn/R}.\label{eq:Szarek_d}
\end{equation}

\noindent For the case $R=n$, a similar result to that of Szarek
can be found in \cite{Bra+17}. 

We remark that both approaches in \cite{Barvinok14} and \cite{Ba+08}
work in the fine scale regime, for which an optimal result was very
recently proven in \cite{NNR17}. 

It is also worth pointing out that there is still a large gap between
the symmetric and the non-symmetric case. For example, it is not clear
whether in the non-symmetric case $d_{\sqrt{n}}\left(K\right)$ can
have a polynomial bound in $n$.\\

Note that for the special case $R=\infty$, $d_{\infty}\left(K\right)$
trivially equals $n+1$, e.g., by scaling away the vertices of a centered
simplex. However, replacing the number of vertices of the approximating
polytope by a different ``cost'' leads to the following quantity:
\[
D_{R}\left(K\right)=\inf\left\{ \sum_{i=1}^{N}\norm{x_{i}}_{K}\,:\,\exists\,P=\conv{x_{1},\dots,x_{N}}\sub\R^{n}\,,\,\frac{1}{R}P\sub K\sub P\right\} .
\]
Here, $\norm{\cdot}_{K}$ stands for the gauge function of $K$ which,
in the case where $K=-K$, is the norm on $\R^{n}$ which is induced
by $K$. This quantity is also linear-invariant, and is equivalent
to $d_{R}\left(K\right)$ for any finite $R$ in the sense that $d_{R}\left(K\right)\le D_{R}\left(K\right)\le Rd_{R}\left(K\right)$.
However, $D_{\infty}\left(K\right)$ is no longer trivial. In fact,
it coincides with the vertex index of $K$, denoted by $\vein K$,
which was introduced by Bezdek and Litvak in \cite{BezLitvak07},
and further studied in \cite{GluLit08} and \cite{GlusLitvak12}.
For example, it was shown that for any centrally-symmetric convex
body $K\sub\R^{n}$, 
\[
2n\le\vein K\le24n^{3/2}
\]
The lower bound, which is attained for $B_{1}^{n}$, was proved in
\cite{GluLit08}, and the upper bound, which (up to a universal constant)
is attained for the Euclidean unit ball $B_{2}^{n}$, was proved in
\cite{GlusLitvak12}. We remark that the choice of the $l_{1}$ cost
$\sum_{i=1}^{N}\norm{x_{i}}_{K}$ seems arbitrary and can be replaced
by different linear-invariant costs, such as $\left(\sum\norm{x_{i}}_{K}^{p}\right)^{1/p}$
for any $p\ge1$. 

\subsection{Metronoids}

The main purpose of this note is to introduce a natural way of generating
convex bodies from Borel measures, along with associated costs, and
study new quantities which are closely related to $D_{R}\left(K\right)$,
$d_{R}\left(K\right)$, and $\vein K$. Our construction goes as follows: 
\begin{defn}
\label{def:convmu} Given a Borel measure $\mu$ on $\R^{n}$, we
define 

\[
\met{\mu}=\bigcup_{\substack{0\le f\le1,\\
\int_{\R^{n}}f\d{\mu}=1
}
}\left\{ \int_{\R^{n}}yf\left(y\right)\d{\mu}\left(y\right)\right\} ,
\]
where the union is taken over all measurable functions $f:\R^{n}\to\left[0,1\right]$
with $\int_{\R^{n}}f\d{\mu}=1$. We call the set $\met{\mu}\sub\R^{n}$,
the \textsl{metronoid}\footnote{originating from the greek word ``metron'' for ``measure'' (the
authors thank B. Vritsiou for the greek lesson).} generated by $\mu$
\end{defn}

Note that $\met{\mu}$ is always a closed convex set, which is bounded
if $\mu$ has finite first moment. In particular, for $x_{1},\dots,x_{N}\in\R^{n}$
the discrete measure $\mu=\sum_{i=}^{N}\delta_{x_{i}}$ generates
the convex hull of $\left\{ x_{1},\dots,x_{N}\right\} $. By adding
weights, $w_{1},\dots,w_{N}>0$, that is, considering the weighted
measure $\mu=\sum_{i=1}^{N}w_{i}\delta_{x_{i}}$, the generated convex
body $\met{\mu}$ becomes a ``weighted convex hull'', where each
point $x_{i}$ can only participate in the convex hull with a coefficient
$\lambda_{i}$ whose maximal value is $w_{i}$ . In other words, we
have:
\[
\met{\mu}=\left\{ x\in\R^{n}\,:\,\exists\left\{ \lambda_{i}\right\} _{i=1}^{N}\,\text{s.t.}\,0\le\lambda_{i}\le w_{i},\,\sum_{i=1}^{N}\lambda_{i}=1,\,\text{and }x=\sum_{i=1}^{N}\lambda_{i}x_{i}\right\} .
\]

\noindent Also note that if $\mu\left(\R^{n}\right)<1$ then $\met{\mu}=\emptyset$,
and if $\mu\left(\R^{n}\right)=1$ then $\met{\mu}$ is the singleton
$\left\{ \int x\d{\mu}\left(x\right)\right\} $, namely the center
of mass of $\mu$. 

\noindent One may consider various other interesting classes of metronoids,
for example, the class of bodies generated by uniform measures on
convex bodies, which turn out to be closely related to floating bodies.
For detailed discussion on special classes of metronoids and their
properties, see Section \ref{sec:Met_prop} below. 

The notion of metronoids leads to the following variations of $d_{R}\left(K\right)$
and $D_{R}\left(K\right)$: 

\[
d_{R}^{*}\left(K\right)=\inf\left\{ \mu\left(\R^{n}\right)\,:\,\frac{1}{R}\met{\mu}\sub K\sub\met{\mu}\right\} ,
\]
 and 
\[
D_{R}^{*}\left(K\right)=\inf\left\{ \int_{\R^{n}}\norm x_{K}\d{\mu}\left(x\right)\,:\,\frac{1}{R}\met{\mu}\sub K\sub\met{\mu}\right\} .
\]

\noindent Clearly, we have that $d_{R}^{*}\left(K\right)\le d_{R}\left(K\right)$,
and $D_{R}^{*}\left(K\right)\le D_{R}\left(K\right)$. One can also
verify that the above quantities are both linear-invariant. While
it is plausible that the family of metronoids generated by all finite
Borel measure coincides with the family of all convex bodies, it is
still interesting to consider the approximation by metronoids since
for different values of $R$, the associated costs $\mu\left(\R^{n}\right)$
and $\int_{\R^{n}}\norm x_{K}\d{\mu}$ are not necessarily minimized
for $\mu$ for which $\met{\mu}=K$. 

For $R=\infty$, we obtain the following variation of the vertex index,
which we refer to as the fractional vertex index:

\[
\fvein K=\inf\left\{ \int_{\R^{n}}\norm x_{K}d\mu\left(x\right)\,:\,K\sub\met{\mu}\right\} .
\]

\noindent We remark that the motivation of Bezdek and Litvak to study
the vertex index is its relation to Hadwiger's famous problem of illuminating
a convex body by light sources, and to the Gohberg-Markus-Hadwiger
equivalent problem of covering a convex body by smaller copies of
itself (see e.g., \cite{BrassMoser05} and references therein). Fractional
versions of the illumination and covering problems were studied in
\cite{Naszodi09} and \cite{ArtSlom14-FracCov}. 

\subsection{Main results}

\subsubsection{\textbf{Upper and lower bounds.} Our first main result provides a
bound for $d_{\sqrt{n}}^{*}\left(K\right)$ and $D_{\sqrt{n}}^{*}\left(K\right)$
in the centrally-symmetric case: }
\begin{thm}
\label{thm:main_upper_sym}There exists a universal constant $C>0$
such that for every centrally-symmetric convex body $K\sub\R^{n}$,
one has 
\[
d_{\sqrt{n}}^{*}\left(K\right)\le C,\,\,\text{and}\,\,\,D_{\sqrt{n}}^{*}\left(K\right)\le Cn.
\]
\end{thm}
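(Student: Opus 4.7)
The plan is to use the affine invariance of $d_{R}^{*}$ and $D_{R}^{*}$ to reduce to the case where $K$ is in John's position, then to exhibit a single rotation-invariant measure on a sphere of radius of order $n$ whose metronoid is exactly $\sqrt{n}\uball$. Concretely, after applying the linear transformation guaranteed by John's theorem, one may assume $\uball\sub K\sub\sqrt{n}\uball$. In this position, \emph{any} measure $\mu$ with $\met{\mu}=\sqrt{n}\uball$ automatically satisfies both required inclusions, since $K\sub\sqrt{n}\uball=\met{\mu}$ and $\tfrac{1}{\sqrt{n}}\met{\mu}=\uball\sub K$.

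To construct such a $\mu$, I would take $\mu=2\sigma_{R}$, where $\sigma_{R}$ denotes the uniform probability measure on $RS^{n-1}$ and $R>0$ is to be chosen. Since $\mu$ is rotation-invariant, so is $\met{\mu}$; hence $\met{\mu}=r\uball$ for some scalar $r\ge 0$. The bathtub principle identifies the bang-bang maximizer of the support function of $\met{\mu}$ at a unit vector $v$ as $f=\one_{\left\{\iprod{y}{v}\ge t\right\}}$, with $t$ determined by $\mu(\iprod{y}{v}\ge t)=1$. Because $\mu(\R^{n})=2$ and $\mu$ is centrally symmetric, $t=0$, and hence
\[
r=2R\,\EE[Z_{1}^{+}]=R\,\EE[|Z_{1}|],
\]
where $Z_{1}$ is a single coordinate of a uniform random point on $S^{n-1}$. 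A direct computation with the $(1-s^{2})^{(n-3)/2}$ density gives the well-known estimate $\EE[|Z_{1}|]=\Theta(n^{-1/2})$, so setting $R=\sqrt{n}/\EE[|Z_{1}|]=\Theta(n)$ yields $r=\sqrt{n}$ and therefore $\met{\mu}=\sqrt{n}\uball$ as required.

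Both cost bounds then follow directly. By construction $\mu(\R^{n})=2$, giving $d_{\sqrt{n}}^{*}(K)\le 2$. For $D_{\sqrt{n}}^{*}$, the measure $\mu$ is supported on the sphere of radius $R=O(n)$, and the inclusion $\uball\sub K$ forces $\norm{y}_{K}\le\norm{y}_{\uball}=R$ on that support; therefore
\[
\int_{\R^{n}}\norm{x}_{K}\d{\mu}(x)\le R\cdot\mu(\R^{n})=2R=O(n),
\]
giving $D_{\sqrt{n}}^{*}(K)\le Cn$.

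The conceptual ingredients---rotation-invariance of $\met{\mu}$ for rotation-invariant $\mu$, the bathtub principle for the support function, and the resulting formula $r=R\,\EE|Z_{1}|$ when $\mu(\R^{n})=2$---are the heart of the argument, and I expect them to be routine once the general properties of metronoids (presumably established in Section \ref{sec:Met_prop}) are in place. The main delicate point is to extract an explicit universal constant $C$ from the relation $\EE[|Z_{1}|]\asymp n^{-1/2}$ that is valid for \emph{all} $n$, not merely asymptotically; this can be done by using the exact identity $\EE[|Z_{1}|]=2\Gamma(n/2)/((n-1)\sqrt{\pi}\,\Gamma((n-1)/2))$ together with standard Gamma-function bounds.
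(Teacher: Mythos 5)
Your proposal is correct and follows essentially the same route as the paper: reduce to John's position, take $\mu=2\sigma_{R}$ on a sphere of appropriately chosen radius, and use the bang-bang characterization of $h_{\met{\mu}}$ (the paper's Proposition \ref{ExtremePointofMu}) together with $\EE\left|Z_{1}\right|\asymp n^{-1/2}$ to identify $\met{\mu}$ as a Euclidean ball; the only cosmetic difference is that you normalize so that $\met{\mu}=\sqrt{n}\uball\supseteq K$ while the paper normalizes so that $\met{\mu}=\uball$ is the circumscribed John ellipsoid, which is the same construction up to a dilation.
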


\noindent Our second main result provides a general upper for $d_{R}^{*}\left(K\right)$
and $D_{R}^{*}\left(K\right)$:
\begin{thm}
\label{thm:main_upper_bdd}Let $K\sub\R^{n}$ be a centered convex
body. Then for $1<R\le n$ one has 
\[
d_{R}^{*}\left(K\right)\le\exp\left(1+\frac{n-1}{R-1}\right),\,\,\text{and}\,\,D_{R}^{*}\left(K\right)\le R\exp\left(1+\frac{n-1}{R-1}\right).
\]
\end{thm}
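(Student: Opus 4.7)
The plan is to construct a Borel measure $\mu$ supported on $RK$ whose metronoid contains $K$, with small total mass. A natural choice---one that achieves the claimed exponent $n-1$ rather than $n$---is to concentrate $\mu$ near the boundary of $RK$: take $\mu := c\cdot\sigma_{c}|_{\partial(RK)}$, a scalar multiple of the cone measure on $\partial(RK)$ (normalized so that $\sigma_{c}(\partial(RK)) = \vol{RK}$), or equivalently the limit of the thin-shell measures $\mu_{\eps} := c_{\eps}\,\one_{RK \setminus (R-\eps)K}\d{y}$ as $\eps \to 0^{+}$.

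Since $\supp{\mu} \sub RK$, we have $\met{\mu} \sub RK$, so $\frac{1}{R}\met{\mu} \sub K$ automatically. For the reverse inclusion $K \sub \met{\mu}$, recall that the support function of the metronoid in direction $u$ is
\[
h_{\met{\mu}}(u) = \sup\left\{\int \iprod{y}{u}\,f(y)\d{\mu}(y) \,:\, 0 \le f \le 1,\ \int f\d{\mu} = 1\right\},
\]
with optimizer of the form $f = \one_{\{\iprod{y}{u} \ge \tau_{u}\}}$. A clean sufficient condition for $h_{\met{\mu}}(u) \ge h_{K}(u)$ is that the cap $\partial(RK) \cap \{\iprod{y}{u} \ge h_{K}(u)\}$ has $\mu$-mass at least $1$. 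Combining the cone-measure identity $\d{\sigma_{c}} = \frac{1}{n}h_{RK}(n_{y})\d{\sigma}$ (with $\sigma$ the $(n-1)$-dimensional surface measure on $\partial(RK)$ and $n_y$ the outward unit normal) with the variational identity
\[
\frac{d}{dR}V_{R}(u) = \int_{\partial(RK) \cap \{\iprod{y}{u} \ge h_{K}(u)\}} h_{K}(n_{y})\d{\sigma}(y), \qquad V_{R}(u) := \vol{RK \cap \{\iprod{y}{u} \ge h_{K}(u)\}},
\]
the cone measure of the cap equals $(R/n)\,dV_{R}(u)/dR$. The sufficient condition thus becomes $c \ge n/[R\cdot dV_{R}(u)/dR]$ for every $u$, giving $\mu(\R^{n}) \le nR^{n-1}\vol{K}/\min_{u}(dV_{R}(u)/dR)$.

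The crux is a sharp Gr\"{u}nbaum-type lower bound for $dV_{R}(u)/dR$ over centered convex bodies, with the cone (apex in direction $u$) as extremizer:
\[
\frac{d}{dR}V_{R}(u) \ge \frac{n^{n+1}(R-1)^{n-1}}{(n+1)^{n}}\vol{K}.
\]
Integrating this from $1$ to $R$ yields the ``generalized Gr\"{u}nbaum'' inequality $V_{R}(u) \ge \bigl(n(R-1)/(n+1)\bigr)^{n}\vol{K}$, which recovers the classical Gr\"{u}nbaum bound in the limit $R \to \infty$. Granting the derivative estimate then gives
\[
\mu(\R^{n}) \le \left(\frac{n+1}{n}\right)^{n}\left(\frac{R}{R-1}\right)^{n-1} \le e\cdot e^{(n-1)/(R-1)} = \exp\!\left(1 + \frac{n-1}{R-1}\right),
\]
using the elementary inequalities $(1+1/n)^{n} \le e$ and $(1+1/(R-1))^{n-1} \le e^{(n-1)/(R-1)}$. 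This is the claimed bound for $d_{R}^{*}(K)$. The bound on $D_{R}^{*}(K)$ is then immediate: since $\supp{\mu} \sub \partial(RK) \sub RK$, one has $\norm{y}_{K} \le R$ for $\mu$-a.e.\ $y$, so $D_{R}^{*}(K) \le \int \norm{y}_{K}\d{\mu} \le R\,\mu(\R^{n})$.

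The main obstacle is the sharp derivative estimate on $dV_{R}/dR$. A natural attack uses Brunn's concavity principle applied to the section function $s \mapsto \mathrm{vol}_{n-1}(K \cap \{\iprod{x}{u} = s\})^{1/(n-1)}$, combined with the centering constraint $\int_{K} \iprod{x}{u}\d{x} = 0$, via a symmetrization or extremal calculation that identifies cones as the extremal case. If one uses instead the cheaper uniform measure $\mu = c\,\one_{RK}\d{y}$ together with the integrated (generalized Gr\"{u}nbaum) estimate alone, one only obtains $\mu(\R^{n}) \le \bigl(R(n+1)/(n(R-1))\bigr)^{n}$, which falls short of the target by a factor of roughly $R/(R-1)$ in the exponent; this is precisely why concentrating $\mu$ on $\partial(RK)$ is essential.
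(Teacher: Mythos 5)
Your construction is sound in outline, but the proof as written has a genuine gap: the entire argument hinges on the ``sharp Gr\"unbaum-type lower bound'' $\frac{d}{dR}V_{R}(u)\ge\frac{n^{n+1}(R-1)^{n-1}}{(n+1)^{n}}\vol K$, which you state, test against the cone, and then explicitly grant rather than prove. This derivative estimate is strictly stronger than its integrated form (the integrated inequality could hold with equality at the endpoints while the derivative dips below the cone's value in between), and ``a symmetrization or extremal calculation that identifies cones as the extremal case'' is a research program, not a proof. Since this is the crux of your approach, the proposal is incomplete.

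Moreover, the motivation you give for taking the harder route --- that the uniform measure on $RK$ ``falls short by a factor of roughly $R/(R-1)$ in the exponent,'' so that concentrating $\mu$ on $\partial(RK)$ is ``essential'' --- is incorrect. The paper's proof uses exactly the uniform measure $\d{\mu}=\frac{\exp(1+\frac{n-1}{R-1})}{\vol{RK}}\one_{RK}\d x$ and attains the stated bound. The point you miss is that one should not estimate $\vol{L_{\theta}}/\vol{RK}$ by an integrated generalized Gr\"unbaum inequality (which indeed only yields exponent $n$); instead one writes
\[
\frac{\vol{L_{\theta}}}{\vol{RK}}=\frac{\vol{L_{\theta}}}{\vol{(RK)_{+}}}\cdot\frac{\vol{(RK)_{+}}}{\vol{RK}},
\]
bounds the second factor by $1/e$ via classical Gr\"unbaum, and bounds the first by $(1-\tfrac1R)^{n-1}\ge e^{-(n-1)/(R-1)}$ via Brunn's concavity of $t\mapsto\vol{K_{t}}^{1/(n-1)}$, comparing the section function on $[1,R]$ (after rescaling so the cut is at $t=1$ and $h=R$) to the chord through its value at $t=1$ that vanishes at $t=R$. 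The exponent $n-1$ appears because the cap is compared to the half-body $(RK)_{+}$, not to $RK$; no sharp cone-extremal inequality is needed. I recommend either supplying a complete proof of your derivative estimate or, more simply, replacing the boundary measure by the uniform measure and the unproved estimate by this elementary two-step volume bound.
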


Note that Theorems \ref{thm:main_upper_sym} and \ref{thm:main_upper_bdd}
are reminiscent of \ref{eq:Barvinok_d} and \ref{eq:Szarek_d}, but
do not follow from them formally. We believe that a further investigation
of $d_{R}^{*}\left(K\right)$ and $D_{R}^{*}\left(K\right)$ in the
non-symmetric case may shed light on the classical counterpart $d_{R}\left(K\right)$
and $D_{R}\left(K\right)$, e.g., in the regime of $R\approx\sqrt{n}$.

An immediate corollary of Theorem \ref{thm:main_upper_bdd} provides
the following upper bound for the fractional vertex index,
\begin{cor}
\label{cor:upper_bdd}For every centered convex body $K\sub\R^{n}$
one has $\fvein K\le D_{n}^{*}\left(K\right)\le e^{2}n$. 
\end{cor}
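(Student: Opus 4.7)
The plan is to derive the corollary directly from Theorem \ref{thm:main_upper_bdd} by specializing to $R = n$, combined with an essentially trivial comparison of feasibility regions.

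First, I would establish the inequality $\fvein K \le D_{n}^{*}\left(K\right)$. Any Borel measure $\mu$ on $\R^{n}$ satisfying the two-sided inclusion $\frac{1}{n}\met{\mu} \sub K \sub \met{\mu}$ automatically satisfies the weaker single inclusion $K \sub \met{\mu}$ featured in the definition of the fractional vertex index. Since both $\fvein K$ and $D_{n}^{*}\left(K\right)$ are infima of the \emph{same} cost functional $\int_{\R^{n}}\norm{x}_{K}\d{\mu}\left(x\right)$, taken over nested feasibility sets, the infimum over the smaller set (defining $D_{n}^{*}\left(K\right)$) is bounded below by the infimum over the larger set (defining $\fvein K$). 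This yields the first inequality.

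For the second inequality, I would apply Theorem \ref{thm:main_upper_bdd} at the endpoint $R = n$ of the admissible range $1 < R \le n$. Plugging in gives
\[
D_{n}^{*}\left(K\right) \le n\exp\left(1 + \frac{n-1}{n-1}\right) = n\exp\left(2\right) = e^{2}n,
\]
which is precisely the desired bound. (The edge case $n=1$ is handled separately and trivially, since a centered symmetric interval $K=\left[-a,a\right]$ can be generated by $\delta_{-a}+\delta_{a}$ with cost $2 \le e^{2}$.)

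There is no genuine obstacle here: all the substantive work is already packaged inside Theorem \ref{thm:main_upper_bdd}. The corollary simply records the pleasant numerical consequence of choosing $R = n$ and observing that requiring only the outer inclusion $K \sub \met{\mu}$ gives a smaller infimum than requiring both inclusions simultaneously. The novelty lies not in the argument but in the fact that the fractional vertex index admits a \emph{linear} bound in $n$, in stark contrast to the $n^{3/2}$ bound known for the classical vertex index $\vein K$.
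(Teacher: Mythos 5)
Your proof is correct and follows exactly the paper's route: the paper likewise obtains the bound by noting that the feasibility set defining $D_{n}^{*}\left(K\right)$ is contained in that defining $\fvein K$, and then plugging $R=n$ into Theorem \ref{thm:main_upper_bdd} to get $n\exp\left(1+\frac{n-1}{n-1}\right)=e^{2}n$. Your explicit treatment of the degenerate case $n=1$ (where the range $1<R\le n$ is empty) is a small extra care the paper omits, but it does not change the argument.
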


Our third main result provides a lower bound for the fractional vertex
index in the centrally-symmetric case
\begin{thm}
\label{thm:fvein_lower_bdd}There exists a universal constant $c>0$
such that for every centrally-symmetric convex body $K\sub\R^{n}$,
one has:
\[
\fvein K\ge c\sqrt{n}.
\]
\end{thm}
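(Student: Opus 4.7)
The plan is to prove the lower bound via linear programming duality. By linear invariance of $\fvein{\cdot}$, I may place $K$ in John's position, so that $\uball\sub K\sub\sqrt{n}\,\uball$; in particular $h_{K}(\theta)\geq\|\theta\|_{2}$ for all $\theta$. Fix any $\mu$ with $K\sub\met{\mu}$; I must show $\int\|y\|_{K}\d{\mu}\geq c\sqrt{n}$.

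The starting point is the observation that, by LP duality for the definition of $\met{\mu}$, the optimal function in $h_{\met{\mu}}(\theta)=\sup\{\int\langle y,\theta\rangle f\d{\mu}:0\le f\le1,\,\int f\d{\mu}=1\}$ is $f=\one_{A_{\theta}}$ for a half-space $A_{\theta}=\{\langle y,\theta\rangle>t_{\theta}\}$ with $\mu(A_{\theta})=1$. Thus for every $\theta$, $h_{K}(\theta)\leq\int_{A_{\theta}}\langle y,\theta\rangle\d{\mu}$. Averaging against an arbitrary probability measure $\nu$ on $\R^{n}$ and bounding $\langle y,\theta\rangle\one_{A_{\theta}}(y)\le\langle y,\theta\rangle_{+}$ (which is valid whenever $t_{\theta}\geq 0$, a mild assumption one verifies for $\mu$ with enough positive mass) gives
\[
\int h_{K}(\theta)\d{\nu}(\theta)\;\leq\;\int F_{\nu}(y)\d{\mu}(y),\qquad F_{\nu}(y):=\int\langle y,\theta\rangle_{+}\d{\nu}(\theta).
\]
If $\nu$ satisfies the pointwise estimate $F_{\nu}(y)\le\|y\|_{K}$ for every $y$ — equivalently, the positive centroid body of $\nu$ is contained in $K^{\circ}$ — then the right-hand side is at most $\int\|y\|_{K}\d{\mu}$, and we obtain $\fvein{K}\geq\int h_{K}\d{\nu}$. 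So the problem reduces to constructing a probability measure $\nu$ with $F_{\nu}\leq\|\cdot\|_{K}$ and $\int h_{K}\d{\nu}\gtrsim\sqrt{n}$.

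For $K$ close to $\uball$ the natural choice is a Gaussian $\nu=\mathcal{N}(0,s^{2}I)$ with $s$ calibrated so that $s\|y\|_{2}/\sqrt{2\pi}\leq\|y\|_{K}$ for all $y$; this forces $s\leq\sqrt{2\pi}/R(K)$, and since $K\supseteq\uball$ yields $\ell^{*}(K)\geq\ell^{*}(\uball)\sim\sqrt{n}$, one gets $\fvein{K}\geq c\sqrt{2\pi}\,\ell^{*}(K)/R(K)$. This is of order $\sqrt{n}$ exactly when $R(K)$ is $O(1)$, in particular for the Euclidean ball, and saturates the theorem. For general $K$ in John's position, one can instead exploit John's decomposition: contact points $u_{i}\in\partial K\cap\partial\uball$ with weights $c_{i}>0$ satisfying $\sum c_{i}u_{i}u_{i}^{T}=I_{n}$ and $\sum c_{i}=n$ produce a natural discrete symmetric candidate $\nu=\frac{1}{2n}\sum c_{i}(\delta_{u_{i}}+\delta_{-u_{i}})$, whose centroid body is controlled via Cauchy–Schwarz: $F_{\nu}(y)\le\|y\|_{2}/(2\sqrt{n})\leq\tfrac{1}{2}\|y\|_{K}$. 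Appropriate rescaling of $\nu$ along the ``polyhedral'' directions of $K$ (e.g., scaled vertices when $K$ is cross-polytope-like) then realizes $\int h_{K}\d{\nu}\gtrsim\sqrt{n}$.

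The main obstacle is exhibiting a \emph{single} test measure $\nu$ that yields the bound $c\sqrt{n}$ uniformly over all centrally-symmetric $K$ in John's position: the purely Gaussian construction fails when $R(K)$ is large (e.g., $K\sim\sqrt{n}B_{1}^{n}$), while purely discrete contact-point constructions fail when $K$ is smooth. I expect the right approach is a convex combination/mixture of a Gaussian and a suitably scaled John-contact measure, calibrated so that the constraint $F_{\nu}\leq\|\cdot\|_{K}$ is tight in both the ``Euclidean'' and ``polyhedral'' directions of $K$ simultaneously. Alternatively, one can attack the LP dual directly — since $\fvein{K}=\sup\{\int h_{K}\d{\nu}:Z_{+}(\nu)\sub K^{\circ}\}$ is itself a linear programme — and deduce the $\sqrt{n}$ bound via strong duality combined with an Urysohn-type inequality applied to the dual measure.
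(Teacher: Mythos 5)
Your duality framework is sound as far as it goes. The inequality $h_{K}\left(\theta\right)\le\int_{\R^{n}}\iprod y{\theta}_{+}\d{\mu}\left(y\right)$ for every $\theta$ whenever $K\sub\met{\mu}$ is exactly Proposition \ref{prop:necessary_1} of the paper (and it needs no caveat about $t_{\theta}\ge0$: just restrict any admissible $f$ to the open half-space $\left\{ \iprod y{\theta}>0\right\} $). Averaging against a probability measure $\nu$ with $F_{\nu}\le\norm{\cdot}_{K}$ then gives the valid weak-duality bound $\fvein K\ge\int h_{K}\d{\nu}$; specialised to $K=B_{1}^{n}$ and $\nu=\frac{1}{2n}\sum_{i}\left(\delta_{e_{i}}+\delta_{-e_{i}}\right)$ this is precisely how the paper proves $\fvein{B_{1}^{n}}=2n$ (Proposition \ref{prop:fvein_B1}).

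However, the proof is not complete: the construction of a dual certificate $\nu$ for a \emph{general} symmetric $K$ is missing, and you acknowledge this yourself. Both candidates you propose fall short quantitatively. The Gaussian yields a bound of order $\ell^{*}\left(K\right)/R\left(K\right)$, which degenerates to $O\left(1\right)$ when the circumradius in John's position is of order $\sqrt{n}$. The contact-point measure $\nu=\frac{1}{2n}\sum c_{i}\left(\delta_{u_{i}}+\delta_{-u_{i}}\right)$ does satisfy $F_{\nu}\le\frac{1}{2}\norm{\cdot}_{K}$, but it only gives $\int h_{K}\d{\nu}=\frac{1}{n}\sum c_{i}h_{K}\left(u_{i}\right)\ge1$, since a contact point merely guarantees $h_{K}\left(u_{i}\right)\ge\iprod{u_{i}}{u_{i}}=1$; nothing forces these directions to see $K$ at depth $\sqrt{n}$. ``Appropriate rescaling along the polyhedral directions'' and ``a calibrated mixture'' are not arguments: bodies such as products of balls and cross-polytopes of various dimensions are simultaneously far from both model cases, and no single explicit test measure is exhibited. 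The paper closes this gap with a genuinely different and nontrivial high-dimensional input, the Bourgain--Szarek proportional Dvoretzky--Rogers factorization (Theorem \ref{thm:DvoretzkyRogerGeometricVersion}) applied to $K^{\circ}$: it produces a subspace $E$ with $\dim E\ge n/2$ on which $\proj_{E}K$ is within Banach--Mazur distance $O\left(\sqrt{n}\right)$ of $B_{1}^{E}$, and then combines monotonicity of $\fvein{\cdot}$ under projections and under Banach--Mazur distance (Fact \ref{fact:BM_distance}) with $\fvein{B_{1}^{E}}=2\dim E$. If you wish to stay inside your LP-dual formulation, you would still need an existence result of comparable strength for the dual measure; the appeal to ``strong duality plus an Urysohn-type inequality'' is not supplied and is not a routine step.
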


We remark that, up to a constant, Corollary \ref{cor:upper_bdd} is
sharp for the cross-polytope $B_{1}^{n}=\left\{ \left(x_{1},\dots,x_{n}\right)^{T}\in\R^{n}\,:\,\sum_{i=1}^{n}\left|x_{i}\right|\le1\right\} $,
and Theorem \ref{thm:fvein_lower_bdd} is sharp for the Euclidean
unit ball $B_{2}^{n}=\left\{ x\in\R^{n}\,:\,\left|x\right|^{2}\le1\right\} $.
In fact, in Section \ref{sec:fvein_exact_comp} we show that $\fvein{B_{1}^{n}}=2n$
, and $\fvein{B_{2}^{n}}=\sqrt{2\pi n}\left(1+{\rm o}\left(1\right)\right)$.

The proof of Theorem \ref{thm:fvein_lower_bdd} employs a proportional
Dvoretzky-Rogers factorization Theorem by Bourgain and Szarek \cite{BourSz88}.
However, we suspect that a proof by symmetrization should show that
the extremizer in the general case is $B_{2}^{n}$:
\begin{conjecture}
For any centered convex body $K\sub\R^{n}$, $\fvein K\ge\fvein{B_{2}^{n}}\approx\sqrt{n}$. 
\end{conjecture}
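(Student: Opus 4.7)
I would follow the symmetrization strategy suggested by the authors. Since $\fvein{\cdot}$ is linear-invariant---a short verification using $T\met{\mu} = \met{T_*\mu}$ and $\norm{Tx}_{TK} = \norm{x}_K$---and iterated Steiner symmetrizations with respect to a dense family of hyperplanes through the origin converge in the Hausdorff metric to a Euclidean ball of the same volume (a classical result of Mani), it suffices to establish (i) $\fvein{\cdot}$ is non-increasing under a single Steiner symmetrization $K \mapsto S_H K$ for any hyperplane $H$ through the origin, and (ii) $\fvein{\cdot}$ is continuous in the Hausdorff metric on uniformly bounded families of convex bodies. Step (ii) should follow from a routine compactness and truncation argument, exploiting that near-optimal witnesses may be assumed to be supported in a bounded region depending only on $K$.

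For step (i), given a centered convex body $K$ and a near-optimal witness $\mu$ with $K \sub \met{\mu}$ and $\int \norm{x}_K \d{\mu(x)} < \fvein{K} + \eps$, the natural candidate is the Steiner-symmetrized measure $\mu'$: fix a unit normal $u$ to $H$, disintegrate $\mu = \int \mu_{x'} \d{x'}$ along fibers $\ell_{x'} = x' + \R u$ for $x' \in H$, and replace each conditional $\mu_{x'}$ by its symmetric decreasing rearrangement on $\ell_{x'}$ about $x'$. Two verifications are then needed: the inclusion $S_H K \sub \met{\mu'}$ and the cost bound $\int \norm{x}_{S_H K}\d{\mu'(x)} \le \int \norm{x}_K \d{\mu(x)}$. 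The cost bound should reduce, fiber by fiber, to a one-dimensional Hardy--Littlewood rearrangement inequality together with the observation that $t \mapsto \norm{x' + tu}_{S_H K}$ is even and non-decreasing in $|t|$ since $S_H K$ is reflection-symmetric through $H$. For the inclusion, one can use the variational identity
\[
h_{\met{\mu}}(\theta) = \inf_{t \in \R}\Bigl\{ t + \int_{\R^n}(\iprod{y}{\theta} - t)_{+}\d{\mu(y)}\Bigr\},
\]
which reduces the desired comparison $h_{S_H K}(\theta) \le h_{\met{\mu'}}(\theta)$ to an estimate on level-set integrals, amenable to the same rearrangement techniques.

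The main obstacle is precisely this inclusion for directions $\theta$ with a nonzero $u$-component, since Steiner-symmetrizing the measure does not Steiner-symmetrize its metronoid in general. A cleaner substitute is the reflection-average $\mu'' = \tfrac{1}{2}\bigl(\mu + (R_H)_*\mu\bigr)$, for which a direct Radon--Nikodym construction of the selection function yields $\tfrac{1}{2}(K + R_H K) \sub \met{\mu''}$ with cost $\int \norm{x}_{M_H K}\d{\mu(x)}$, where $M_H K = \tfrac{1}{2}(K + R_H K)$ is the Minkowski symmetrization. One could then try to run the entire program with $M_H$ in place of $S_H$---iterated Minkowski symmetrizations also converge to a Euclidean ball in the Hausdorff metric---but this requires the delicate pointwise comparison $\norm{x}_{M_H K} \le \norm{x}_K$, which already fails for some simple centered $K$ and so must be arranged differently, perhaps by first replacing $K$ by its difference body. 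An alternative route is to cast $\fvein{K}$ as an infinite-dimensional linear program via the variational identity above and exhibit a rotationally symmetric dual-feasible test functional realizing $c\sqrt{n}$ for $B_2^n$; applying it against an arbitrary centered $K$ would reduce the conjecture to an averaged comparison between $h_K$ and the constant support function of $B_2^n$, which might be accessible via a symmetrization argument directly on the dual side.
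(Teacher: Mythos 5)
This statement is a conjecture in the paper, not a theorem: the authors explicitly say only that they ``suspect that a proof by symmetrization should show that the extremizer in the general case is $B_{2}^{n}$,'' and what they actually prove is the weaker bound $\fvein K\ge c\sqrt{n}$ (Theorem \ref{thm:fvein_lower_bdd}) via the Bourgain--Szarek proportional Dvoretzky--Rogers factorization, together with the exact computation $\fvein{B_{2}^{n}}=\sqrt{2\pi n}\left(1+{\rm o}\left(1\right)\right)$. So there is no proof in the paper for you to match, and your proposal, as you yourself acknowledge, is a plan rather than a proof.

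The gaps you identify are genuine and remain open in your write-up. The decisive step (i) --- monotonicity of $\fvein{\cdot}$ under Steiner symmetrization --- is never established: the inclusion $S_{H}K\sub\met{\mu'}$ for the fiberwise-rearranged measure is exactly where the argument breaks for directions $\theta$ oblique to $H$, since rearranging $\mu$ along fibers does not rearrange the level sets $\left\{ y:\iprod y{\theta}>t\right\}$ that enter your variational formula for $h_{\met{\mu}}$, and no rearrangement inequality you cite controls these. The Minkowski-symmetrization substitute fails for the reason you note: one needs $\norm x_{M_{H}K}\le\norm x_{K}$, i.e. $K\sub\frac{1}{2}\left(K+R_{H}K\right)$, which is false for general centered $K$, so the cost of the reflected-averaged witness is not controlled by the original cost. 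The dual/LP route is only gestured at. Each of these is a real missing idea, not a routine verification; until one of them is supplied, the proposal proves nothing beyond what Theorem \ref{thm:fvein_lower_bdd} already gives. A constructive suggestion: your reflection-average observation does yield, with the paper's Proposition \ref{ExtremePointofMu}, the correct statement for the \emph{reflection-symmetric} directions, and combining it with the paper's reformulation via $Z_{1}$-centroid bodies (Proposition \ref{prop:vein_Z1_equiv}) might be a more promising place to attempt a symmetrization on the measure side rather than on the body side.
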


\subsubsection{\textbf{An application to centroid bodies.} }

\noindent The $L_{p}$-centroid bodies were introduced by Lutwak and
Zhang \cite{LuZh97} (under different normalization than we use below)
and have been studied extensively by various authors. In particular,
$L_{p}$-centoid bodies have become an indispensable part of the theory
of asymptotic convex geometry since the seminal work of Paouris \cite{Paouris2006}.
For a survey on this subject, see \cite[Ch. 5]{BGVV14}, and references
therein.

Given $p\ge1$ and a Borel probability measure $\mu$ with bounded
$p^{th}$ moment, the $L_{p}$-centroid body $Z_{p}\left(\mu\right)$
is defined by the relation 
\[
h_{Z_{p}\left(\mu\right)}\left(\theta\right)=\left(\int_{\R^{n}}\left|\iprod x{\theta}\right|^{p}\d{\mu}\left(x\right)\right)^{1/p},
\]
where $\iprod{\cdot}{\cdot}$ stands for the standard Euclidean inner
product on $\R^{n}$, and $h_{K}\left(\theta\right)=\sup_{K}\iprod x{\theta}$
is the support function of a convex body $K\sub\R^{n}$ (see e.g.,
\cite{SchBook} for properties of supporting functionals). 

For a log-concave measure $\mu$, the bodies $Z_{p}\left(\mu\right)$
admit many remarkable properties due to the phenomenon of concentration
of measure. For example, reverse Hölder inequalities for norms, which
imply that, for some universal constant $c>0$, $Z_{p}\left(\mu\right)\sub Z_{q}\left(\mu\right)\sub c\frac{q}{p}\,Z_{p}\left(\mu\right)$
for any $1\le p\le q$. Moreover, for $p\ge1$, one has 
\[
\left(\int_{\R^{n}}\norm x_{Z_{p}\left(\mu\right)}^{p}\d{\mu}\left(x\right)\right)^{1/p}\apprge\frac{\sqrt{n}}{p}.
\]

It turns out that for $p=1$, the above estimation holds without the
assumption that $\mu$ is log-concave. In fact, this result is a direct
corollary of Theorem \ref{thm:fvein_lower_bdd}: 
\begin{cor}
\label{cor:centrod_low_bdd}There exists a universal constant $c>0$
such that for any non-degenerate probability Borel measure $\mu$
with bounded first moment, one has 
\[
\int_{\R^{n}}\norm x_{Z_{1}\left(\mu\right)}\d{\mu}\left(x\right)\ge c\sqrt{n}.
\]
\end{cor}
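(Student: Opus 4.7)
\textbf{Proof plan for Corollary \ref{cor:centrod_low_bdd}.}
The strategy is to reduce directly to Theorem \ref{thm:fvein_lower_bdd} applied to $K := Z_{1}(\mu)$, by constructing, from $\mu$, an explicit test measure $\nu$ showing that $\fvein K \le 2 \int \norm x_{Z_{1}(\mu)} \d{\mu}(x)$. The body $K$ is automatically centrally-symmetric (its support function $\theta \mapsto \int |\iprod x\theta|\d{\mu}(x)$ is even), and non-degeneracy of $\mu$ ensures $K$ has non-empty interior, so Theorem \ref{thm:fvein_lower_bdd} will apply.

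First I would symmetrize. Let $\tilde\mu$ be the pushforward $\frac{1}{2}(\mu + (-{\rm Id})_{*}\mu)$; then $Z_{1}(\tilde\mu) = Z_{1}(\mu) = K$, and, since $\norm\cdot_{K}$ is even, $\int \norm x_{K}\d{\tilde\mu}(x) = \int \norm x_{K}\d{\mu}(x)$. Set $\nu := 2\tilde\mu$, a Borel measure of total mass $2$.

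The central claim is $K \sub \met{\nu}$, which I would verify through support functions. Fix a direction $\theta \in \R^{n}$. Because $\tilde\mu$ is symmetric, one can choose $f_{\theta}\colon \R^{n}\to[0,1]$ equal to $1$ on $\{\iprod x\theta > 0\}$, $0$ on $\{\iprod x\theta < 0\}$, and a suitable constant on the hyperplane $\{\iprod x\theta = 0\}$ so that $\int f_{\theta}\d{\nu} = 1$ (the symmetry of $\tilde\mu$ is exactly what makes this possible). Then
\[
h_{\met{\nu}}(\theta) \;\ge\; \int \iprod x\theta\, f_{\theta}(x)\d{\nu}(x) \;=\; 2\int_{\iprod x\theta>0}\iprod x\theta\d{\tilde\mu}(x) \;=\; \int |\iprod x\theta|\d{\tilde\mu}(x) \;=\; h_{K}(\theta),
\]
so $K\sub \met{\nu}$, as desired.

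With this inclusion in hand, the definition of $\fvein\cdot$ gives
\[
\fvein K \;\le\; \int \norm x_{K}\d{\nu}(x) \;=\; 2\int \norm x_{K}\d{\tilde\mu}(x) \;=\; 2\int \norm x_{Z_{1}(\mu)}\d{\mu}(x),
\]
and applying Theorem \ref{thm:fvein_lower_bdd} to the centrally-symmetric body $K$ yields $\int \norm x_{Z_{1}(\mu)}\d{\mu}(x) \ge (c/2)\sqrt n$. The only subtlety is handling atoms of $\tilde\mu$ on the hyperplane $\{\iprod x\theta = 0\}$, which is precisely what the freedom to choose $f_{\theta}$ in the interval $[0,1]$ (not merely as an indicator) is for; I expect this to be the only genuine bookkeeping step in an otherwise short argument.
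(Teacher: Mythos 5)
Your argument is correct and follows the same route as the paper: the paper also reduces to Theorem \ref{thm:fvein_lower_bdd} applied to $Z_{1}(\mu)$ by symmetrizing $\mu$ into a mass-two measure whose metronoid contains $\frac{1}{2}Z_{1}(\mu)$ via half-space test functions (this is Proposition \ref{prop:MuCentroid-1} combined with Proposition \ref{prop:vein_Z1_equiv}). The only cosmetic difference is that the paper places a unit atom at the origin and states a two-sided identity $\met{\mu+\delta_{0}}=\frac{1}{2}Z_{1}(\mu)$, whereas you prove directly the single inequality $h_{\met{\nu}}(\theta)\ge h_{Z_{1}(\mu)}(\theta)$ that the corollary actually needs.
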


We remark that the proof of Corollary \ref{cor:centrod_low_bdd} (or,
equivalently, of Theorem \ref{thm:fvein_lower_bdd}) is based on high-dimensional
phenomena, rather than concentration of measure (which is used to
obtain the same result in the case of log-concave measures). Other
results in the spirit of Corollary \ref{cor:centrod_low_bdd}, where
the log-concavity assumption on the measure may be relaxed, can be
found in \cite{Klartag10,P12,K12,K15,KZ15}. 

This paper is organized as follows. In Section \ref{sec:Met_prop}
we study the properties of metronoids, including a general characterization
of their support functions, descriptions of several classes of metronoids,
and the various properties of metronoids generated by discrete measures.
In Section \ref{sec:Estimating_d_D}, we prove Theorems \ref{thm:main_upper_sym}
and \ref{thm:main_upper_bdd}. In Section \ref{sec:fvein}, we discuss
the fractional vertex index, provide precise computations of the fractional
vertex index of $B_{1}^{n}$ and $B_{2}^{n}$, and prove Theorem \ref{thm:fvein_lower_bdd}.
We conclude this paper with a proof of Corollary \ref{cor:centrod_low_bdd}
in Section \ref{sec:app_centroid}.\\

\noindent \textbf{Acknowledgements.} The authors thank Alon Nishry
and Beatrice Vritsiou for useful discussions. The second named author
thanks Shiri Artstein-Avidan for helpful conversations on possible
extensions of the vertex index, and for her comments regarding the
written text. 

\section{Properties of Metronoids\label{sec:Met_prop}}

\subsection{Descriptions of Metronoids}

In this section we give several geometric descriptions of metronoids
in terms of their generating measures. 

\subsubsection{\textbf{A general characterization}}

Let $\mu$ be any finite Borel measure on $\mathbb{R}^{n}$. We begin
with providing a formula for the support function of $\met{\mu}$.

For each $\theta\in\Sph^{n-1}$, define 
\begin{equation}
R\left(\theta\right):=\max\{R\in\R\,,\,\mu\left\{ \left(x\in\mathbb{R}^{n}\,:\,\iprod x{\theta}\ge R\}\right)\ge1\right\} .\label{eq:R_theta}
\end{equation}
 Correspondingly, we define $f_{\theta}:\R\rightarrow[0,1]$ as follows:
\begin{eqnarray}
f_{\theta}(t)=\left\{ \begin{array}{cc}
0, & \text{ if \ensuremath{t<R},}\\
1, & \text{ if \ensuremath{t>R},}\\
0, & \text{ \ensuremath{t=R} and \ensuremath{\mu\left(\left\{ \iprod x{\theta}=R\left(\theta\right)\right\} \right)=0},}\\
\frac{1-\mu\left(\left\{ \iprod x{\theta}>R(\theta)\right\} \right)}{\mu\left(\left\{ \langle x,\theta\rangle=R(\theta)\right\} \right)}, & \text{ \ensuremath{t=R} and \ensuremath{\mu\left(\left\{ \iprod x{\theta}=R\left(\theta\right)\right\} \right)\neq0}}.
\end{array}\right.\label{eq:f_theta}
\end{eqnarray}
One can easily verify that $0\le f_{\theta}\le1$, and $\int_{\R^{n}}f_{\theta}\left(\iprod x{\theta}\right))\d{\mu}\left(x\right)=1$.
Therefore, 

\[
y_{\theta}:=\int_{\R^{n}}xf_{\theta}\left(\iprod x{\theta}\right)\d{\mu}\left(x\right)\in\met{\mu}.
\]
 The following proposition describes the support function of $\met{\mu}$
in direction $\theta$, in terms of $y_{\theta}$:
\begin{prop}
\label{ExtremePointofMu} With the notation above, for any $y\in\met{\mu}$,
and $\theta\in\Sph^{n-1}$, $\langle y,\theta\rangle\le\langle y_{\theta},\theta\rangle$.
Namely, $h_{\met{\mu}}(\theta)=\langle y_{\theta},\theta\rangle$.
\end{prop}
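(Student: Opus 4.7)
The statement is a one-dimensional bathtub (layer-cake) principle in disguise. Indeed, by definition, any $y\in\met{\mu}$ is of the form $y=\int x\,f(x)\,{\rm d}\mu(x)$ for some measurable $f\colon\R^{n}\to[0,1]$ with $\int f\,{\rm d}\mu=1$, so
\[
\iprod{y_{\theta}}{\theta}-\iprod{y}{\theta}=\int_{\R^{n}}\iprod{x}{\theta}\bigl(f_{\theta}(\iprod{x}{\theta})-f(x)\bigr)\,{\rm d}\mu(x),
\]
and I want to show the right-hand side is nonnegative. My plan is to first check the normalization $\int f_{\theta}(\iprod{x}{\theta})\,{\rm d}\mu(x)=1$. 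This is immediate from the definition of $R(\theta)$ in \eqref{eq:R_theta} and the case analysis in \eqref{eq:f_theta}: outside the hyperplane $H_{\theta}=\{\iprod{x}{\theta}=R(\theta)\}$ the function $f_\theta(\iprod{x}{\theta})$ is the indicator of $\{\iprod{x}{\theta}>R(\theta)\}$, contributing mass $\mu(\{\iprod{x}{\theta}>R(\theta)\})$, and on $H_{\theta}$ the assigned constant value is chosen precisely so that the total equals $1$ (the definition of $R(\theta)$ via the maximum guarantees that the leftover mass $1-\mu(\{\iprod{x}{\theta}>R(\theta)\})$ is nonnegative and at most $\mu(H_\theta)$, making the quotient lie in $[0,1]$).

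Once both $f$ and $f_{\theta}\circ\iprod{\cdot}{\theta}$ are admissible competitors with the same total mass $1$, I subtract a multiple of the constraint: write
\[
\iprod{y_{\theta}}{\theta}-\iprod{y}{\theta}=\int_{\R^{n}}\bigl(\iprod{x}{\theta}-R(\theta)\bigr)\bigl(f_{\theta}(\iprod{x}{\theta})-f(x)\bigr)\,{\rm d}\mu(x),
\]
since the extra term $R(\theta)\int(f_{\theta}(\iprod{x}{\theta})-f(x))\,{\rm d}\mu(x)$ vanishes. Now I argue pointwise nonnegativity of the integrand. On $\{\iprod{x}{\theta}>R(\theta)\}$ one has $f_{\theta}(\iprod{x}{\theta})=1\ge f(x)$ while the linear factor is positive; on $\{\iprod{x}{\theta}<R(\theta)\}$ one has $f_{\theta}(\iprod{x}{\theta})=0\le f(x)$ while the linear factor is negative; on $H_{\theta}$ the linear factor vanishes. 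In every case the product is $\ge 0$, which gives $\iprod{y}{\theta}\le\iprod{y_{\theta}}{\theta}$.

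Since $y\in\met{\mu}$ was arbitrary and $y_{\theta}\in\met{\mu}$ itself, this yields $h_{\met{\mu}}(\theta)=\iprod{y_{\theta}}{\theta}$. The only place any real care is required is the hyperplane $H_\theta$: if $\mu(H_\theta)>0$, the supremum in the definition of $h_{\met{\mu}}(\theta)$ may fail to be attained by an indicator, which is exactly why the fractional value on $H_\theta$ in \eqref{eq:f_theta} was built in. Aside from that bookkeeping, the proof is a direct rearrangement argument and I expect no further obstacle.
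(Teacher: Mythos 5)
Your proof is correct and is essentially the paper's own argument: the paper likewise splits according to the sign of $\iprod{x}{\theta}-R(\theta)$ and uses the pointwise inequality $\left(f_{\theta}(\iprod{x}{\theta})-f(x)\right)\iprod{x}{\theta}\ge\left(f_{\theta}(\iprod{x}{\theta})-f(x)\right)R(\theta)$ together with the vanishing of $\int\left(f_{\theta}-f\right)R\,\d{\mu}$, which is exactly your ``subtract a multiple of the constraint'' step. No substantive difference.
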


\begin{proof}
Fix $\theta\in\Sph^{n-1}$ and let $y\in\met{\mu}$. Then there exists
a function $0\le f(x)\le1$ such that $\int_{\R^{n}}f(x)\d{\mu}\left(x\right)=1$
and $\int_{\R^{n}}xf(x)\d{\mu}\left(x\right)=y$. Then, denoting $R=R\left(\theta\right)$,
we have:
\begin{eqnarray*}
\iprod{y_{\theta}}{\theta}-\iprod y{\theta} & = & \int_{\R^{n}}f_{\theta}\left(\iprod x{\theta}\right)\iprod x{\theta}\d{\mu}\left(x\right)-\int_{\R^{n}}f(x)\iprod x{\theta}\d{\mu}\left(x\right)\\
 & = & \int_{\R^{n}}\left(f_{\theta}\left(\iprod x{\theta}\right)-f(x)\right)\iprod x{\theta}\d{\mu}\left(x\right)\\
 & = & \int_{\langle x,\theta\rangle>R}\left(f_{\theta}\left(\iprod x{\theta}\right)-f(x)\right)\iprod x{\theta}\d{\mu}\left(x\right)\\
 & + & \int_{\langle x,\theta\rangle<R}\left(f_{\theta}\left(\iprod x{\theta}\right)-f(x)\right)\iprod x{\theta}\d{\mu}\left(x\right)\\
 & + & \int_{\langle x,\theta\rangle=R}\left(f_{\theta}\left(\iprod x{\theta}\right)-f(x)\right)R\d{\mu}\left(x\right).
\end{eqnarray*}
By the definition of $f_{\theta},$ it follows that $f_{\theta}\left(\iprod x{\theta}\right)-f(x)\ge0$
whenever $\langle x,\theta\rangle>R$, and $f_{\theta}\left(\iprod x{\theta}\right)-f(x)\le0$
whenever $\langle x,\theta\rangle<R$. Therefore, we have that for
every $x\in\R^{n}$, $\left(f_{\theta}\left(\iprod x{\theta}\right)-f(x)\right)\iprod x{\theta}\ge\left(f_{\theta}\left(\iprod x{\theta}\right)-f(x)\right)R,$
which together which the above equality implies that
\[
\iprod{y_{\theta}}{\theta}-\iprod y{\theta}\ge\int_{\R^{n}}\left(f_{\theta}\left(\iprod x{\theta}\right)-f(x)\right)R\d{\mu}\left(x\right)=0.
\]
\end{proof}
For each $\theta\in\Sph^{n-1}$, define $H_{\theta}^{+}:=\left\{ x\in\R^{n}\,:\,\iprod x{\theta}>0\right\} $.
In the sequel, we will also need the following useful fact:
\begin{prop}
\label{prop:necessary_1}Let $K\sub\R^{n}$ be a convex body. Suppose
$\mu$ is a measure such that $K\sub\met{\mu}$. Then for every $\theta\in\Sph^{n-1}$
we have that 
\[
h_{K}\left(\theta\right)\le\int_{H_{\theta}^{+}}\iprod x{\theta}\d{\mu}\left(x\right).
\]
\end{prop}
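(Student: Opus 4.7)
The plan is to combine the monotonicity $h_K(\theta)\le h_{\met{\mu}}(\theta)$, which follows from the hypothesis $K\subseteq \met{\mu}$, with the explicit formula for $h_{\met{\mu}}(\theta)$ given by Proposition \ref{ExtremePointofMu}. The key observation is that the bound on the right-hand side only involves the positive half-space $H_\theta^+$, so we just need to discard the contributions to $\iprod{y_\theta}{\theta}$ coming from $\{\iprod{x}{\theta}\le 0\}$, which should be harmless since they are non-positive.

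In more detail, I would first note by Proposition \ref{ExtremePointofMu} that
\[
h_K(\theta)\le h_{\met{\mu}}(\theta)=\iprod{y_\theta}{\theta}=\int_{\R^n}\iprod{x}{\theta}\,f_\theta(\iprod{x}{\theta})\d{\mu}(x),
\]
and then split the last integral according to the sign of $\iprod{x}{\theta}$. On $H_\theta^+=\{\iprod{x}{\theta}>0\}$, the inequality $0\le f_\theta\le 1$ gives $\iprod{x}{\theta}f_\theta(\iprod{x}{\theta})\le \iprod{x}{\theta}$; on $\{\iprod{x}{\theta}=0\}$ the integrand vanishes; and on $\{\iprod{x}{\theta}<0\}$ we have $\iprod{x}{\theta}f_\theta(\iprod{x}{\theta})\le 0$ because $f_\theta\ge 0$. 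Adding up the three pieces yields
\[
\iprod{y_\theta}{\theta}\le \int_{H_\theta^+}\iprod{x}{\theta}\d{\mu}(x),
\]
which is exactly the stated inequality.

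There is essentially no obstacle here: the argument is just the observation that the specific weight function $f_\theta$ realizing the support function of $\met{\mu}$ is sandwiched between $0$ and $1$, together with a sign bookkeeping over the half-spaces determined by $\theta$. In particular, we never need to know the value of the threshold $R(\theta)$ from \eqref{eq:R_theta}; whether it is positive, zero, or negative, the argument is identical, because the only facts used about $f_\theta$ are $0\le f_\theta\le 1$ and the identity $h_{\met{\mu}}(\theta)=\int \iprod{x}{\theta}f_\theta(\iprod{x}{\theta})\d{\mu}(x)$ provided by Proposition \ref{ExtremePointofMu}.
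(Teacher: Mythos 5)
Your argument is correct and is essentially the paper's proof: the entire content is the sign-splitting over $\{\iprod x{\theta}>0\}$, $\{\iprod x{\theta}=0\}$, $\{\iprod x{\theta}<0\}$ together with $0\le f\le1$. The only difference is the entry point: the paper takes the support point $x_{\theta}\in K$ with $h_{K}(\theta)=\iprod{x_{\theta}}{\theta}$ and applies the estimate to an \emph{arbitrary} function $f$ representing $x_{\theta}$ in $\met{\mu}$, which avoids invoking Proposition \ref{ExtremePointofMu} altogether and hence does not require $R(\theta)$ and $f_{\theta}$ to be well-defined (those are constructed under the standing assumption that $\mu$ is finite), so it is marginally more self-contained and general than your route through $h_{K}(\theta)\le h_{\met{\mu}}(\theta)=\iprod{y_{\theta}}{\theta}$.
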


\begin{proof}
Fix $\theta\in\Sph^{n-1}$, and let $x_{\theta}\in K$ such that
$h_{K}\left(\theta\right)=\iprod{x_{\theta}}{\theta}$. Since $K\sub\met{\mu}$,
there exists a function $0\le f\le1$ such that $x_{\theta}=\int_{\R^{n}}xf\left(x\right)\d{\mu}\left(x\right)$,
and hence 
\begin{align*}
h_{K}\left(\theta\right) & =\int_{\R^{n}}\iprod x{\theta}f\left(x\right)\d{\mu}\left(x\right)\le\int_{H_{\theta}^{+}}\iprod x{\theta}f\left(x\right)\d{\mu}\left(x\right)\le\int_{H_{\theta}^{+}}\iprod x{\theta}\d{\mu}\left(x\right).
\end{align*}
\end{proof}

\subsubsection{\textbf{Discrete measures}}

In this section we provide some geometric description of metronoids
that are generated by discrete measures. 

The first property states that the metronoid generated by a finite
discrete measure is a polytope: 
\begin{prop}
Let $x_{1},\dots,x_{m}\in\R^{n}$, $w_{1},\dots,w_{m}>0$, and define
$\mu=\sum_{i=1}^{m}w_{i}\delta_{x_{i}}$. Then $\met{\mu}$ is a polytope.
\end{prop}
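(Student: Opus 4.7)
The plan is to reduce the definition of $\met{\mu}$ in the discrete case to the image of an explicitly-described polyhedron under a linear map. Since the support of $\mu = \sum_{i=1}^m w_i \delta_{x_i}$ is the finite set $\{x_1,\dots,x_m\}$, any measurable $f \colon \R^n \to [0,1]$ enters the integrals $\int f \d\mu$ and $\int y f(y) \d\mu(y)$ only through its values $f(x_1),\dots,f(x_m) \in [0,1]$. Therefore nothing is lost by restricting attention to functions determined by a vector $(f(x_1),\dots,f(x_m)) \in [0,1]^m$.

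With this in place, I would introduce the reparameterization $\lambda_i = w_i f(x_i)$, so that the admissible $f$'s correspond bijectively to $\lambda \in \R^m$ with $0 \le \lambda_i \le w_i$ and $\sum_{i=1}^m \lambda_i = 1$. Under this change of variables the defining formula becomes
\[
\met{\mu} \;=\; \Bigl\{\sum_{i=1}^m \lambda_i x_i \,:\, 0 \le \lambda_i \le w_i,\ \sum_{i=1}^m \lambda_i = 1 \Bigr\}.
\]
Thus $\met{\mu} = T(P)$, where $P \subset \R^m$ is the set of admissible $\lambda$'s and $T \colon \R^m \to \R^n$ is the linear map $T\lambda = \sum_{i=1}^m \lambda_i x_i$.

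To finish, I would note that $P$ is the intersection of the box $\prod_{i=1}^m [0, w_i]$ with the affine hyperplane $\{\sum \lambda_i = 1\}$, hence a bounded intersection of finitely many closed half-spaces, i.e., a polytope in $\R^m$. The image of a polytope under a linear (indeed affine) map is again a polytope, so $\met{\mu} = T(P)$ is a polytope in $\R^n$. There is no substantive obstacle here; the only thing to be careful about is verifying that the parameterization $f \leftrightarrow \lambda$ genuinely exhausts the union defining $\met{\mu}$, which is immediate once one observes that only the values of $f$ on $\supp \mu$ are relevant.
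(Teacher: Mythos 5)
Your proposal is correct and is essentially the paper's own argument: both exhibit $\met{\mu}$ as the image under a linear map of a bounded polyhedron in $\R^{m}$ (a box intersected with an affine hyperplane), the only difference being the trivial reparameterization $\lambda_{i}=w_{i}f(x_{i})$ versus $\lambda_{i}=f(x_{i})$. Nothing further is needed.
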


\begin{proof}
Consider the linear map $F:\R^{m}\to\R^{n}$ defined by $F\left(\left(\lambda_{1},\dots,\lambda_{m}\right)\right)=\sum_{i=1}^{m}\lambda_{i}w_{i}x_{i}$,
and consider the polytope $P=\left\{ \left(\lambda_{1},\dots,\lambda_{m}\right)\in\R^{m}\,:\,0\le\lambda_{1},\dots,\lambda_{m}\le1,\,\sum_{i=1}^{m}\lambda_{i}w_{i}=1\right\} $.
Then, by definition, $\met{\mu}=F\left(P\right)$, and hence a polytope
as well. 
\end{proof}
For our next observation we need the following notation. Given $x_{1},\dots,x_{m}\in\R^{n}$,
denote the Minkowski sum of the segments $\left\{ \left[0,x_{i}\right]\right\} _{i=1}^{m}$
by $Z\left(x_{1},\dots,x_{m}\right)$. That is, 
\[
Z\left(x_{1},\dots,x_{m}\right)=\left\{ \lambda_{1}x_{1}+\dots+\lambda_{m}x_{m}\,:\,\lambda_{1},\dots,\lambda_{m}\in\left[0,1\right]\right\} .
\]
In the following proposition, we show that given a measure $\mu=\sum_{i=1}^{m}w_{i}\delta_{x_{i}}$,
its generated metronoid is always contained in the intersection of
$\conv{x_{1},\dots x_{m}}$ and the zonotope $Z\left(w_{1}x_{1},\dots,w_{m}x_{m}\right)$.
\begin{prop}
\label{prop:Z_cap_P}Let $x_{1},\dots x_{m}\in\R^{n}$, $w_{1},\dots,w_{m}>0$,
and set $\mu=\sum_{i=1}^{m}w_{i}\delta_{x_{i}}$. Then 
\[
\met{\mu}\sub\conv{x_{1},\cdots,x_{m}}\cap Z\left(w_{1}x_{1},\dots,w_{m}x_{m}\right).
\]
\end{prop}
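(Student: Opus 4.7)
The plan is to use the explicit description of $\met{\mu}$ for discrete measures that was already established in the introduction, namely
\[
\met{\mu}=\left\{ \sum_{i=1}^{m}\lambda_{i}x_{i}\,:\,0\le\lambda_{i}\le w_{i},\,\sum_{i=1}^{m}\lambda_{i}=1\right\}.
\]
Given this parametrization, both containments follow by reading off the coefficients in two different ways. I would verify this identity quickly first: a function $0\le f\le1$ with $\int f\,\d{\mu}=1$ is determined on the support of $\mu$ by the values $f(x_{i})\in[0,1]$, and the constraint becomes $\sum w_{i}f(x_{i})=1$, so setting $\lambda_{i}=w_{i}f(x_{i})$ gives exactly the constraints $0\le\lambda_{i}\le w_{i}$ and $\sum\lambda_{i}=1$, while the point produced is $\sum\lambda_{i}x_{i}$.

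With this in hand, I would let $y\in\met{\mu}$ and fix coefficients $\lambda_{1},\dots,\lambda_{m}$ as above. For the first containment, since $\lambda_{i}\ge0$ and $\sum\lambda_{i}=1$, the point $y=\sum\lambda_{i}x_{i}$ lies in $\conv{x_{1},\dots,x_{m}}$ by definition. For the second containment, set $\alpha_{i}=\lambda_{i}/w_{i}$; then $\alpha_{i}\in[0,1]$ because $0\le\lambda_{i}\le w_{i}$, and
\[
y=\sum_{i=1}^{m}\lambda_{i}x_{i}=\sum_{i=1}^{m}\alpha_{i}(w_{i}x_{i})\in Z(w_{1}x_{1},\dots,w_{m}x_{m}).
\]

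There is no real obstacle here: the proposition is essentially a dictionary between the two natural rescalings of the coefficient vector $(\lambda_{i})$, one normalized so that the coefficients sum to $1$ (yielding the convex hull) and the other normalized so that each coefficient lies in $[0,1]$ (yielding the zonotope). The only small matter to be careful about is the case $w_{i}=0$, which is automatically excluded by the assumption $w_{i}>0$, so the rescaling $\alpha_{i}=\lambda_{i}/w_{i}$ is always well-defined.
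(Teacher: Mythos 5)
Your proof is correct and follows essentially the same route as the paper: both arguments parametrize $\met{\mu}$ by a coefficient vector subject to the two constraints $0\le\lambda_{i}\le w_{i}$ and $\sum\lambda_{i}=1$, and obtain each containment by retaining one constraint and relaxing the other (the paper writes the coefficients as $\lambda_{i}w_{i}$ with $\lambda_{i}\in[0,1]$, which is just your rescaling $\alpha_{i}=\lambda_{i}/w_{i}$). No gaps.
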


\begin{proof}
Recall that $\met{\mu}=\left\{ \sum_{i=1}^{m}\lambda_{i}w_{i}x_{i}\,:\,0\le\lambda_{i}\le1\:,\:\sum_{i=1}^{m}\lambda_{i}w_{i}=1\right\} $.
Then, on the one hand, we may relax the first constraint and obtain
that 
\[
\met{\mu}\sub P:=\left\{ \sum_{i=1}^{m}\lambda_{i}w_{i}x_{i}\,:\:\lambda_{i}\ge0,\,\sum_{i=1}^{m}\lambda_{i}w_{i}=1\right\} =\conv{x_{1},\dots,x_{m}}.
\]
On the other hand, we may remove the second constraint and obtain
that 
\[
\met{\mu}\sub Z:=Z\left(w_{1}x_{1},\dots w_{m}x_{m}\right)=\left\{ \sum_{i=1}^{m}\lambda_{i}w_{i}x_{i}\,:\,0\le\lambda_{i}\le1\right\} .
\]
Therefore, we clearly have that $\met{\mu}\sub P\cap Z$. 
\end{proof}
A picture demonstrating Proposition \ref{prop:Z_cap_P} is given in
Figure \ref{fig:discrete_met} below in the particular case where
$\mu=\delta_{0}+\sum_{i=1}^{2}\frac{1}{k}\left(\delta_{e_{i}}+\delta_{-e_{i}}\right)$. 

\begin{figure}[H]
\begin{centering}
\includegraphics[scale=0.6]{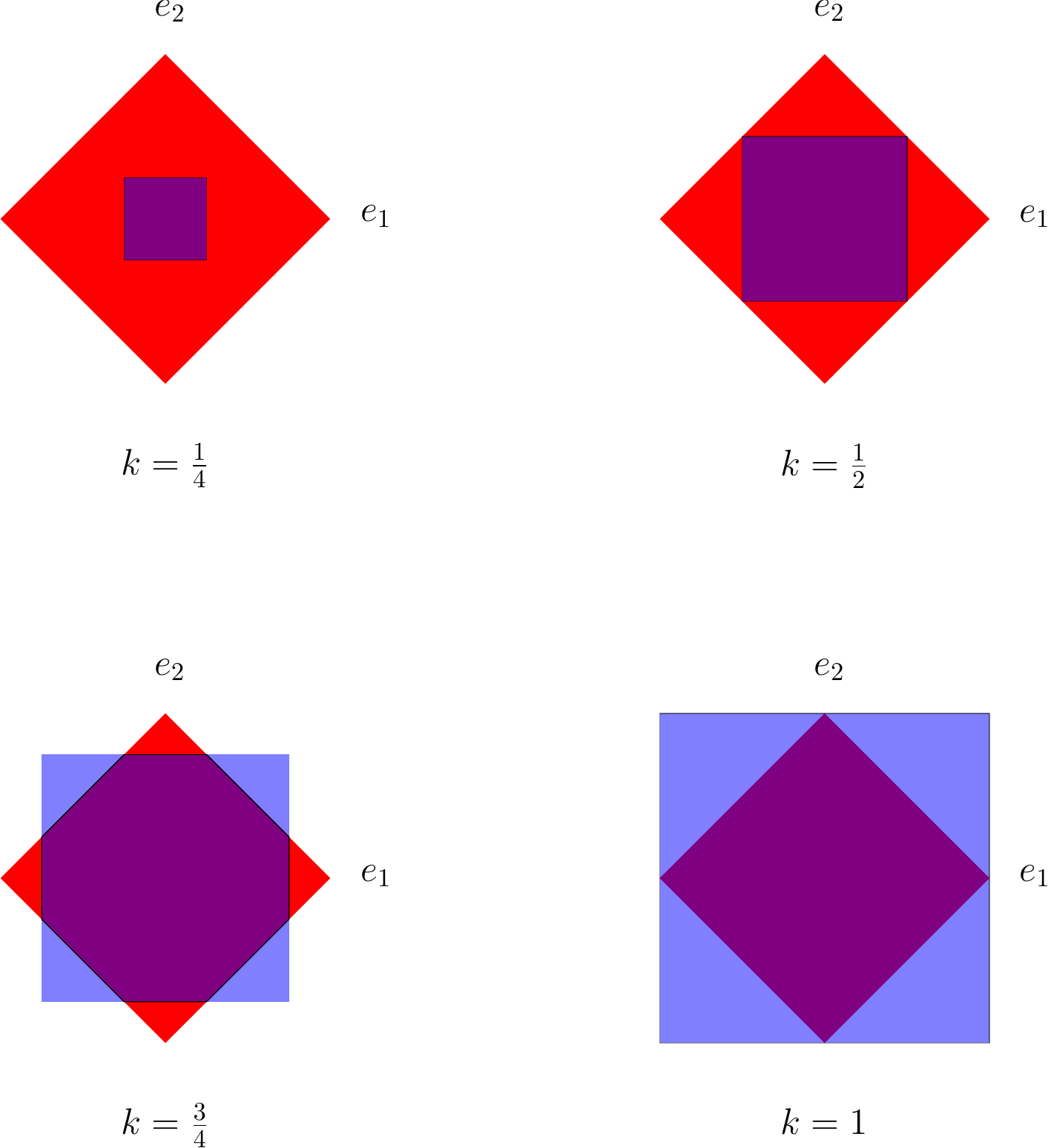}
\par\end{centering}
\caption{\label{fig:discrete_met}The metronoid generated by $\mu=\delta_{0}+\sum_{i=1}^{2}\frac{1}{k}\left(\delta_{e_{i}}+\delta_{-e_{i}}\right)$
for different values of $k$. Here $\protect\conv{\pm\frac{e_{1}}{k},\pm\frac{e_{2}}{k}}$
is marked in red, $Z\left(\pm\frac{e_{1}}{k},\pm\frac{e_{2}}{k}\right)$
in blue, and $\protect\met{\mu}$ in purple. }
\end{figure}

We remark that in Figure \ref{fig:discrete_met}, we have that $\met{\mu}=Z\left(\mu\right)\cap P\left(\mu\right)$
for all values of $k$. However, this is not always the case. For
example, consider $\mu=\sum_{i=1}^{2}\frac{1}{4}\left(\delta_{e_{i}}+\delta_{e_{-i}}\right)$
on $\R^{2}$. Then $\mu\left(\R^{2}\right)=1$, and hence $\met{\mu}=\left\{ 0\right\} \neq Z\left(\mu\right)\cap P\left(\mu\right)$.

\subsubsection{\textbf{Zonoid generating measures }}

Proposition \ref{prop:Z_cap_P} can be stated in a more general case.
Given a Borel measure $\mu$ on $\R^{n}$, define 
\[
Z\left(\mu\right)=\left\{ \int_{\R^{n}}xf\left(x\right)\d{\mu}\left(x\right)\,:\,0\le f\le1\right\} ,\,P\left(\mu\right)=\left\{ \int_{\R^{n}}xf\left(x\right)\d{\mu}\left(x\right)\,:\,0\le f,\,\int_{\R^{n}}f\d{\mu}=1\right\} .
\]
Then, the same argument verbatim as in the proof of Proposition \ref{prop:mu_Z_cap_p}
yields: 
\begin{prop}
\label{prop:mu_Z_cap_p}We have that $\met{\mu}\sub Z\left(\mu\right)\cap P\left(\mu\right)$. 
\end{prop}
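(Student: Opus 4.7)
The plan is to mimic the proof of Proposition \ref{prop:Z_cap_P} verbatim at the level of general measures, by exploiting the fact that the defining conditions for $\met{\mu}$ are precisely the conjunction of the defining conditions of $Z(\mu)$ and $P(\mu)$. Concretely, if $y \in \met{\mu}$, then there exists a $\mu$-measurable $f : \R^n \to [0,1]$ with $\int f \d{\mu} = 1$ such that $y = \int x f(x) \d{\mu}(x)$. Since this $f$ in particular satisfies $0 \le f \le 1$ without regard to the normalization, $y \in Z(\mu)$; and since the same $f$ satisfies $f \ge 0$ together with $\int f \d{\mu} = 1$ without regard to the upper bound $f \le 1$, we also have $y \in P(\mu)$.

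Hence $y \in Z(\mu) \cap P(\mu)$, and the inclusion $\met{\mu} \sub Z(\mu) \cap P(\mu)$ follows. No approximation or measure-theoretic regularity is needed beyond the measurability of $f$ itself, which is built into the definition of $\met{\mu}$.

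There is essentially no obstacle: the only thing to notice is that the two constraints in the definition of $\met{\mu}$ decouple cleanly, each corresponding to one of the two bodies $Z(\mu)$ and $P(\mu)$. I would present the argument in two short lines, explicitly pointing out that this is the general-measure analogue of the discrete case already proved, and that equality need not hold, as illustrated by the example following Proposition \ref{prop:Z_cap_P} where $\mu(\R^n) = 1$ forces $\met{\mu}$ to be a single point while $Z(\mu) \cap P(\mu)$ can be strictly larger.
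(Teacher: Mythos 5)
Your proof is correct and is exactly the argument the paper intends: it explicitly says the proposition follows by repeating the discrete-case argument (dropping the upper bound $f\le 1$ to land in $P(\mu)$, dropping the normalization $\int f\,{\rm d}\mu=1$ to land in $Z(\mu)$) verbatim for general measures. Nothing is missing.
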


\begin{rem}
\label{rem:fvein_zonotope}To complement Proposition \ref{prop:mu_Z_cap_p},
let $\mu$ be a finite Borel measure satisfying that $\mu\left(\R^{n}\right)\le2$
and $\mu\left(\left\{ 0\right\} \right)\ge1$. We claim that in this
case $\met{\mu}=Z\left(\mu\right)$. Indeed, note that for any function
$0\le f\le1$, $\int_{\R^{n}}f\left(x\right)\d{\mu}\left(x\right)\le\mu\left(\left\{ 0\right\} \right)f\left(0\right)+1$.
Hence, by changing the value of $f\left(0\right)$ (which does not
affect \textbf{$\int_{\R^{n}}xf\left(x\right)\d{\mu}\left(x\right)$),}
we may assume that $\int_{\R^{n}}f\left(x\right)\d{\mu}\left(x\right)=1$.
Therefore, it follows that, under these assumptions, $\met{\mu}=Z\left(\mu\right)$.
This fact is also demonstrated in Figure \ref{fig:discrete_met} above,
for $\mu=\delta_{0}+\sum_{i=1}^{2}\frac{1}{k}\left(\delta_{e_{i}}+\delta_{-e_{i}}\right)$
and $\frac{1}{4}\le k\le\frac{1}{2}$. 
\end{rem}

The next proposition shows that by adding symmetricity to the measures
described in Remark \ref{rem:fvein_zonotope}, the generated metronoids
become zonoids:
\begin{prop}
\label{prop:MuCentroid-1} Suppose $\mu$ is a symmetric Borel measure
satisfying that $\mu(\R^{n})\le2$ , and $\mu\left(\left\{ 0\right\} \right)\ge1$.
Then
\[
h_{\met{\mu}}(\theta)=\frac{1}{2}\int_{\R^{n}}\left|\iprod{\theta}x\right|\d{\mu}\left(x\right).
\]
\end{prop}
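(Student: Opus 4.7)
The plan is to reduce the support function computation to a one-line optimization by invoking Remark \ref{rem:fvein_zonotope}, and then to exploit the symmetry of $\mu$ to rewrite the resulting integral in the desired form.

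First, I would apply Remark \ref{rem:fvein_zonotope}: since $\mu(\R^{n})\le 2$ and $\mu(\{0\})\ge 1$, we have the identity $\met{\mu}=Z(\mu)$. Thus it suffices to compute $h_{Z(\mu)}(\theta)$. By the definition of $Z(\mu)$, for any $\theta\in\Sph^{n-1}$,
\[
h_{Z(\mu)}(\theta)=\sup_{0\le f\le 1}\int_{\R^{n}}\iprod{x}{\theta}f(x)\d{\mu}(x).
\]
The integrand $\iprod{x}{\theta}f(x)$ is pointwise maximized over $0\le f(x)\le 1$ by choosing $f(x)=1$ when $\iprod{x}{\theta}>0$ and $f(x)=0$ when $\iprod{x}{\theta}<0$ (the value on the hyperplane $\{\iprod{x}{\theta}=0\}$ being irrelevant), which gives
\[
h_{Z(\mu)}(\theta)=\int_{\{\iprod{x}{\theta}>0\}}\iprod{x}{\theta}\d{\mu}(x).
\]

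Next I would invoke the symmetry hypothesis $\mu(-A)=\mu(A)$ for every Borel set $A$. Under the change of variables $x\mapsto -x$,
\[
\int_{\{\iprod{x}{\theta}>0\}}\iprod{x}{\theta}\d{\mu}(x)=\int_{\{\iprod{x}{\theta}<0\}}(-\iprod{x}{\theta})\d{\mu}(x),
\]
so each of these two integrals equals $\frac{1}{2}\int_{\R^{n}}|\iprod{x}{\theta}|\d{\mu}(x)$ (the set $\{\iprod{x}{\theta}=0\}$ contributes zero to $\int|\iprod{x}{\theta}|\d\mu$). This yields the claimed formula.

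There is essentially no obstacle here; the only subtlety worth flagging is that the reduction $\met{\mu}=Z(\mu)$ from Remark \ref{rem:fvein_zonotope} is what allows one to drop the normalization constraint $\int f\d{\mu}=1$ from the definition of the metronoid, since otherwise the pointwise optimizer $f=\one_{\{\iprod{x}{\theta}>0\}}$ need not be admissible. Once this is in hand, the remainder is the standard support-function computation for zonoids.
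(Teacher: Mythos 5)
Your proof is correct. It takes a slightly different route from the paper's: the paper invokes its general support-function formula for metronoids (Proposition \ref{ExtremePointofMu}), noting that the symmetry of $\mu$ together with $\mu(\R^{n})\le2$ and $\mu\left(\left\{ 0\right\} \right)\ge1$ forces the cutoff level $R(\theta)$ of \eqref{eq:R_theta} to be $0$, so that the extremal function $f_{\theta}$ reduces (up to the null contribution of the hyperplane $\left\{ \iprod x{\theta}=0\right\} $) to the indicator of the open half-space $\left\{ \iprod x{\theta}>0\right\} $. You instead invoke Remark \ref{rem:fvein_zonotope} to identify $\met{\mu}$ with $Z(\mu)$ and then carry out the unconstrained pointwise optimization over $0\le f\le1$ directly. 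The two arguments are close cousins: both rest on the observation that the normalization constraint $\int f\d{\mu}=1$ is slack under the stated mass conditions, and both end with the same symmetry identity $\int_{\left\{ \iprod x{\theta}>0\right\} }\iprod x{\theta}\d{\mu}=\frac{1}{2}\int_{\R^{n}}\left|\iprod x{\theta}\right|\d{\mu}$. Yours has the advantage of bypassing the case analysis in the definition of $f_{\theta}$, at the cost of relying on the (easily verified) Remark \ref{rem:fvein_zonotope}; you correctly flag that this remark is precisely what licenses dropping the constraint, which is the only subtle point.
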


\begin{proof}
Fix $\theta\in\Sph^{n-1}.$ Recall the definition of $R_{\theta}$
and $f_{\theta}$ in \eqref{eq:R_theta}, and \eqref{eq:f_theta}.
Observe that since $\mu$ is symmetric, $\mu\left(\R^{n}\right)\le2$,
and $\mu\left(\left\{ 0\right\} \right)\ge1$, it follows that $R_{\theta}=0$.
Therefore, Proposition \ref{ExtremePointofMu} implies that 
\[
h_{\met{\mu}}\left(\theta\right)=\int_{\iprod x{\theta}\ge0}f_{\theta}\left(\iprod x{\theta}\right)\iprod x{\theta}\d{\mu}\left(x\right)=\int_{\iprod x{\theta}>0}\iprod x{\theta}\d{\mu}\left(x\right)=\frac{1}{2}\int_{\R^{n}}\left|\iprod x{\theta}\right|\d{\mu}\left(x\right).
\]
\end{proof}

\subsubsection{\textbf{Uniform measures on convex bodies}}

Let $K\sub\R^{n}$ be a convex body, and fix $0<\delta<\vol K$. Let
$\mu_{\delta}$ be the uniform measure on $K$, defined by $\d{\mu_{\delta}=\delta^{-1}\one_{K}\d x}$.
Then, for any direction $\theta\in\Sph^{n-1}$, Proposition \ref{ExtremePointofMu}
tells us that $h_{\met{\mu_{\delta}}}\left(\theta\right)=\iprod{y_{\theta}}{\theta}$
where 
\[
y_{\theta}=\frac{1}{\delta}\int_{\R^{n}}x\one_{\left\{ y\in K\,:\,\iprod y{\theta}\ge R\left(\theta\right)\right\} }\left(x\right)\d x
\]
and $R\left(\theta\right)$ is the real number satisfying that $\vol{\left\{ x\in K\,:\,\iprod x{\theta}\ge R\left(\theta\right)\}\right\} }=\delta.$ 

The body $\met{\mu}$ is related to the floating body $K_{\delta}=\bigcap_{\theta\in\Sph^{n-1}}\left\{ x\in\R^{n}\,:\,\iprod x{\theta}\le R\left(\theta\right)\right\} $
in the following sense: the boundary points of $\met{\mu}$ are the
centers of mass of the caps $\left\{ x\in K:\,\iprod x{\theta}\ge R\left(\theta\right)\right\} $
which are cut off in order to obtain $K_{\delta}$ (see \cite{SW90}
for more about floating bodies). In fact, one can show that $K_{\delta}\sub\met{\mu_{\delta}}\sub K_{\frac{\delta}{e}}$. 

\subsection{Some linear-invariance properties}

In this section we state a few basic facts concerning the behavior
of metronoids under linear transformations, and the invariance of
the quantities $d_{R}^{*}\left(K\right)$, $D_{R}^{*}\left(K\right)$,
and $\fvein K$. 

Let $T\in{\rm GL}_{n}\left(\R\right)$ be an invertible linear transformation
on $\R^{n}$. Given a Borel measure $\mu$ on $\R^{n}$, denote by
$\nu=T\#\mu$ the pushforward of $\mu$ by $T$, that is $\nu\left(A\right)=\mu\left(T^{-1}A\right)$
for any Borel set $A\sub\R^{n}$. Then we have:
\begin{fact}
\label{fact:met_lin_inv} Let $\mu$ be a Borel measure on $\R^{n}$,
$T\in{\rm GL}_{n}\left(\R\right)$, and denote $\nu=T\#\mu$. Then
$\met{\nu}=T\met{\mu}$. Moreover, for any convex body $K\sub\R^{n}$
containing the origin in its interior, we have that $\int_{\R^{n}}\norm x_{K}\d{\mu}\left(x\right)=\int_{\R^{n}}\norm x_{TK}\d{\nu}\left(x\right)$. 
\end{fact}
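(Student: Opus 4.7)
The plan is to prove both claims via the standard change-of-variables formula for pushforward measures, namely that for any Borel measurable $g \ge 0$, $\int_{\R^n} g(y) \d{\nu}(y) = \int_{\R^n} g(Tx) \d{\mu}(x)$, and then to exploit the linearity of the integral together with the identity $\norm{Tx}_{TK} = \norm{x}_K$ (which is immediate from the definition of the gauge function, since $Tx \in r \cdot TK$ if and only if $x \in rK$).

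For the first claim, I would prove $T\met{\mu} \sub \met{\nu}$ by showing that if $x \in \met{\mu}$ is witnessed by some admissible $f:\R^n \to [0,1]$ with $\int f \d{\mu} = 1$ and $x = \int y f(y) \d{\mu}(y)$, then $g := f \circ T^{-1}$ is an admissible function for $\nu$ and witnesses $Tx \in \met{\nu}$. Indeed, $0 \le g \le 1$; by pushforward, $\int g \d{\nu} = \int f(T^{-1}(Tx)) \d{\mu}(x) = \int f \d{\mu} = 1$; and
\[
\int_{\R^n} z\, g(z) \d{\nu}(z) = \int_{\R^n} (Ty)\, f(y) \d{\mu}(y) = T \int_{\R^n} y\, f(y) \d{\mu}(y) = Tx.
\]
The reverse inclusion $\met{\nu} \sub T\met{\mu}$ follows by an identical argument applied to $T^{-1}$, using that $\mu = T^{-1}\#\nu$ and that $f := g \circ T$ is an admissible function for $\mu$ whenever $g$ is admissible for $\nu$.

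For the second claim, applying the pushforward formula with $g(x) = \norm{x}_{TK}$ gives
\[
\int_{\R^n} \norm{x}_{TK} \d{\nu}(x) = \int_{\R^n} \norm{Ty}_{TK} \d{\mu}(y) = \int_{\R^n} \norm{y}_{K} \d{\mu}(y),
\]
where the last equality uses the linear invariance $\norm{Ty}_{TK} = \norm{y}_K$.

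There is no real obstacle here: both parts are direct consequences of the change-of-variables formula for pushforward measures combined with the linearity of $T$ and the linear invariance of the gauge function. The only minor point to watch is measurability (ensuring that $f \circ T^{-1}$ is $\nu$-measurable when $f$ is $\mu$-measurable), which is automatic since $T$ is a homeomorphism of $\R^n$.
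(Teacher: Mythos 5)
Your proposal is correct and follows essentially the same route as the paper: both inclusions are obtained by transporting the witnessing function via $f\mapsto f\circ T^{-1}$ (resp.\ $g\mapsto g\circ T$) together with the change-of-variables formula for the pushforward, and the cost identity follows from the same formula combined with $\norm{Ty}_{TK}=\norm{y}_{K}$. Your added remark on measurability of $f\circ T^{-1}$ is a reasonable (if minor) point that the paper leaves implicit.
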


\begin{proof}
Let $x\in\met{\mu}$. Then $x=\int_{\R^{n}}yf\left(y\right)\d{\mu}\left(y\right)$
for some $0\le f\le1$ with $\int_{\R^{n}}f\d{\mu}=1$, and hence
\[
Tx=\int_{\R^{n}}Tyf\left(y\right)\d{\mu}\left(y\right){\it =\int_{\R^{n}}yf\left(T^{-1}y\right)\d{\nu}\left(y\right)}\in\met{\nu}.
\]
Similarly, if $z\in\met{\nu}$ then $z=\int_{\R^{n}}yg\left(y\right)\d{\nu}\left(y\right)$
for some $0\le g\le1$ with $\int_{\R^{n}}g\d{\nu}=1$, and hence
\[
z=\int_{\R^{n}}yg\left(y\right)\d{\nu}\left(y\right){\it =\int_{\R^{n}}Tyg\left(Ty\right)\d{\mu}\left(y\right)}\in T\met{\mu}.
\]
Let $K\sub\R^{n}$ be a convex body containing the origin in its interior.
Then 
\[
\int_{\R^{n}}\norm x_{TK}\d{\nu}\left(x\right)=\int_{\R^{n}}\norm{Tx}_{TK}\d{\mu}\left(x\right)=\int_{\R^{n}}\norm x_{K}\d{\mu}\left(x\right).
\]
\end{proof}
\begin{fact}
\label{fact:Linear_invariant}Let $K$ be a convex body in $\R^{n}$,
$T\in{\rm GL}_{n}\left(\R\right)$, and $R\ge1$. Then $d_{R}^{*}\left(K\right)=d_{R}^{*}\left(TK\right)$,
$D_{R}^{*}\left(K\right)=D_{R}^{*}\left(TK\right)$, and $\fvein K=\fvein{TK}$. 
\end{fact}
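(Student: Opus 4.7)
The plan is to deduce Fact \ref{fact:Linear_invariant} directly from Fact \ref{fact:met_lin_inv}, by showing that the pushforward operation $\mu \mapsto T\#\mu$ sets up a bijection between measures admissible for $K$ and those admissible for $TK$, while preserving the relevant cost functionals.

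First I would handle $d_R^*(K) = d_R^*(TK)$. Given any Borel measure $\mu$ satisfying the constraint $\tfrac{1}{R}\met{\mu} \subseteq K \subseteq \met{\mu}$, I would apply $T$ to both inclusions to obtain $\tfrac{1}{R}\, T\met{\mu} \subseteq TK \subseteq T\met{\mu}$, and then invoke Fact \ref{fact:met_lin_inv} to rewrite this as $\tfrac{1}{R}\met{T\#\mu} \subseteq TK \subseteq \met{T\#\mu}$. Since pushforward preserves total mass, $(T\#\mu)(\R^n) = \mu(\R^n)$, so $\nu = T\#\mu$ is admissible for $TK$ at the same cost. This gives $d_R^*(TK) \le d_R^*(K)$, and the reverse inequality follows by applying the same argument to $T^{-1}$ and $TK$.

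Next, for $D_R^*(K) = D_R^*(TK)$, I would use the same correspondence $\mu \leftrightarrow \nu = T\#\mu$. The containment constraints transform as above, while the second assertion of Fact \ref{fact:met_lin_inv} gives
\[
\int_{\R^n} \norm{x}_K \d{\mu}(x) = \int_{\R^n} \norm{x}_{TK} \d{\nu}(x),
\]
so the infimum defining $D_R^*$ is the same whether taken over measures admissible for $K$ or for $TK$. The case of $\fvein{K}$ is essentially identical: the constraint is only the one-sided inclusion $K \subseteq \met{\mu}$, which transforms to $TK \subseteq \met{T\#\mu}$, and the cost $\int \norm{x}_K \d{\mu}$ is preserved under the same change of variable.

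There is no real obstacle here — the proof is purely a bookkeeping exercise that repackages Fact \ref{fact:met_lin_inv} twice (once for the geometric constraint, once for the cost) and uses that $\mu \mapsto T\#\mu$ is a bijection on Borel measures with inverse $\nu \mapsto T^{-1}\#\nu$. The only minor point worth noting is that one must check both directions of each inequality, but this is automatic from the invertibility of $T$.
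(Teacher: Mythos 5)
Your proof is correct and takes essentially the same route as the paper: both arguments push $\mu$ forward to $\nu = T\#\mu$, invoke Fact \ref{fact:met_lin_inv} to transform the containment constraints and preserve the cost ($\nu(\R^n)=\mu(\R^n)$ for $d_R^*$, the gauge integral identity for $D_R^*$ and $\fvein{\cdot}$), and conclude by the invertibility of $T$. Your explicit mention of checking both directions of each inequality is a small point of added care over the paper's write-up, but the substance is identical.
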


\begin{proof}
Let $\mu$ be a measure such that $K\sub\met{\mu}\sub RK$, and let
$T\in{\rm GL}_{n}\left(\R\right)$. Then by considering the pushforward
measure $\nu=T\#\mu$. By Fact \ref{fact:met_lin_inv}, we have that
$\met{\nu}=T\met{\nu}$, and hence $TK\sub\met{\nu}\sub R\left(TK\right)$.
Moreover, we clearly have that $\nu\left(\R^{n}\right)=\mu\left(\R^{n}\right)$,
from which it follows that $d_{R}^{*}\left(K\right)=d_{R}^{*}\left(TK\right)$.
Finally, note that Fact \ref{fact:met_lin_inv} also implies that
$\int_{\R^{n}}\norm x_{TK}\d{\nu}\left(x\right)=\int_{\R^{n}}\norm x_{K}\d{\mu}\left(x\right)$,
which means that $D_{R}^{*}\left(K\right)=D_{R}^{*}\left(TK\right)$,
as required.
\end{proof}

\subsection{Approximations by discrete measures}

In this section we show that, for the purpose of approximating a
convex body $K\sub\R^{n}$, one can often replace a general Borel
measure $\mu$ by a finite discrete measure, without increasing the
cost $\int_{\R^{n}}\norm x_{K}\d{\mu}\left(x\right)$. 

We begin with the reduction of infinite measures to finite measures: 
\begin{lem}
Let $K\sub\R^{n}$ \label{lem:approx_infinite} be a convex body
containing $0$ in its interior, and $\mu$ be an infinite Borel measure
such that $K\sub\met{\mu}$, and $\int_{\R^{n}}\norm x_{K}\d{\mu}\left(x\right)<\infty$.
Then for any $\eps>0$, there exists a finite Borel measure $\nu$
such that $\met{\mu}\sub\met{\nu}\sub\left(1+\eps\right)\met{\mu}$
and, in particular, $K\sub\met{\nu}$. Furthermore, we also have that
$\int_{\R^{n}}\norm x_{K}\d{\nu}\left(x\right)\le(1+\eps)\int_{\R^{n}}\norm x_{K}\d{\mu}\left(x\right)$. 
\end{lem}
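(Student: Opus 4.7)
\textit{Plan of proof.} The idea is to truncate $\mu$ outside a small $\norm{\cdot}_K$-neighborhood of the origin---where $\mu$ must carry its infinite mass---and to compensate with a small finite measure.

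For $\eta>0$ small (to be chosen in terms of $\eps$), let $A_\eta:=\{x\in\R^n:\norm x_K<\eta\}$. By Markov's inequality, $\mu(A_\eta^c)\le\eta^{-1}\int_{\R^n}\norm x_K\d{\mu}<\infty$, so $\mu|_{A_\eta^c}$ is finite; since $\mu(\R^n)=\infty$, necessarily $\mu(A_\eta)=\infty$. I will set $\nu:=\mu|_{A_\eta^c}+\sigma$, where $\sigma$ is a finite atomic measure chosen so that $\met{\sigma}\supseteq\eta K$ and $\int_{\R^n}\norm x_K\d{\sigma}=O(\eta)$. To construct such $\sigma$, take any $\sigma_0=\sum_i w_i\delta_{v_i}$ with $K\subseteq\met{\sigma_0}$ (e.g.\ supported on the vertices of a simplex containing $K$) and set $\sigma=(\eta I)\#\sigma_0$; by Fact~\ref{fact:met_lin_inv} one has $\met{\sigma}=\eta\met{\sigma_0}\supseteq\eta K$ and $\int_{\R^n}\norm x_K\d{\sigma}=\eta\int_{\R^n}\norm x_K\d{\sigma_0}$. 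A slight perturbation of the finite set of vertices ensures $\supp{\sigma}$ avoids the countably many atoms of $\mu$. Then $\int_{\R^n}\norm x_K\d{\nu}\le\int_{\R^n}\norm x_K\d{\mu}+O(\eta)\le(1+\eps)\int_{\R^n}\norm x_K\d{\mu}$ for $\eta$ small enough.

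For $\met{\mu}\subseteq\met{\nu}$: given $x=\int yf\d{\mu}$ with $f\in[0,1]$ and $\int f\d{\mu}=1$, decompose $x=x_1+x_2$ with $x_1=\int_{A_\eta^c}yf\d{\mu}$, $x_2=\int_{A_\eta}yf\d{\mu}$, and put $\beta=\int_{A_\eta}f\d{\mu}\in[0,1]$. Then $\norm{x_2}_K\le\beta\eta$, so (when $\beta>0$) $x_2/\beta\in\eta K\subseteq\met{\sigma}$; scaling any metronoid representation of $x_2/\beta$ produces $g':\supp{\sigma}\to[0,1]$ with $\int g'\d{\sigma}=\beta$ and $\int yg'\d{\sigma}=x_2$. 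Because $\mu$ has no atoms in $\supp{\sigma}$, the function $g$ defined by $g=f$ off $\supp{\sigma}$ and $g=g'$ on $\supp{\sigma}$ satisfies $\int g\d{\nu}=(1-\beta)+\beta=1$ and $\int yg\d{\nu}=x_1+x_2=x$, so $x\in\met{\nu}$.

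The main obstacle is the reverse inclusion $\met{\nu}\subseteq(1+\eps)\met{\mu}$. For $z=\int yg\d{\nu}\in\met{\nu}$ with $\int g\d{\nu}=1$, set $\alpha'=\int_{A_\eta^c}g\d{\mu}$ and $\beta'=\int g\d{\sigma}=1-\alpha'$, and split $z=z_1+z_2$ accordingly. I aim to produce $F:\R^n\to[0,1]$ with $\int F\d{\mu}=1$ and $\int yF\d{\mu}=z/(1+\eps)$ by setting $F=g/(1+\eps)$ on $A_\eta^c$ and constructing $F$ on $A_\eta$ to realize the residual mass $m=1-\alpha'/(1+\eps)\in[\eps/(1+\eps),1]$ together with residual moment $w=z_2/(1+\eps)$ of $\norm{\cdot}_K$-length $O(\eta)$. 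The technical heart of the argument is to realize the pair $(m,w)$ inside the mass-moment zonoid $\{(\int h\d{\mu|_{A_\eta}},\int yh\d{\mu|_{A_\eta}}):0\le h\le 1\}$: since $\mu(A_\eta)=\infty$ and $\norm w_K$ is small of order $\eta$ compared to $m\ge\eps/(1+\eps)$, a Lyapunov-type convexity argument---using the hypothesis $K\subseteq\met{\mu}$ to guarantee adequate directional spread of $\mu|_{A_\eta}$---produces the required piece of $F$. Choosing $\eta$ sufficiently small in terms of $\eps$, $K$, and $\int_{\R^n}\norm x_K\d{\mu}$ concludes the proof.
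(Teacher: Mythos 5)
Your construction of $\nu=\mu|_{A_\eta^c}+\sigma$, the cost estimate, and the forward inclusion $\met{\mu}\sub\met{\nu}$ are all fine (and in fact give the exact containment $\met{\mu}\sub\met{\nu}$ without the rescaling step the paper needs). The gap is in the reverse inclusion, precisely at the step you call the technical heart. You want to realize the residual pair $(m,w)$, with $m\ge\eps/(1+\eps)$ and $\norm w_{K}=O(\eta)$, inside the mass--moment set $\left\{ \left(\int h\d{\mu|_{A_\eta}},\int yh\d{\mu|_{A_\eta}}\right):0\le h\le1\right\}$, and you claim the hypothesis $K\sub\met{\mu}$ forces enough directional spread of $\mu|_{A_\eta}$ for this. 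That claim is false: $K\sub\met{\mu}$ constrains $\mu$ globally but says nothing about the directional distribution of $\mu$ near the origin. Take $\mu=\sum_{k\ge1}\delta_{2^{-k}u}+\rho$ for a fixed $u$ and a finite measure $\rho$ with $K\sub\met{\rho}$; then $\mu(\R^{n})=\infty$, $\int\norm x_{K}\d{\mu}<\infty$, and $K\sub\met{\mu}$, yet for every $\eta$ the moment component $\int yh\d{\mu|_{A_\eta}}$ lies on the ray $\R_{\ge0}u$, so no Lyapunov-type argument can produce a generic residual moment $w$ (and $w=z_{2}/(1+\eps)$ with $z_{2}=\int yg\d{\sigma}$ is generic, since $\sigma$ is spread over simplex vertices). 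So the step as stated cannot be carried out.

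The argument is easily repaired, and the repair is essentially what the paper does. Do not insist on hitting the exact moment $w$: keep $F=g$ on $A_\eta^{c}$ (no division by $1+\eps$), place the residual mass $\beta'=1-\alpha'$ anywhere on $A_\eta$ (possible since $\mu(A_\eta)=\infty$), and accept that the resulting point $z'\in\met{\mu}$ satisfies only $\norm{z-z'}_{K}\le\norm{z_{2}}_{K}+\beta'\eta=O(\eta)$. This yields $\met{\nu}\sub\met{\mu}+O(\eta)K$, and since $K\sub\met{\mu}$ this gives $\met{\nu}\sub(1+O(\eta))\met{\mu}$, which is the desired multiplicative bound for $\eta$ small. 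The paper's proof follows exactly this Hausdorff-distance-then-absorb scheme (after normalizing so that $B_{2}^{n}\sub K\sub\met{\mu}$, using a polarity argument for the lower inclusion and a final dilation $\widetilde{\nu}=\frac{I_{n}}{1-\lambda}\#\nu$ to turn $(1-\lambda)\met{\mu}\sub\met{\nu}$ into $\met{\mu}\sub\met{\widetilde{\nu}}$); your use of $\sigma$ in place of the paper's $\delta_{0}$ is what lets you skip that last dilation, but the reverse inclusion must still go through the additive-error route rather than the exact-realization route you propose.
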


\begin{proof}
First, we show that we can reduce to the case where $\mu\left(\left\{ 0\right\} \right)<\infty$.
Indeed, suppose $\mu\left(\left\{ 0\right\} \right)=\infty$, and
define a measure $\nu$ by setting $\nu\left(A\right)=\mu\left(A\backslash\left\{ 0\right\} \right)$
for any measurable set $A$. Let $y\in\met{\mu},$ and $0\le f\le1$
be a function such that $\int_{\R^{n}}f\left(x\right)\d{\mu\left(x\right)}=1$
and $y=\int_{\R^{n}}xf\left(x\right)\d{\mu\left(x\right)}$. The conditions
$\mu\left(\left\{ 0\right\} \right)=\infty$ and $\int_{\R^{n}}f\left(x\right)\d{\mu\left(x\right)}=1$
force $f\left(0\right)=0$. Thus, $\int_{\R^{n}}f\left(x\right)\d{\nu\left(x\right)=1}$
and $y=\int_{\R^{n}}xf\left(x\right)\d{\nu\left(x\right)}$, which
implies that $y\in\met{\nu}$. On the other hand, let $y'\in\met{\nu}$,
and $0\le f\le1$ be a function satisfying that $\int_{\R^{n}}f\left(x\right)\d{\nu\left(x\right)}=1$
and $y'=\int_{\R^{n}}xf\left(x\right)\d{\nu\left(x\right)}$. Since
$\left\{ 0\right\} $ is not in the support of $\nu$, we may assume
without loss of generality that $f\left(0\right)=0$. Hence, $\int_{\R^{n}}f\left(x\right)\d{\mu\left(x\right)=1}$
and $y'=\int_{\R^{n}}xf\left(x\right)\d{\mu\left(x\right)}$, which
implies that $y'\in\met{\mu}$. Furthermore, we have that $\int_{\R^{n}}\norm x_{K}\d{\nu}\left(x\right)=\int_{\R^{n}}\norm x_{K}\d{\mu}\left(x\right)$.
Thus, from now on we may assume that $\mu\left(\left\{ 0\right\} \right)<\infty$. 

By Fact \ref{fact:met_lin_inv}, for any $T\in GL_{n}\left(\R\right)$,
we have that $K\sub\met{\mu}\iff TK\sub\met{T\#\mu}$, and $\int_{\R^{n}}\norm x{}_{K}\d{\mu}\left(x\right)=\int_{\R^{n}}\norm x{}_{TK}\d{\left(T\#\mu\right)}\left(x\right)$.
Therefore, we may assume without loss of generality that $B_{2}^{n}\sub K\sub\met{\mu}$.

Define the measure $\nu$ by: 

\[
\d{\nu}=\delta_{0}+1_{(\lambda B_{2}^{n})^{c}}\d{\mu},
\]
where $\lambda>0$ is a parameter that will be determined later. Since
$K\sub CB_{2}^{n}$ for some $C>0$, we have that 
\[
\mu\left(\left(\lambda B_{2}^{n}\right){}^{c}\right)=\int_{\left(\lambda B_{2}^{n}\right)^{c}}1\d{\mu}\le\frac{1}{\lambda}\int_{(\lambda B_{2}^{n})^{c}}\norm x_{B_{2}^{n}}\d{\mu}\left(x\right)\le\frac{C}{\lambda}\int_{\R^{n}}\norm x_{K}\d{\mu}\left(x\right)<\infty.
\]
Hence we have that $\nu$ is a finite measure, and $\mu$$\left(\lambda B_{2}^{n}\right)=\infty$.

Let $y\in\met{\mu}$. Then there exists a function $0\le f\le1$ such
that $\int_{\R^{n}}f\d{\mu}=1$ and $y=\int_{\R^{n}}xf\left(x\right)\d{\mu}\left(x\right)$.
Let $a=1-\int_{(\lambda B_{2}^{n})^{c}}f(x)\d{\mu}(x)$, and define
the function:
\[
g(x)=\begin{cases}
f(x), & x\in(\lambda B_{2}^{n})^{c}\\
a, & x=0\\
0, & {\rm otherwise}
\end{cases}.
\]
Then $0\le g\le1$ and $\int_{\R^{n}}g(x)\d{\nu}(x)=1$. Denoting
$y'=\int xg(x)\d{\nu}(x)$, we have that 

\begin{align*}
\norm{y-y'}_{B_{2}^{n}} & =\norm{\int_{\R^{n}}xf(x)\d{\mu}(x)-\int_{\R^{n}}xg(x)\d{\nu}(x)}_{B_{2}^{n}}\\
 & =\norm{\int_{\lambda B_{2}^{n}}xf(x)\d{\mu}(x)}_{B_{2}^{n}}\\
 & \le\lambda\int_{\lambda B_{2}^{n}}f(x)\d{\mu}(x)\\
 & \le\lambda.
\end{align*}
Similarly, for any $y'\in\met{\nu}$ there exists $y\in\met{\mu}$
such that $\|y-y'\|_{B_{2}^{n}}\le\lambda$. Indeed, let $0\le g\le1$
be a function such that $\int_{\R^{n}}g\left(x\right)\d{\nu\left(x\right)}=1$
and $y'=\int_{\R^{n}}xg\left(x\right)\d{\nu\left(x\right)}$. To define
a corresponding function $f\left(x\right)$, fix some $0<s<\lambda$
so that $1\le\mu\left(\left\{ x\,:\,s\le\norm x_{B_{2}^{n}}<\lambda\right\} \right)<\infty$.
The second inequality holds for any $s>0$. If there is no $s>0$
such that the first inequality is satisfied, then $\mu\left(\left\{ x\,:\,0<\norm x_{B_{2}^{n}}\le\lambda\right\} \right)\le1$,
which together with $\mu\left(\left\{ 0\right\} \right)<\infty$,
contradicts the fact that $\mu$$\left(\lambda B_{2}^{n}\right)=\infty$.
Define 
\[
f\left(x\right)=\begin{cases}
g\left(x\right), & x\in(\lambda B_{2}^{n})^{c}\\
\text{\ensuremath{\frac{1-\int_{(\lambda B_{2}^{n})^{c}}f(x)\d{\nu}(x)}{\mu\left(\left\{ x\,:\,s\le\norm x_{B_{2}^{n}}<\lambda\right\} \right)}} }, & s\le\norm x_{B_{2}^{n}}<\lambda\\
0, & {\rm otherwise}
\end{cases}.
\]
Since $0\le1-\int_{(\lambda B_{2}^{n})^{c}}g(x)\d{\nu}(x)\le1$, it
follows that $\text{0\ensuremath{\le\frac{1-\int_{(\lambda B_{2}^{n})^{c}}f(x)\d{\nu}(x)}{\mu\left(\left\{ x\,:\,s\le\norm x_{B_{2}^{n}}<\lambda\right\} \right)}} }\le1$,
and hence $0\le f\le1$. Moreover,
\begin{align*}
\int_{\R^{n}}f\left(x\right)\d{\mu\left(x\right)} & =\int_{(\lambda B_{2}^{n})^{c}}f(x)\d{\nu}(x)+\int_{\left\{ y\,:\,s\le\norm y_{B_{2}^{n}}<\lambda\right\} }\frac{1-\int_{(\lambda B_{2}^{n})^{c}}f(y)\d{\nu}(y)}{\mu\left(\left\{ y\,:\,s\le\norm y_{B_{2}^{n}}<\lambda\right\} \right)}\d{\mu\left(x\right)}=1.
\end{align*}
Denoting $y=\int_{\R^{n}}xf\left(x\right)\d{\mu\left(x\right)}\in\text{\ensuremath{\met{\mu}}}$,
it follows that 
\[
\norm{y-y'}_{B_{2}^{n}}=\norm{\int_{\R^{n}}xf(x)\d{\mu}(x)-\int_{\R^{n}}xg(x)\d{\nu}(x)}_{B_{2}^{n}}=\norm{\int_{\lambda B_{2}^{n}}xf(x)\d{\mu}(x)}_{B_{2}^{n}}\le\lambda.
\]
To show the inclusion $\left(1-\lambda\right)\met{\mu}\sub\met{\nu}$,
let 
\[
\met{\mu}^{\circ}=\left\{ x\in\R^{n}\,:\,\iprod xy\le1\,\forall\,y\in\met{\mu}\right\} 
\]
denote the polar body of $\met{\mu}$. By the properties of polarity,
for any $z\in\met{\mu}^{\circ}$, there exists $y\in M\left(\mu\right)$
such that $\iprod zy=1$. Moreover, by the previous argument, there
exists $y'\in\met{\nu}$ such that $\norm{y-y'}_{B_{2}^{n}}\le\lambda$.
Therefore, we have that 
\begin{align*}
\iprod z{y'} & =\iprod zy-\iprod z{y-y'}\\
 & \ge1-\norm z_{2}\norm{y-y'}_{2}\\
 & \ge1-\lambda,
\end{align*}
where the last inequality is due to the fact that $\norm z_{2}\le1$,
as $\met{\mu}^{\circ}\sub B_{2}^{n}$. Thus, it follows that $\left(1-\lambda\right)\met{\mu}\sub\met{\nu}$. 

For the opposite inclusion, we use the fact that for every $y'\in\met{\nu}$,
there exists $y\in\met{\mu}$ such that $\norm{y-y'}_{B_{2}^{n}}\le\lambda$.
Equivalently, $\met{\nu}\sub\met{\mu}+\lambda B_{2}^{n}$. Since $B_{2}^{n}\sub\met{\mu}$,
it follows that $\met{\nu}\sub\left(1+\lambda\right)\met{\mu}$, and
so 
\[
\left(1-\lambda\right)\met{\mu}\sub\met{\nu}\sub\left(1+\lambda\right)\met{\mu}.
\]
Moreover, by the definition of $\nu$ we have that 
\[
\int_{\R^{n}}\norm x{}_{(1-\lambda)K}\d{\nu}(x)\le\int_{\R^{n}}\norm x{}_{(1-\lambda)K}\d{\mu}(x)=\left(1-\lambda\right)^{-1}\int_{\R^{n}}\norm x_{K}\d{\mu}\left(x\right).
\]

Finally, consider the pushforward measure $\widetilde{\nu}=\frac{I_{n}}{1-\lambda}\#\nu$.
By Fact \ref{fact:met_lin_inv}, we have that 
\[
\met{\mu}\sub\met{\widetilde{\nu}}\sub\frac{1+\lambda}{1-\lambda}\met{\mu}
\]
and, in particular, $K\sub\met{\tilde{\nu}}$. Furthermore, we have
that 
\[
\int_{\R^{n}}\norm x_{K}\d{\widetilde{\nu}}\left(x\right)=\left(1-\lambda\right)^{-1}\int_{\R^{n}}\norm x{}_{K}\d{\nu}(x)\le\left(1-\lambda\right)^{-1}\int_{\R^{n}}\norm x_{K}\d{\mu}\left(x\right).
\]
By choosing a sufficiently small $\lambda$, the proof is complete. 
\end{proof}
The next lemma shows that any finite measure can be replaced with
a discrete one: 
\begin{lem}
\label{lem:appx_discrete}Let $K\sub\R^{n}$ be a convex body containing
$0$ in its interior, and $\mu$ be a finite Borel measure such that
$K\sub\met{\mu}$, and $\int_{\R^{n}}\norm x_{K}\d{\mu}\left(x\right)<\infty$.
Then for any $\eps>0$, there exists a finite discrete measure $\nu$
such that $\met{\mu}\sub\met{\nu}\sub\frac{1+2\eps}{1-2\eps}\met{\mu}$
and, in particular, $K\sub\met{\nu}$. Moreover, 
\[
\int_{\mathbb{R}^{n}}\norm x{}_{K}\d{\nu}(x)\le\frac{1}{1-2\eps}\int_{\mathbb{R}^{n}}\norm x{}_{K}\d{\mu}(x)+\frac{\eps}{1-2\eps}.
\]
\end{lem}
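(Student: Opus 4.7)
The plan is to reduce $\mu$ in two stages: first truncate so that its support outside the origin is bounded in $\norm\cdot_{K}$, then discretize by partitioning into small cells, and at each step chain the inclusions through the polar-duality trick used in the proof of Lemma~\ref{lem:approx_infinite}, now with $K$ in place of $B_{2}^{n}$ (the argument only needs $h_{K}\le h_{\met{\mu}}$, which follows from $K\sub\met{\mu}$).

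For the truncation, fix $\eps>0$ and choose $L$ with $\int_{\{\norm x_{K}>L\}}\norm x_{K}\d\mu\le\eps$. Set $c_{0}:=\mu(\{0\})+\mu(\{\norm x_{K}>L\})$ and $\mu':=\mu|_{\{0<\norm x_{K}\le L\}}+c_{0}\delta_{0}$. For any $y=\int xf\d\mu\in\met{\mu}$, the function $g$ on $\supp{\mu'}$ defined by $g=f$ on $\{0<\norm x_{K}\le L\}$ and $g(0)=c_{0}^{-1}\bigl(\mu(\{0\})f(0)+\int_{\{\norm x_{K}>L\}}f\d\mu\bigr)$ lies in $[0,1]$, satisfies $\int g\d{\mu'}=1$, and gives $y-\int xg\d{\mu'}=\int_{\{\norm x_{K}>L\}}xf\d\mu$, of $\norm\cdot_{K}$-norm at most $\eps$. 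Conversely, any $y'=\int xg\d{\mu'}\in\met{\mu'}$ can be pulled back to an element of $\met{\mu}$ within $\norm\cdot_{K}$-distance $\eps$ by redistributing the weight $g(0)c_{0}$ between $\mu|_{\{0\}}$ and $\mu|_{\{\norm x_{K}>L\}}$ (feasible since $g(0)c_{0}\le c_{0}$, with error controlled by $\int_{\{\norm x_{K}>L\}}\norm x_{K}\d\mu\le\eps$). The polar argument then yields $(1-\eps)\met{\mu}\sub\met{\mu'}\sub(1+\eps)\met{\mu}$.

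For the discretization, partition the bounded Borel set $\{0<\norm x_{K}\le L\}$ into finitely many Borel cells $A_{1},\ldots,A_{N}$ of $\norm\cdot_{K}$-diameter at most $\eps$, let $x_{i}:=\mu'(A_{i})^{-1}\int_{A_{i}}x\d{\mu'(x)}$ be the $\mu'$-barycenter of $A_{i}$, and set $\nu':=c_{0}\delta_{0}+\sum_{i:\mu'(A_{i})>0}\mu'(A_{i})\delta_{x_{i}}$. Choosing $x_{i}$ as the barycenter gives $\met{\nu'}\sub\met{\mu'}$ (a point $\sum_{i}\lambda_{i}x_{i}\in\met{\nu'}$ is represented in $\met{\mu'}$ by the step function taking the constant value $\lambda_{i}/\mu'(A_{i})$ on $A_{i}$, thanks to $\int_{A_{i}}x\d{\mu'}=\mu'(A_{i})x_{i}$). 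Conversely, for $y'=\int xg\d{\mu'}\in\met{\mu'}$, setting $\lambda_{i}:=\int_{A_{i}}g\d{\mu'}$, one computes
\[
\Bigl\Vert y'-\sum_{i}\lambda_{i}x_{i}\Bigr\Vert_{K}\le\sum_{i}\int_{A_{i}}\norm{x-x_{i}}_{K}g\d{\mu'}\le\eps\int g\d{\mu'}=\eps,
\]
so $\met{\mu'}\sub\met{\nu'}+\eps K$. Chaining with the truncation step and applying the polar argument once more produces $(1-2\eps)\met{\mu}\sub\met{\nu'}\sub(1+\eps)\met{\mu}$. Rescaling via $\nu:=\frac{1}{1-2\eps}\#\nu'$ (Fact~\ref{fact:met_lin_inv}) yields the required inclusion $\met{\mu}\sub\met{\nu}\sub\frac{1+2\eps}{1-2\eps}\met{\mu}$. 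For the cost, the triangle inequality $\norm{x_{i}}_{K}\le\mu'(A_{i})^{-1}\int_{A_{i}}\norm x_{K}\d{\mu'}$ together with the fact that the point mass at $0$ contributes nothing gives $\int\norm x_{K}\d{\nu'}\le\int\norm x_{K}\d{\mu'}\le\int\norm x_{K}\d\mu$, hence $\int\norm x_{K}\d\nu\le\frac{1}{1-2\eps}\int\norm x_{K}\d\mu$, which is in fact stronger than the stated bound. The only real technical point is the bookkeeping of the $\eps$-losses through the two reductions, together with verifying that each constructed function stays in $[0,1]$; everything else is routine.
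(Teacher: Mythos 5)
Your proof is correct and follows essentially the same route as the paper's: truncate the tail into an atom at the origin, partition the remaining bounded support into small cells replaced by point masses, control the two-sided error in a Hausdorff sense, convert to inclusions via the polar/support-function argument, and rescale. The two refinements you make — working with the gauge $\norm{\cdot}_{K}$ directly instead of first normalizing so that $B_{2}^{n}\sub K$, and placing each cell's atom at its $\mu$-barycenter rather than at a grid point — are sound and in fact yield the exact inclusion $\met{\nu'}\sub\met{\mu'}$ and a cost bound without the additive $\frac{\eps}{1-2\eps}$ term, slightly improving the stated estimate.
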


\begin{proof}
As in the proof of the previous lemma, we may assume without loss
of generality that $B_{2}^{n}\sub K$. Fix $\eps>0$, and fix some
large $R\in\mathbb{N}$ so that $\int_{\left(RB_{\infty}^{n}\right)^{c}}\norm x_{K}\d{\mu}\left(x\right)\le\eps$,
where $B_{\infty}^{n}$ denotes the $n$-cube $\left[-1,1\right]^{n}\sub\R^{n}$.
For any $m\in\mathbb{N}$, let $A_{m}\subset\mathbb{R}^{n}$ be the
collection of points 

\[
A_{m}:=\left\{ (a_{1},\cdots,a_{n})\,:\,\forall i\:a_{i}\in\left\{ 0,\pm\frac{1}{2^{m}},\pm\frac{2}{2^{m}}\dots,\pm R\right\} \right\} .
\]
For each $a\in A_{m}$, we define the box $B_{a}$ by
\[
B_{a}:=\left\{ (x_{1},\cdots,x_{n})\,:\,\forall i\,\,a_{i}-\frac{1}{2^{m+1}}\le x_{i}<a_{i}+\frac{1}{2^{m+1}}\right\} ,
\]
and observe that $\{B_{a}\}_{a\in A_{m}}$ is a partition of $E:=\left[-\left(R+\frac{1}{2^{m+1}}\right),\left(R+\frac{1}{2^{m+1}}\right)\right)^{n}$.
Fix a large enough $m$ so that for each $a\in A_{m}$ and every $x,y\in B_{a}$,
we have that $\norm{x-y}{}_{B_{2}^{n}}<\frac{\eps}{\mu\left(\R^{n}\right)}<\eps$.
Define the measure 
\[
\mu_{m}:=\sum_{a\in A_{m}\setminus\left\{ 0\right\} }\mu(B_{a})\delta_{a}+\left(\mu\left(B_{0}\right)+\mu\left(E^{c}\right)\right)\delta_{0}.
\]
We claim that for every $y\in\met{\mu}$, there exists $y'\in\met{\mu_{m}}$
such that $\norm{y-y'}_{B_{2}^{n}}\le2\eps$. Indeed, let $y\in\met{\mu}$,
and $0\le f\le1$ be a function such that $\int_{\R^{n}}f\left(x\right)\d{\mu}\left(x\right)=1$
and $y=\int_{\R^{n}}xf\left(x\right)\d{\mu}\left(x\right)$. Correspondingly,
we define the function $g$ with support in $A_{m}$ by setting 
\[
g(a):=\begin{cases}
\frac{\int_{B_{a}}f(x)\d{\mu}(x)}{\mu(B_{a})}, & \text{ }a\in A_{m}\setminus\left\{ 0\right\} \text{ and }\mu\left(B_{a}\right)\neq0\\
\frac{\int_{B_{0}}f\left(x\right)\d{\mu}\left(x\right)+\int_{E^{c}}f\left(x\right)\d{\mu}\left(x\right)}{\mu\left(B_{0}\right)+\mu\left(E^{c}\right)}, & \text{ }a=0\text{ and }\mu\left(B_{0}\right)+\mu\left(E^{c}\right)\neq0\\
0, & \text{otherwise}
\end{cases}.
\]
 One can verify that $0\le g\le1$. Moreover, we have that 
\begin{align*}
\int_{\mathbb{R}^{n}}g(x)\d{\mu_{m}}(x) & =\sum_{a\in A_{m}\setminus\left\{ 0\right\} }\int_{B_{a}}f(x)\d{\mu}(x)+\int_{B_{0}}f\left(x\right)\d{\mu\left(x\right)}+\int_{E^{c}}f\left(x\right)\d{\mu}\left(x\right)\\
 & =\int_{\R^{n}}f\left(x\right)\d{\mu\left(x\right)}.
\end{align*}
Thus, $y':=\int_{\R^{n}}xg\left(x\right)\d{\mu}\left(x\right)\in\met{\mu_{m}}.$
A direct computation shows that 

\begin{align*}
\|y'-y\|_{B_{2}^{n}} & \le\sum_{a\in A_{m}}\int_{B_{a}}\norm{x-a}_{B_{2}^{n}}f(x)\d{\mu}(x)+\int_{E^{c}}\|x\|_{B_{2}^{n}}f\left(x\right)\d{\mu}\left(x\right)\\
 & \le\frac{\eps}{\mu\left(\R^{n}\right)}\sum_{a\in A_{m}}\mu\left(B_{a}\right)+\eps\\
 & \le2\eps.
\end{align*}
The reverse statement is also true. Namely, for any $y'\in\met{\mu_{m}}$
there exists $y\in\met{\mu}$ such that $\norm{y-y'}_{B_{2}^{n}}\le2\eps$.
Indeed, let $y'\in\met{\mu_{m}}$, and $0\le g\le1$ be a function
such that $\int_{\R^{n}}g\left(x\right)\d{\mu_{m}\left(x\right)}=1$
and $\int_{\R^{n}}g\left(x\right)\d{\mu_{m}\left(x\right)}=y'$. Correspondingly,
we define the function $f$ by setting

\[
f(x):=\begin{cases}
g\left(a\right), & x\in B_{a}\text{ for some }a\in A_{m}\\
g\left(0\right), & {\rm otherwise}
\end{cases}.
\]
Clearly, $0\le f\left(x\right)\le1$. Moreover, 
\begin{align*}
\int_{\R^{n}}f\left(x\right)\d{\mu\left(x\right)} & =\sum_{a\in A_{m}}\text{\ensuremath{\int_{B_{a}}g\left(a\right)\d{\mu\left(x\right)}+\int_{E^{c}}g\left(0\right)\d{\mu\left(x\right)}=\sum_{a\in A_{m}}\mu\left(B_{a}\right)g\left(a\right)+\mu\left(E^{c}\right)g\left(0\right)}}\\
 & =\sum_{a\in A_{m}\backslash\left\{ 0\right\} }\mu\left(B_{a}\right)g\left(a\right)+\left(\mu\left(B_{0}\right)+\mu\left(E^{c}\right)\right)g\left(0\right)\\
 & =\int_{\R^{n}}g\left(x\right)\d{\mu_{m}\left(x\right)}=1.
\end{align*}
Setting $y=\int_{\R^{n}}xf\left(x\right)\d{\mu_{m}\left(x\right)}$,
we obtain that
\begin{align*}
\norm{y-y'}{}_{B_{2}^{n}} & \le\sum_{a\in A_{m}}\int_{B_{a}}\norm{x-a}_{B_{2}^{n}}g(a)\d{\mu}(x)+\int_{E^{c}}g\left(0\right)\norm x_{B_{2}^{n}}\d{\mu\left(x\right)}\\
 & \le\frac{\eps}{\mu\left(\R^{n}\right)}\sum_{a\in A_{m}}\mu\left(B_{a}\right)+g\left(0\right)\eps\\
 & \le2\eps.
\end{align*}

Using the same argument as in the proof of Lemma \ref{lem:approx_infinite},
one can verify that 
\[
\left(1-2\eps\right)\met{\mu}\sub\met{\mu_{m}}\sub\left(1+2\eps\right)\met{\mu}.
\]
On the other hand, $\int_{\R^{n}}\norm x{}_{K}\d{\mu_{m}}(x)\le\int_{\R^{n}}\norm x{}_{K}\d{\mu}(x)+\eps$
is straightforward if one breaks down the integration to small partitions
$B_{a}$ and $E^{c}$. 

By replacing $\mu_{m}$ with the pushforward measure $\nu=\frac{I_{n}}{1-2\eps}\#\mu_{m}$,
it follows from Fact \ref{fact:met_lin_inv} that 
\[
\met{\mu}\sub\met{\nu}\sub\frac{\left(1+2\eps\right)}{1-2\eps}\met{\mu},
\]
and, in particular, $K\sub\met{\nu}$. Furthermore, 
\[
\int_{\R^{n}}\norm x{}_{K}\d{\nu}(x)=\frac{1}{1-2\eps}\int_{\R^{n}}\norm x{}_{K}\d{\mu_{m}}(x)\le\frac{1}{1-2\eps}\int_{\R^{n}}\norm x{}_{K}\d{\mu}(x)+\frac{\eps}{1-2\eps}.
\]
\end{proof}

\subsection{Scaling effect on discrete measure\textmd{s}}

Another property that we shall use in the sequel is the following
behavior of metronoids that are generated by discrete measures, under
scaling:
\begin{prop}
\label{prop:scaling_effect}Let $x_{1},\dots,x_{m}\in\R^{n}$, $a_{1},\dots,a_{m}\ge0$,
and $\mu=\sum_{i=0}^{m}a_{i}\delta_{x_{i}}$, where $x_{0}=0$, and
$a_{0}\ge0$ . Then for any choice of $r_{1},\dots,r_{m}\ge1$, the
measure $\nu=\sum_{i=1}^{m}\frac{a_{i}}{r_{i}}\delta_{r_{i}x_{i}}+\delta_{0}$
satisfies that $\met{\mu}\sub\met{\nu}$, where equality holds whenever
$\sum_{i=1}^{m}a_{i}\le1$ and $a_{0}\ge1$. Moreover, for any convex
body $K\sub\mathbb{R}^{n}$ containing $0$, we have that $\int_{\R^{n}}\norm x{}_{K}\d{\mu}(x)=\int_{\R^{n}}\norm x{}_{K}\d{\nu}(x)$. 
\end{prop}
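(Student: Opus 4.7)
The plan is to unwind the discrete description of metronoids and verify the three claims by direct book-keeping with the weights.

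For the norm identity, I will use $x_{0}=0$ to discard the atom at the origin from both integrals. What remains is that each pair of corresponding atoms $a_{i}\delta_{x_{i}}$ of $\mu$ and $\frac{a_{i}}{r_{i}}\delta_{r_{i}x_{i}}$ of $\nu$ contributes the same amount $a_{i}\norm{x_{i}}_{K}$, since the factor $r_{i}$ produced by the positive homogeneity of $\norm{\cdot}_{K}$ exactly cancels the factor $1/r_{i}$ in the weight.

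For the inclusion $\met{\mu}\sub\met{\nu}$, the idea is that pushing an atom outward while shrinking its weight only enlarges the metronoid, because the slack can be absorbed by the atom at the origin, whose $\nu$-mass is $1$. Concretely, given $y\in\met{\mu}$, I will write $y=\sum_{i=1}^{m}\lambda_{i}a_{i}x_{i}$ with $\lambda_{i}\in[0,1]$ and $\lambda_{0}a_{0}+\sum_{i=1}^{m}\lambda_{i}a_{i}=1$ (using $x_{0}=0$). I then assign the same coefficient $\lambda_{i}$ to the $\nu$-atom at $r_{i}x_{i}$, contributing $\lambda_{i}\cdot\frac{a_{i}}{r_{i}}\cdot r_{i}x_{i}=\lambda_{i}a_{i}x_{i}$ to the combination. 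Since $r_{i}\ge1$, the total $\nu$-mass consumed on the non-origin atoms is $\sum_{i=1}^{m}\lambda_{i}a_{i}/r_{i}\le\sum_{i=1}^{m}\lambda_{i}a_{i}\le1$, so assigning the remaining mass $\mu_{0}:=1-\sum_{i=1}^{m}\lambda_{i}a_{i}/r_{i}\in[0,1]$ to $\delta_{0}$ produces a valid representation of $y$ in $\met{\nu}$.

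For the equality claim, the plan is to show that under the hypotheses $a_{0}\ge1$ and $\sum_{i=1}^{m}a_{i}\le1$, both $\met{\mu}$ and $\met{\nu}$ coincide with the zonotope $Z:=Z(a_{1}x_{1},\dots,a_{m}x_{m})$. One inclusion is already provided by Proposition \ref{prop:Z_cap_P} (noting that the atom at the origin contributes a zero segment, which does not enlarge the zonotope). For the reverse inclusion, given $\lambda_{1},\dots,\lambda_{m}\in[0,1]$, the coefficient $\lambda_{0}:=(1-\sum_{i=1}^{m}\lambda_{i}a_{i})/a_{0}$ lies in $[0,1/a_{0}]\sub[0,1]$ because the numerator lies in $[0,1]$ by $\sum a_{i}\le1$ and the denominator satisfies $a_{0}\ge1$; this yields a valid $\mu$-representation of $\sum\lambda_{i}a_{i}x_{i}$. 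The identical computation applies to $\nu$, whose origin-mass is $1$ and whose total non-origin mass $\sum_{i=1}^{m}a_{i}/r_{i}$ is bounded by $\sum_{i=1}^{m}a_{i}\le1$, giving $\met{\nu}=Z$ as well. The argument is essentially careful accounting of weights; I do not anticipate a genuine obstacle, only the need to verify that the admissible ranges $[0,1]$ are preserved at each reassignment.
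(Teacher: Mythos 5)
Your proposal is correct and follows essentially the same argument as the paper: the inclusion $\met{\mu}\sub\met{\nu}$ and the norm identity are proved by exactly the paper's coefficient reassignment, with the leftover mass absorbed by the origin atom (valid since $r_{i}\ge1$). The equality step differs only in packaging --- you identify both $\met{\mu}$ and $\met{\nu}$ with the zonotope $Z(a_{1}x_{1},\dots,a_{m}x_{m})$ via Proposition \ref{prop:Z_cap_P}, whereas the paper transfers a $\nu$-representation directly back to a $\mu$-representation, but the underlying bookkeeping (using $a_{0}\ge1$ and $\sum_{i=1}^{m}a_{i}\le1$ to place the slack on the origin atom) is identical.
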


\begin{proof}
Let $y\in\met{\mu}$. Then there exists a function $0\le f\le1$ such
that $\int_{\R^{n}}f(x)\d{\mu}(x)=1$, and $y=\int_{\R^{n}}xf(x)\d{\mu}(x)$.
We construct a function $g$, with support on $\left\{ 0,r_{1}x_{1},\cdots,r_{m}x_{m}\right\} $,
as follows; $g(r_{i}x_{i}):=f(x_{i})$ for $i\in\left\{ 1,\dots,m\right\} $,
and $g(0)=1-\sum_{i=1}^{m}\frac{a_{i}}{r_{i}}g(r_{i}x_{i})$. One
can easily verify that $0\le g\left(r_{i}x_{i}\right)\le1$ for all
$i\in\left\{ 1,\dots,m\right\} $, and that also $0\le g\left(0\right)\le1$,
due to the fact that $r_{i}\ge1$. Moreover, we have that 

\[
\int_{\R^{n}}g(x)\d{\nu}(x)=\sum_{i=1}^{m}\frac{a_{i}}{r_{i}}g(r_{i}x_{i})+\left(1-\sum_{i=1}^{m}\frac{a_{i}}{r_{i}}g(r_{i}x_{i})\right)=1,
\]
and

\[
\int_{\R^{n}}xg(x)\d{\nu}(x)=\sum_{i=1}^{m}r_{i}x_{i}g(r_{i}x_{i})\frac{a_{i}}{r_{i}}=\sum_{i=1}^{m}f(x_{i})a_{i}x_{i}=y,
\]
as claimed. 

Next, assume that $\sum_{i=1}^{m}a_{i}\le1$ and $a_{0}\ge1$. Let
$z\in\met{\nu}$. Then there exists a function $0\le f\le1$ such
that $\sum_{i=0}^{m}\frac{a_{i}}{r_{i}}f(r_{i}x_{i})=1$ and $\sum_{i=0}^{m}a_{i}f(r_{i}x_{i})=z$.
We define a new function $0\le g\le1$ whose support is $\left\{ {0,x_{1},\cdots,x_{n}}\right\} $
by setting $g(x_{i}):=f(r_{i}x_{i})$ for $i\in\left\{ 1,\dots,m\right\} $,
and $g\left(0\right)=1-\sum_{i=1}^{m}a_{i}g(x_{i})$. Thus, we have
that $0\le g\le1$, $\sum_{i=0}^{m}a_{i}g(x_{i})=1$, and $\sum_{i=0}^{m}a_{i}g(x_{i})x_{i}=z$.
Therefore, $\met{\nu}\sub\met{\mu}$, and hence $\met{\nu}=\met{\mu}$.\\

Finally, let $K\sub\R^{n}$ be a convex body containing $0$. Since
for any $r\ge0$ and $x\in\R^{n}$, we have that $\norm{rx}{}_{K}=r\norm x{}_{K}$,
it follows that 

\[
\int_{\R^{n}}\norm x{}_{K}\d{\mu}(x)=\sum_{i=1}^{m}a_{i}\norm{x_{i}}{}_{K}=\sum_{i=1}^{m}\frac{a_{i}}{r_{i}}\norm{r_{i}x_{i}}{}_{K}=\int_{\R^{n}}\norm x{}_{K}\d{\nu}(x).
\]
 
\end{proof}
The following observation is an immediate consequence of Proposition
\ref{prop:scaling_effect}:
\begin{cor}
Let $K\sub\R^{n}$ be a convex body containing $0$. Suppose $\mu$
is a finite discrete measure such that $K\sub\met{\mu}$. Then there
exists a discrete probability measure $\nu$ such that $\text{K\ensuremath{\sub}}\met{\nu+\delta_{0}}$,
and $\int_{\R^{n}}\norm x{}_{K}\d{\mu}(x)=\int_{\R^{n}}\norm x{}_{K}\d{\nu}(x)$.
\end{cor}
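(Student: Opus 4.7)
The plan is to construct $\nu$ explicitly from $\mu$ by rescaling its non-origin atoms so that the total mass equals $1$, padding with extra mass at $0$ when necessary, and then invoking Proposition~\ref{prop:scaling_effect} or Remark~\ref{rem:fvein_zonotope} to confirm the inclusion $K\sub\met{\nu+\delta_{0}}$. Write $\mu=a_{0}\delta_{0}+\sum_{i=1}^{m}a_{i}\delta_{x_{i}}$ with $x_{i}\neq0$ and $a_{i}>0$ for $i\geq1$. Since $K$ is a convex body and $K\sub\met{\mu}$, the metronoid $\met{\mu}$ has non-empty interior, which forces at least one non-origin atom and hence $s:=\sum_{i=1}^{m}a_{i}>0$.

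If $s\geq1$, I would apply Proposition~\ref{prop:scaling_effect} with the uniform scaling factor $r_{i}=s\geq1$, producing the measure $\nu+\delta_{0}$ with $\nu:=\sum_{i=1}^{m}(a_{i}/s)\delta_{sx_{i}}$. The proposition directly provides $\met{\mu}\sub\met{\nu+\delta_{0}}$ and preserves the cost, and $\nu(\R^{n})=\sum_{i=1}^{m}a_{i}/s=1$, so $\nu$ is a discrete probability measure. If on the other hand $s<1$, Proposition~\ref{prop:scaling_effect} alone cannot bring the non-origin mass up to $1$, so instead I would set
\[
\nu:=(1-s)\delta_{0}+\sum_{i=1}^{m}a_{i}\delta_{x_{i}},
\]
which is a discrete probability measure whose cost equals $\int\norm x_{K}\d{\mu}(x)$ because $\norm 0_{K}=0$. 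To check the containment, observe that $\nu+\delta_{0}$ has total mass $2$ and places mass $2-s\geq1$ at the origin, so Remark~\ref{rem:fvein_zonotope} gives $\met{\nu+\delta_{0}}=Z(\nu+\delta_{0})=\sum_{i=1}^{m}[0,a_{i}x_{i}]=Z(\mu)$, and combining with $K\sub\met{\mu}\sub Z(\mu)$ from Proposition~\ref{prop:mu_Z_cap_p} finishes this case.

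The only genuine obstacle is the sub-critical regime $s<1$, which falls outside the $r_{i}\geq1$ scope of Proposition~\ref{prop:scaling_effect}; the identity $\met{\rho}=Z(\rho)$ supplied by Remark~\ref{rem:fvein_zonotope} for measures $\rho$ with $\rho(\R^{n})\leq2$ and $\rho(\{0\})\geq1$ is precisely what licenses the $(1-s)\delta_{0}$ padding and guarantees that raising the total mass to $2$ does not cost us the inclusion $K\sub\met{\nu+\delta_{0}}$.
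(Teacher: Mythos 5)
Your argument is correct, but you can avoid the case split entirely, and that is what the paper does: writing $\mu=\sum_{i=0}^{m}a_{i}\delta_{x_{i}}$ with $x_{0}=0$, it rescales by $r=\mu(\R^{n})=\sum_{i=0}^{m}a_{i}$, which is automatically $\ge1$ (otherwise $\met{\mu}=\emptyset$), and takes $\nu=\sum_{i=0}^{m}\frac{a_{i}}{r}\delta_{rx_{i}}$; a single application of Proposition \ref{prop:scaling_effect} with $r_{i}=r$ then gives both the inclusion and the cost identity in one stroke. Your sub-critical case $s<1$ arises only because you chose to rescale by the off-origin mass $s$ rather than the total mass; your workaround for it — padding with $(1-s)\delta_{0}$ and invoking Remark \ref{rem:fvein_zonotope} together with $\met{\mu}\sub Z(\mu)$ from Proposition \ref{prop:mu_Z_cap_p} to get $K\sub Z(\mu)=\met{\nu+\delta_{0}}$ — is a legitimate and correctly executed alternative (the hypotheses $\rho(\R^{n})\le2$, $\rho(\{0\})\ge1$ of the Remark are indeed met by $\rho=\nu+\delta_{0}$), and it has the mild aesthetic advantage of not moving the support points in that regime. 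What the paper's choice buys is uniformity: one scaling factor, one lemma, no cases.
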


\begin{proof}
Suppose $\mu=\sum_{i=0}^{m}a_{i}\delta_{x_{i}}$ with $a_{i}\ge0$
and $x_{0}=0$, and let $r=\mu\left(\R^{n}\right)=\sum_{i=0}^{m}a_{i}\ge1$.
Then Proposition \ref{prop:scaling_effect} implies that the measure
$\nu=\sum\frac{a_{i}}{r}\delta_{rx_{i}}$ satisfies our claim.
\end{proof}

\section{Estimating $d_{R}^{*}\left(K\right)$ and $D_{R}^{*}\left(K\right)$
\label{sec:Estimating_d_D}}

In this section we prove Theorems \ref{thm:main_upper_sym} and \ref{thm:main_upper_bdd}. 

\subsection{Proof of Theorem \ref{thm:main_upper_sym}}

Let $\sigma_{r}$ denote the uniform probability measure on $r\Sph^{n-1}$.
For $r=1$ we simply denote $\sigma=\sigma_{1}$. 
\begin{proof}
[Proof of Theorem \ref{thm:main_upper_sym}] Let $K\sub\R^{n}$ be
a centrally-symmetric convex body such that $B_{2}^{n}$ is the minimal
volume circumscribed ellipsoid of $K$. By John's theorem (see e.g.,
\cite{AGM15}), we have that $\frac{1}{\sqrt{n}}B_{2}^{n}\sub K\sub B_{2}^{n}$. 

Consider the measure $\mu=2\sigma_{R}$, where $R=\left(\int_{\Sph^{n-1}}\left|\iprod x{\theta}\right|\d{\sigma}\left(x\right)\right)^{-1}$.
Let $e_{1}\in S^{n-1}$, and define $f\left(x\right)=\one_{\left\{ y\,:\,\iprod y{e_{1}}>0\right\} }\left(x\right)$.
Then we have $\int_{\R^{n}}f\left(x\right)\d{\mu}\left(x\right)=1$,
and therefore,
\[
h_{M\left(\mu\right)}\left(e_{1}\right)\ge\int_{\iprod x{e_{1}}>0}\iprod x{e_{1}}\d{\mu}(x)=\int_{\R^{n}}\left|\langle x,\,e_{1}\rangle\right|\d{\sigma_{R}}\left(x\right)=R\int_{\Sph^{n-1}}\left|\iprod x{\theta}\right|\d{\sigma}\left(x\right)=1,
\]
which implies that $B_{2}^{n}\sub\met{\mu}$. In fact, Proposition
\ref{ExtremePointofMu} tells us that $h_{M\left(\mu\right)}\left(e_{1}\right)=1$,
which means that $\met{\mu}=B_{2}^{n}$, and hence $\frac{1}{\sqrt{n}}\met{\mu}\sub K\sub\met{\mu}$. 

Finally, note that $\mu\left(\R^{n}\right)=2$. Moreover, by a standard
computation, one can verify that 

\begin{equation}
\int_{\Sph^{n-1}}\left|\iprod{\theta}{e_{1}}\right|\d{\sigma}\left(\theta\right)=\sqrt{\frac{2}{\pi n}}\left(1+{\rm o}\left(1\right)\right).\label{eq:aver_inner_prod}
\end{equation}
Therefore, 
\[
\int_{\R^{n}}\norm x_{K}\d{\mu}\left(x\right)\le\sqrt{n}\int_{\R^{n}}\norm x_{B_{2}^{n}}\d{\mu}\left(x\right)\le Cn,
\]
which completes our proof. 
\end{proof}

\subsection{Proof of Theorem \ref{thm:main_upper_bdd}}

To prove Theorem \ref{thm:main_upper_bdd}, we need the following
consequence of the Brunn-Minkowski theorem which was observed (in
equivalent forms) several times in the literature, e.g., in \cite{Mit69},
and \cite{Szarek2014}. For the sake of completeness, we provide a
proof.
\begin{prop}
\label{TailBound_Volume} Let $K\sub\mathbb{R}^{n}$ be a centered
convex body. Fix $R>1$, $u\in\Sph^{n-1}$, and $r=h_{K}\left(u\right)$.
Let $L:=K\cap\left\{ x\in\R^{n}\,:\,\iprod xu\ge\frac{r}{R}\right\} $.
Then
\[
\frac{\text{vol}(L)}{\text{vol}(K)}\ge\exp\left(-1-\frac{n-1}{R-1}\right).
\]
\end{prop}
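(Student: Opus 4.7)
The plan is to reduce the claim to a one-dimensional estimate via Brunn's theorem and then to exploit the $\frac{1}{n-1}$-concavity of the slice function together with the centering condition.

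Set $a := -h_{K}(-u) \leq 0$ so that $[a, r]$ is the range of $\langle \cdot, u \rangle$ on $K$, and $t_{0} := r/R$. Let $v(t) := \text{vol}_{n-1}\bigl(K \cap \{x : \langle x, u\rangle = t\}\bigr)$ denote the slice function. By Brunn's theorem, $\phi := v^{1/(n-1)}$ is concave on $[a, r]$, and since $t = r$ is a supporting hyperplane, $\phi(r) = 0$. The claim translates to a lower bound on $\int_{t_{0}}^{r} v\,\d t \big/ \int_{a}^{r} v\,\d t$ subject to the centering $\int_{a}^{r} tv(t)\,\d t = 0$. A standard chord argument for non-negative concave functions vanishing at an endpoint shows that $\phi(t)/(r - t)$ is non-decreasing on $[a, r)$, whence
\[
v(t) \leq v(t_{0})\Bigl(\tfrac{r-t}{r-t_{0}}\Bigr)^{n-1} \text{ for } t \in [a, t_{0}], \quad v(t) \geq v(t_{0})\Bigl(\tfrac{r-t}{r-t_{0}}\Bigr)^{n-1} \text{ for } t \in [t_{0}, r].
\]
Integrating the right-hand inequality already produces the easy half, $\text{vol}(L) \geq v(t_{0})(r - t_{0})/n$.

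The hard half is a matching upper bound on $\text{vol}(K)$. Introduce the cone density $w(t) := v(t_{0})\bigl((r-t)/(r-t_{0})\bigr)^{n-1}$. By the previous step, $v(t) - w(t)$ has the same sign as $t - t_{0}$, so $(t - t_{0})(v(t) - w(t)) \geq 0$ pointwise on $[a, r]$. Integrating and invoking the centering $\int_a^r tv\,\d t = 0$,
\[
-t_{0}\,\text{vol}(K) \;=\; \int_{a}^{r}(t - t_{0})v(t)\,\d t \;\geq\; \int_{a}^{r}(t - t_{0})w(t)\,\d t,
\]
and the right-hand side evaluates to an explicit polynomial in $r - a$. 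Rearranging this inequality converts it into an upper bound on $\text{vol}(K)$ depending only on $v(t_{0})$, $r$, $t_{0}$ and $r - a$. Combining with $\text{vol}(L) \geq v(t_{0})(r - t_{0})/n$ and simplifying with the elementary estimates $n\log(1 + 1/n) \leq 1$ and $\log(1 + x) \leq x$ yields the advertised form $\exp(-1 - (n-1)/(R - 1))$.

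The main obstacle is the third step. Using the centering alone (for example through the Grunbaum-type bound $|a| \leq nr$) gives only $r - a \leq (n+1)r$, which produces a lower bound of the order $\bigl((R-1)/(R(n+1))\bigr)^{n}$, exponentially weaker than what is needed. The coupling $\int_{a}^{r}(t - t_{0})(v - w)\,\d t \geq 0$ is the key ingredient: it harnesses the Brunn-Minkowski concavity and the centering condition jointly and produces the correct exponential decay, with the extremal case essentially that of a centered simplex with apex in direction $u$.
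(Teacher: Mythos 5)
Your first two steps are essentially sound, with one small caveat: $\phi(r)$ need not vanish (take $K$ a cube and $u$ a coordinate direction), so the monotonicity of $\phi(t)/(r-t)$ is not available as stated; however, the two chord inequalities you actually use do survive, by comparing the concave $\phi$ on $[a,t_0]$ and on $[t_0,r]$ with the line through $(t_0,\phi(t_0))$ and $(r,0)$. The genuine gap is in the third step. Carrying out your coupling, the centering gives
\[
\vol K\ \le\ \frac{v(t_0)}{t_0\,(r-t_0)^{n-1}}\left[\frac{\rho^{n+1}}{n+1}-(r-t_0)\frac{\rho^{n}}{n}\right],\qquad \rho:=r-a,
\]
and combining with $\vol L\ge v(t_0)(r-t_0)/n$ yields
\[
\frac{\vol L}{\vol K}\ \ge\ \frac{t_0\,(r-t_0)^{n}}{\rho^{n}\left[\frac{n\rho}{n+1}-(r-t_0)\right]},
\]
which tends to $0$ as $\rho\to\infty$. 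So the coupling is not the cure for the dependence on $r-a$; it is exactly the step that produces it. The only a priori control on $\rho$ coming from centering is $\rho\le(n+1)r$, and substituting that worst case returns you to the exponentially weak $\bigl((R-1)/(R(n+1))\bigr)^{n}$ estimate you already rejected. Note that large $\rho$ is genuinely compatible with all of your constraints (a long thin tail in direction $-u$ balances the centroid with negligible volume), so your bound is simply lossy in that regime; the true ratio there is close to $(1-1/R)^{n}$. The missing idea is to decouple the two difficulties: run the cone comparison only over the fixed interval $[0,r]$, which gives $\vol L/\vol{K_{+}}\ge\int_{t_0}^{r}w\big/\int_{0}^{r}w=(1-1/R)^{n}$ with no reference to $a$, where $K_{+}=K\cap\{x:\iprod xu\ge0\}$, and then absorb the half of $K$ below the centroid by Gr\"unbaum's inequality $\vol{K_{+}}\ge e^{-1}\vol K$. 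That is the paper's route.

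A further caveat: the constant you are aiming for is not attainable, so no completion of your argument can reach it. For the centered triangle in $\R^{2}$ with apex in direction $u$ and $R=2$ one gets $\vol L/\vol K=1/9<e^{-2}=\exp\left(-1-\frac{n-1}{R-1}\right)$. The cone comparison on $[0,r]$ yields $(1-1/R)^{n}$, not $(1-1/R)^{n-1}$ (the paper's displayed computation contains the same off-by-one), so the correct form of the proposition is $\vol L/\vol K\ge e^{-1}(1-1/R)^{n}\ge\exp\left(-1-\frac{n}{R-1}\right)$, which is what the decoupled argument proves and which still suffices for the applications.
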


To prove Proposition \ref{TailBound_Volume}, we need the following
lemma. Given $R>1$, and a non-negative concave function $f:\left[0,R\right]\rightarrow\R$,
let $\widetilde{f}:\left[0,R\right]\to\R$ denote the linear function
satisfying $\tilde{f}\left(1\right)=f\left(1\right)$ and $\tilde{f}\left(R\right)=0$. 
\begin{lem}
\label{TailBound_Convexity} For $R>1$, let $f:\left[0,R\right]\rightarrow\mathbb{\R}$
be a non-negative concave function. Then for any increasing function
$g:\left[0,\infty\right)\rightarrow\left[0,\infty\right)$, we have
\[
\frac{\int_{1}^{R}g\circ f(t)\d t}{\int_{0}^{R}g\circ f(t)\d t}\ge\frac{\int_{1}^{R}g\circ\tilde{f}(t)\d t}{\int_{0}^{R}g\circ\tilde{f}(t)\d t}.
\]
\end{lem}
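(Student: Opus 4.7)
The plan is to reduce the lemma to a pointwise comparison of $f$ and $\widetilde{f}$ on the two intervals $[0,1]$ and $[1,R]$, and then use monotonicity of $g$ followed by an elementary algebraic rearrangement.

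First, I would establish the two key pointwise estimates:
\emph{(i)} $f(t) \ge \widetilde{f}(t)$ for $t \in [1,R]$, and
\emph{(ii)} $f(t) \le \widetilde{f}(t)$ for $t \in [0,1]$.
Both use only concavity of $f$ and non-negativity of $f(R)$. For $(i)$, note that the chord of $f$ on $[1,R]$ lies below $f$ by concavity, and since $f(R) \ge 0 = \widetilde{f}(R)$ while the chord agrees with $\widetilde{f}$ at $t=1$, the chord lies above $\widetilde{f}$ on $[1,R]$. For $(ii)$, concavity implies that secant slopes decrease as the chord moves right, so for any $t \in [0,1)$,
\[
\frac{f(1)-f(t)}{1-t} \;\ge\; \frac{f(R)-f(1)}{R-1} \;\ge\; -\frac{f(1)}{R-1},
\]
which rearranges to $f(t) \le f(1)\cdot\frac{R-t}{R-1} = \widetilde{f}(t)$.

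Next, since $g$ is increasing and non-negative, estimates $(i)$ and $(ii)$ upgrade to $g\circ f \ge g\circ \widetilde{f}$ on $[1,R]$ and $g\circ f \le g\circ \widetilde{f}$ on $[0,1]$. Set
\[
A = \int_0^1 g\circ f\, dt,\quad B = \int_1^R g\circ f\, dt,\quad \widetilde{A} = \int_0^1 g\circ \widetilde{f}\, dt,\quad \widetilde{B} = \int_1^R g\circ \widetilde{f}\, dt,
\]
so that $A \le \widetilde{A}$ and $B \ge \widetilde{B}$. The desired inequality is $\frac{B}{A+B} \ge \frac{\widetilde{B}}{\widetilde{A}+\widetilde{B}}$, which (after clearing denominators, assuming $A,\widetilde{A}>0$) is equivalent to $B\widetilde{A} \ge A\widetilde{B}$; this follows immediately from $B \ge \widetilde{B} \ge 0$ and $\widetilde{A} \ge A \ge 0$. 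The degenerate cases $A=0$ or $\widetilde{A}=0$ are handled directly: if $A=0$ the left side equals $1$, and if $\widetilde{A}=0$ the right side equals $1$ and the estimate $B \ge \widetilde{B}$ together with $A \le \widetilde{A} = 0$ forces $A=0$ as well, making both sides equal to $1$.

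There is really no substantial obstacle here; the only subtlety is keeping track of which direction the comparison between $f$ and $\widetilde{f}$ goes on each of the two subintervals, and verifying that the boundary condition $f(R)\ge 0$ together with concavity of $f$ is enough to force both inequalities. Once those two pointwise comparisons are in place, the rest is a one-line manipulation of ratios.
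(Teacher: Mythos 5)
Your proof is correct and follows essentially the same route as the paper: both establish the pointwise comparisons $f\le\widetilde f$ on $[0,1]$ and $f\ge\widetilde f$ on $[1,R]$ from concavity together with $f(R)\ge0$, apply monotonicity of $g$, and finish with the elementary ratio comparison $\frac{B}{A+B}\ge\frac{\widetilde B}{\widetilde A+\widetilde B}$. Your treatment of the degenerate cases is a small bonus the paper omits, but the argument is the same.
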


\begin{proof}[Proof of Lemma \ref{TailBound_Convexity} ]
Let $A_{1}:=\int_{0}^{1}g\circ f(t)\d t$ and $A_{2}:=\int_{1}^{R}g\circ f(t)\d t$.
In particular, 
\[
\frac{\int_{1}^{R}g\circ f(t)\d t}{\int_{0}^{R}g\circ f(t)\d t}=\frac{A_{2}}{A_{1}+A_{2}}.
\]
Our next goal is to bound $A_{1}$ from above and $A_{2}$ from below.
To bound $A_{1}$ from above, note that since $f$ is concave and
non-negative, we have that for any $t\in[0,1]$,
\begin{eqnarray*}
f(t) & \le & f(1)-(1-t)\frac{f(R)-f(1)}{R-1}\\
 & \le & f(1)+(1-t)\frac{f(1)}{R-1}=\tilde{f}(t).
\end{eqnarray*}
Since $g$ is increasing, we thus obtain 
\[
A_{1}\le\int_{0}^{1}g(\tilde{f}(t))\d t.
\]
Similarly, we bound $A_{2}$ from above by noting that for any $t\in[1,R]$,
\begin{eqnarray*}
f(t) & \ge & f(1)+(t-1)\frac{f(R)-f(1)}{R-1}\\
 & \ge & f(1)-(t-1)\frac{f(1)}{R-1}=\tilde{f}(t),
\end{eqnarray*}
and hence
\[
A_{2}\ge\int_{1}^{R}g(\tilde{f}(t))\d t.
\]
Finally, the above bounds for $A_{1}$ and $A_{2}$ imply that 
\[
\frac{A_{2}}{A_{1}+A_{2}}\ge\frac{\int_{1}^{R}g(\tilde{f}(t))\d t}{\int_{0}^{1}g(\tilde{f}(t))\d t+\int_{1}^{R}g(\tilde{f}(t))\d t}.
\]
\end{proof}
\begin{proof}[Proof of Proposition \ref{TailBound_Volume} ]
We may rescale $K$ so that $r=R$. Let 
\[
K_{t}:=\left\{ x\in\mathbb{R}^{n}\,,\,\iprod xu=t\right\} ,\text{ }K_{+}:=K\cap\left\{ x\in\mathbb{R}^{n},\iprod xu\ge0\right\} ,
\]
and $f(t):=\text{vol}(K_{t})^{\frac{1}{n-1}}$. By Brunn-Minkowski
theorem, $f(t)$ is a concave function on its support. Moreover, we
clearly have that $\text{vol}(K_{+})=\int_{0}^{R}f^{n-1}(t)\d t$
and $\text{vol}(L)=\int_{1}^{R}f^{n-1}(t)\d t$. Also note that $\tilde{f}(t)=\frac{f(1)}{1-\frac{1}{R}}(1-\frac{t}{R})$.
Therefore, Lemma \ref{TailBound_Convexity}, applied with $f\left(t\right)$
and $g\left(t\right)=t^{n-1}$, implies that
\begin{eqnarray*}
\frac{\text{vol}(L)}{\text{vol}(K_{+})} & \ge & \frac{\int_{1}^{R}(1-\frac{t}{R})^{n-1}\d t}{\int_{0}^{R}(1-\frac{t}{R})^{n-1}\d t}=\frac{\frac{R}{n}(1-\frac{1}{R})^{n-1}}{\frac{R}{n}}\\
 & = & (1-\frac{1}{R})^{n-1}=(\frac{R}{R-1})^{-(n-1)}\\
 & = & \left((1+\frac{1}{R-1})^{n-1}\right)^{-1}\\
 & \ge & \exp(-\frac{n-1}{R-1}),
\end{eqnarray*}
where the last inequality relies on the fact that $1+x\le e^{x}$.
Since, by Grünbaum \cite{Gr60}, we know that $\frac{\text{vol}(K_{+})}{\text{vol}(K)}\ge\frac{1}{e}$,
our proof is complete.
\end{proof}
We are now ready to prove Theorem \ref{thm:main_upper_bdd}:
\begin{proof}[Proof of Theorem \ref{thm:main_upper_bdd} ]
 Let $K\sub\mathbb{R}^{n}$ be a centered convex body, and let $\mu$
be the uniform measure on $RK$, satisfying $\d{\mu}\left(x\right)=\frac{\exp\left(1+\frac{n-1}{R-1}\right)}{\vol{RK}}\one_{RK}(x)\d x$.
Define 
\[
L_{\theta}:=RK\cap\left\{ x\in\mathbb{R}^{n}\,,\,\iprod x{\theta}\ge h_{K}(\theta)\right\} .
\]
 By Proposition \ref{TailBound_Volume}, we have that 
\begin{equation}
\vol{L_{\theta}}\ge\exp\left(-1-\frac{n-1}{R-1}\right)\vol{RK}.\label{eq:vol_Ltheta}
\end{equation}
In particular, we get that $\mu\left(L_{\theta}\right)=\frac{\exp\left(1+\frac{n-1}{R-1}\right)}{\vol{RK}}\vol{L_{\theta}}\ge1$.

Fix $\theta\in\Sph^{n-1}$, and let $f_{\theta}\left(x\right)=\frac{1}{\mu(L_{\theta})}\mathbf{1}_{L_{\theta}}.$
By the previous argument, $0\le f_{\theta}\le1$, and $\int_{\R^{n}}f_{\theta}(x)\d{\mu}(x)=1$.
Therefore, $x_{\theta}:=\int_{\mathbb{R}^{n}}xf_{\theta}\left(x\right)\d{\mu}(x)\in\met{\mu}$.
In particular, by the definition of $L_{\theta}$, it follows that
\[
h_{\met{\mu}}\left(\theta\right)\ge\iprod{x_{\theta}}{\theta}\ge h_{K}\left(\theta\right)\int_{\R^{n}}f_{\theta}\left(x\right)\d{\mu}\left(x\right)=h_{K}\left(\theta\right),
\]
and therefore $K\sub\met{\mu}$. 

Finally, we have that $d_{R}^{*}\left(K\right)\le\mu\left(\R^{n}\right)=\exp\left(1+\frac{n-1}{R-1}\right)$,
and 
\[
D_{R}^{*}\left(K\right)\le\int_{\mathbb{R}^{n}}\norm x{}_{K}\d{\mu}(x)\le R\mu\left(\R^{n}\right)=R\exp\left(1+\frac{n-1}{R-1}\right).
\]
Moreover, it follows that $\fvein K\le D_{n}^{*}\left(K\right)\le e^{2}n$.
\end{proof}

\section{the fractional vertex index \label{sec:fvein}}

\subsection{\label{sec:fvein_exact_comp}A couple of extremal examples}

\subsubsection{\textbf{The vertex index of the cross-polytope}}
\begin{prop}
\label{prop:fvein_B1}We have that $\fvein{B_{1}^{n}}=2n$. 
\end{prop}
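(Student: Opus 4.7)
\textbf{Proof plan for Proposition \ref{prop:fvein_B1}.}

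The strategy is to prove two matching bounds. For the upper bound, I will exhibit an explicit measure; for the lower bound, I will extract $n$ independent one-dimensional constraints from Proposition \ref{prop:necessary_1} and sum them up.

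For the upper bound $\fvein{B_1^n} \le 2n$, consider the discrete measure $\mu = \sum_{i=1}^n (\delta_{e_i} + \delta_{-e_i})$. All weights are $1$, and by the discrete description of $\met{\mu}$ (every admissible coefficient $\lambda$ satisfies $0 \le \lambda \le 1$, so the constraint $\lambda \le w_i = 1$ is vacuous), the generated metronoid coincides with $\conv\{\pm e_1,\ldots,\pm e_n\} = B_1^n$. In particular $B_1^n \sub \met{\mu}$, and the cost is
\[
\int_{\R^n}\norm x_{B_1^n}\d{\mu}(x) \;=\; \sum_{i=1}^n \bigl(\norm{e_i}_{B_1^n} + \norm{-e_i}_{B_1^n}\bigr) \;=\; 2n.
\]

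For the lower bound $\fvein{B_1^n} \ge 2n$, let $\mu$ be any Borel measure with $B_1^n \sub \met{\mu}$. Applying Proposition \ref{prop:necessary_1} to both $\theta$ and $-\theta$ and adding, I get for every $\theta\in\Sph^{n-1}$ the symmetric tail estimate
\[
h_{B_1^n}(\theta) + h_{B_1^n}(-\theta) \;\le\; \int_{H_\theta^+}\iprod x\theta\d\mu(x) + \int_{H_{-\theta}^+}\iprod x{-\theta}\d\mu(x) \;=\; \int_{\R^n}\bigl|\iprod x\theta\bigr|\d\mu(x).
\]
Specializing to $\theta = e_i$ and using $h_{B_1^n}(\pm e_i) = 1$ yields $\int_{\R^n}|x_i|\d\mu(x) \ge 2$ for each $i\in\{1,\dots,n\}$. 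Summing over $i$ and recalling that $\norm x_{B_1^n} = \sum_{i=1}^n |x_i|$ gives
\[
\int_{\R^n}\norm x_{B_1^n}\d\mu(x) \;=\; \sum_{i=1}^n \int_{\R^n}|x_i|\d\mu(x) \;\ge\; 2n,
\]
and taking the infimum over all admissible $\mu$ yields $\fvein{B_1^n}\ge 2n$.

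I do not anticipate any serious obstacle: the upper bound is an explicit calculation, and the lower bound is a clean application of Proposition \ref{prop:necessary_1} combined with the lucky fact that the coordinate functionals $e_i$ simultaneously attain $h_{B_1^n}$ and whose absolute values sum to $\norm{\cdot}_{B_1^n}$. The only minor point of care is the symmetric version of Proposition \ref{prop:necessary_1}: one must observe that the contribution of $\{\iprod x\theta = 0\}$ to $\int|\iprod x\theta|\d\mu$ is zero, so combining the two half-space bounds indeed reconstructs the full integral of $|\iprod x\theta|$.
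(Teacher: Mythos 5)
Your proof is correct and follows essentially the same route as the paper: the same measure $\mu=\sum_{i=1}^{n}(\delta_{e_{i}}+\delta_{-e_{i}})$ for the upper bound, and for the lower bound the same pairing of the directions $\pm e_{i}$ to get $\int_{\R^{n}}\left|\iprod x{e_{i}}\right|\d{\mu}(x)\ge2$, summed over $i$. The only cosmetic difference is that you invoke Proposition \ref{prop:necessary_1} directly where the paper re-derives that inequality inline via the truncated function $g$, and your remark about the null contribution of $\{\iprod x{\theta}=0\}$ correctly handles the strict half-space in that proposition.
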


\begin{proof}
We follow the lines of the proof in \cite{BezLitvak07} that $\vein{B_{1}^{n}}=2n$.
Let $\norm{\cdot}_{1}$ denote the norm induced by $B_{1}^{n}$, that
is $\norm x_{1}=\sum_{i=1}^{n}\left|x_{i}\right|=\sum_{i=1}^{n}\left|\iprod x{e_{i}}\right|$,
where $e_{1},\dots,e_{n}$ are the standard basis of $\R^{n}$. Let
$\mu$ be a measure such that $B_{1}^{n}\sub\met{\mu}$. Then for
each $e_{i}$ there exists a function $0\le f\le1$ such that $e_{i}=\int_{\R^{n}}xf(x)\d{\mu}(x)$,
and hence $1=\int_{\R^{n}}\iprod x{e_{i}}f(x)\d{\mu}(x)$. Furthermore,
if we define the function $g$ by 

\[
g(x)=\begin{cases}
f(x), & \text{if }\iprod x{e_{i}}\ge0\\
0, & \text{otherwise}
\end{cases},
\]
we obtain the following inequality: $1\le\int_{\R^{n}}\langle x,\,e_{i}\rangle g(x)\d{\mu}(x)\le\int_{\R^{n}}\max\left\{ \langle x,\,e_{i}\rangle,0\right\} \d{\mu}(x)$.
Applying the same argument to $-e_{i}$, we have $1\le\int_{\R^{n}}\max\left\{ \langle x,\,-e_{i}\rangle,0\right\} \d{\mu}(x)$.
Therefore, it follows that 

\[
\int_{\R^{n}}\norm x_{1}\d{\mu}(x)=\sum_{i=1}^{n}\int_{\R^{n}}\left|\iprod x{e_{i}}\right|\d{\mu}(x)\ge2n.
\]

On the other hand, if $\mu=\sum_{i=1}^{n}(\delta_{e_{i}}+\delta_{-e_{i}})$,
then $B_{1}^{n}=\met{\mu}$, and $\int_{\R^{n}}\norm x{}_{1}\d{\mu}(x)=2n$.
Therefore, the lower bound is attained by $\mu$.
\end{proof}

\subsubsection{\textbf{The vertex index of the Euclidean ball}}
\begin{prop}
\label{prop:vein_Ball}We have that $\fvein{B_{2}^{n}}=\sqrt{2\pi n}\left(1+{\rm o}\left(1\right)\right).$
\end{prop}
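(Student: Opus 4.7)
The plan has two halves matching the two directions of the asserted equality. For both, the central object is the average $I_n := \int_{\Sph^{n-1}} |\iprod{\theta}{e_1}| \d{\sigma}(\theta)$, which by \eqref{eq:aver_inner_prod} satisfies $I_n = \sqrt{2/(\pi n)}(1+{\rm o}(1))$.

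\textbf{Upper bound.} I would reuse the construction from the proof of Theorem \ref{thm:main_upper_sym} in the particular case $K = B_2^n$. Namely, set $R = 1/I_n$ and take $\mu = 2\sigma_R$, the uniform measure of mass $2$ on the sphere $R\Sph^{n-1}$. That proof shows $\met{\mu} = B_2^n$, so $\mu$ is admissible for $\fvein{B_2^n}$. Since $\mu$ is supported on $R\Sph^{n-1}$ and has total mass $2$, the cost is
\[
\int_{\R^n} |x| \d{\mu}(x) = 2R = \frac{2}{I_n} = \sqrt{2\pi n}(1+{\rm o}(1)).
\]

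\textbf{Lower bound.} Here I would invoke Proposition \ref{prop:necessary_1} applied to $K = B_2^n$: for any admissible $\mu$ and every $\theta\in \Sph^{n-1}$,
\[
1 = h_{B_2^n}(\theta) \le \int_{H_\theta^+} \iprod{x}{\theta} \d{\mu}(x) = \int_{\R^n} \max\{\iprod{x}{\theta},0\} \d{\mu}(x).
\]
Integrate this inequality against $\d{\sigma}(\theta)$ over $\Sph^{n-1}$ and swap the order of integration by Fubini:
\[
1 \le \int_{\R^n} \left(\int_{\Sph^{n-1}} \max\{\iprod{x}{\theta},0\} \d{\sigma}(\theta)\right) \d{\mu}(x).
\]
By symmetry of $\sigma$, the inner integral equals $\tfrac{1}{2}\int_{\Sph^{n-1}} |\iprod{x}{\theta}|\d{\sigma}(\theta) = \tfrac{|x|}{2} I_n$. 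Substituting and rearranging,
\[
\int_{\R^n} |x| \d{\mu}(x) \ge \frac{2}{I_n} = \sqrt{2\pi n}(1+{\rm o}(1)).
\]
Taking the infimum over admissible $\mu$ yields $\fvein{B_2^n} \ge \sqrt{2\pi n}(1+{\rm o}(1))$, matching the upper bound.

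\textbf{Remarks on difficulty.} Neither direction is really an obstacle: the upper bound is essentially a by-product of the construction already used in Theorem \ref{thm:main_upper_sym}, and the lower bound is a one-line Fubini applied to Proposition \ref{prop:necessary_1}, exploiting that for $B_2^n$ the Euclidean norm is exactly the norm used in the cost, so the spherical average $\int_{\Sph^{n-1}}|\iprod{x}{\theta}|\d{\sigma}(\theta)$ produces precisely $|x|I_n$. The only quantitative input is the standard asymptotic \eqref{eq:aver_inner_prod}, and the same constant $2/I_n$ appears on both sides, which is why the answer is a genuine equality (with the sharp constant $\sqrt{2\pi n}$) rather than merely a two-sided estimate up to a universal factor.
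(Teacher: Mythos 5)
Your proof is correct. The upper bound is identical to the paper's (both reuse the measure $\mu=2\sigma_{R}$ from Theorem \ref{thm:main_upper_sym}), but your lower bound takes a genuinely different and shorter route. The paper first invokes Lemmas \ref{lem:approx_infinite}, \ref{lem:appx_discrete} and Proposition \ref{prop:scaling_effect} to reduce to a finite discrete measure supported on $r\Sph^{n-1}\cup\left\{ 0\right\} $, and then averages the \emph{measure} over ${\rm SO}\left(n\right)$ to produce a radial measure $D_{0}\delta_{0}+D_{r}\sigma_{r}$ to which Proposition \ref{prop:necessary_1} and \eqref{eq:aver_inner_prod} are applied. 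You instead average the \emph{inequality} of Proposition \ref{prop:necessary_1} over $\theta\in\Sph^{n-1}$ and swap the order of integration; since $\int_{\Sph^{n-1}}\max\left\{ \iprod x{\theta},0\right\} \d{\sigma}\left(\theta\right)=\frac{\left|x\right|}{2}I_{n}$ by rotation invariance and symmetry of $\sigma$, the bound $\int\left|x\right|\d{\mu}\ge2/I_{n}$ drops out directly for an arbitrary admissible $\mu$, with no discretization or pushforward machinery. These are dual implementations of the same rotational-symmetry idea, but yours buys a cleaner argument that works for general Borel measures in one step. The only point worth a sentence of care is the Fubini--Tonelli swap: an admissible $\mu$ need not be finite (it may carry infinite mass at the origin), so strictly one should either assume $\int\norm x_{B_{2}^{n}}\d{\mu}<\infty$ (otherwise the bound is vacuous) and note that $\mu$ restricted to $\R^{n}\setminus\left\{ 0\right\} $ is then $\sigma$-finite, or simply discard the origin, which contributes nothing to either side. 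This is cosmetic and does not affect the validity of the argument.
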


\begin{proof}
The upper bound $\fvein{B_{2}^{n}}\le\sqrt{2\pi n}\left(1+{\rm o}\left(1\right)\right)$
follows verbatim from the proof of theorem \ref{thm:main_upper_sym}
by considering the measure $\mu=2\sigma_{R}$, where $R=\left(\int_{\Sph^{n-1}}\left|\iprod x{\theta}\right|\d{\sigma}\left(x\right)\right)^{-1}$,
which implies that $\met{\mu}=B_{2}^{n}$ , and 
\[
\int_{\mathbb{R}^{n}}\norm x{}_{B_{2}^{n}}\d{\mu}=2R=\sqrt{2\pi n}\left(1+o\left(1\right)\right).
\]

Next, we show that $\fvein{B_{2}^{n}}\ge\sqrt{2\pi n}\left(1+{\rm o}\left(1\right)\right)$.
Let $\mu$ be any measure satisfying that $K\sub\met{\mu}$. By Lemmas
\ref{lem:approx_infinite}, \ref{lem:appx_discrete}, and Proposition
\ref{prop:scaling_effect}, we may assume without loss of generality
that $\mu$ is discrete, finite, and that $\supp{\mu}\sub r\Sph^{n-1}\cup\left\{ 0\right\} $
for some $r>0$. By adding $\delta_{0}$ to $\mu$ at no additional
cost, we may also assume that $\mu$ has an atom $D_{0}\delta_{0}$
at the origin.

Let ${\rm SO}\left(n\right)$ be the rotation group on $\R^{n}$,
and let $\xi$ be the normalized probability Haar measure on ${\rm SO}\left(n\right)$.
We define the radial measure $\mu_{0}$ by letting
\[
\mu_{0}\left(A\right)=\int_{{\rm SO}\left(n\right)}\mu\left(u^{-1}A\right)\d{\xi}\left(u\right)
\]
for any Borel set $A\sub\R^{n}$. Note that $\mu_{0}=D_{0}\delta_{0}+D_{r}\sigma_{r}$,
where $D_{r}r=\int_{\R^{n}}\norm x_{B_{2}^{n}}\d{\mu}\left(x\right)$.
By Proposition \ref{prop:necessary_1}, for any $\theta\in\Sph^{n-1}$
we have 
\begin{align*}
\int_{H_{\theta}^{+}}\iprod x{\theta}\d{\mu_{0}}\left(x\right) & =\int_{{\rm SO}\left(n\right)}\int_{u^{-1}H_{\theta}^{+}}\iprod{ux}{\theta}\d{\mu}\left(x\right)\d{\xi}\left(u\right)=\int_{{\rm SO}\left(n\right)}\int_{H_{u^{-1}\theta}^{+}}\iprod x{u^{-1}\theta}\d{\mu}\left(x\right)\d{\xi}\left(u\right)\\
 & \ge\int_{{\rm SO}\left(n\right)}\d{\xi}\left(u\right)=1.
\end{align*}
Combined with \eqref{eq:aver_inner_prod}, the above inequality implies
that 
\[
2\le\int_{\R^{n}}\left|\iprod x{e_{1}}\right|\d{\mu_{0}}\left(x\right)=D_{r}r\int_{\Sph^{n-1}}\left|\iprod{\theta}{e_{1}}\right|\d{\sigma}\left(\theta\right)=D_{r}r\sqrt{\frac{2}{\pi n}}\left(1+{\rm o}\left(1\right)\right).
\]
Therefore, it follows that $\int_{\R^{n}}\norm x_{B_{2}^{n}}\d{\mu}\left(x\right)=D_{r}r\ge\sqrt{2\pi n}\left(1+{\rm o}\left(1\right)\right)$,
as claimed.  
\end{proof}
\begin{rem}
A shorter argument provides the slightly worst lower bound $\fvein{B_{2}^{n}}\ge2\sqrt{n}$.
Indeed, since $d\left(B_{2}^{n},\,B_{1}^{n}\right)=\sqrt{n}$, Proposition
\ref{prop:fvein_B1} and Fact \ref{fact:BM_distance} imply that 
\[
\fvein{B_{2}^{n}}\ge\frac{\fvein{B_{1}^{n}}}{\sqrt{n}}=2\sqrt{n}.
\]
\end{rem}

\subsection{A Lower bound }

This section is devoted for the proof of Theorem \ref{thm:fvein_lower_bdd}. 

We will need the following fact which relates the fractional vertex
index of two convex bodies through their Banach-Mazur distance. Let
$I_{n}:\R^{n}\to\R^{n}$ denote the identity operator on $\R^{n}$.
Let $d\left(K,L\right)$ denote the Banach-Mazur distance between
two centrally-symmetric convex bodies, $K,L\sub\R^{n}$. In \cite{BezLitvak07},
the authors show that $\vein K\le\vein Ld\left(K,L\right)$. Analogously,
we have:
\begin{fact}
\label{fact:BM_distance}Let $K,L$ be centrally-symmetric convex
bodies in $\R^{n}$. Then 
\[
\fvein K\le d\left(K,L\right)\fvein L.
\]
\end{fact}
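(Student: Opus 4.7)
The plan is to mimic the classical argument of Bezdek--Litvak, exploiting the linear invariance of $\fvein{\cdot}$ (Fact \ref{fact:Linear_invariant}) together with the scaling behaviour of metronoids (Fact \ref{fact:met_lin_inv}). Since both sides of the claimed inequality are linear invariants of $K$ and $L$ separately, I would first normalize the configuration: by the definition of the Banach--Mazur distance and linear invariance, I may replace $L$ by $TL$ for a suitable $T\in\mathrm{GL}_n(\R)$ and assume
\[
L\subseteq K\subseteq d\cdot L,\qquad d:=d(K,L).
\]

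Next, starting from an arbitrary Borel measure $\mu$ with $L\subseteq\met{\mu}$, the idea is to dilate $\mu$ by the factor $d$ in order to produce a measure that captures $K$. Concretely, I would set $\nu=(dI_n)\#\mu$, the pushforward of $\mu$ under the homothety $x\mapsto dx$. By Fact \ref{fact:met_lin_inv}, $\met{\nu}=d\,\met{\mu}$, so
\[
K\subseteq d\cdot L\subseteq d\,\met{\mu}=\met{\nu},
\]
showing that $\nu$ is admissible in the definition of $\fvein{K}$.

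It then remains to compare costs. Using Fact \ref{fact:met_lin_inv} once more (applied to the gauge $\|\cdot\|_K$), I get
\[
\int_{\R^n}\|x\|_K\d{\nu}(x)=\int_{\R^n}\|dx\|_K\d{\mu}(x)=d\int_{\R^n}\|x\|_K\d{\mu}(x).
\]
Because $L\subseteq K$, the gauges satisfy $\|x\|_K\le\|x\|_L$ pointwise, hence $\int\|x\|_K\d\mu\le\int\|x\|_L\d\mu$. Combining,
\[
\fvein{K}\le\int_{\R^n}\|x\|_K\d{\nu}(x)\le d\int_{\R^n}\|x\|_L\d{\mu}(x).
\]
Taking the infimum over all admissible $\mu$ yields $\fvein{K}\le d(K,L)\fvein{L}$.

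No real obstacle arises: the argument is essentially bookkeeping, with the only subtle point being the asymmetry between the inclusion $L\subseteq K$ (which controls $K$) and the inclusion $K\subseteq dL$ (which controls the scaling cost). The centrally-symmetric hypothesis is used solely to make $\|\cdot\|_K$ and $\|\cdot\|_L$ genuine norms so that the comparison of gauges from the inclusion $L\subseteq K$ is unambiguous.
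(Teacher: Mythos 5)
Your argument is correct and is essentially the paper's own proof: normalize via Banach--Mazur so that $TL\subseteq K\subseteq d\cdot TL$, push an admissible measure for $TL$ forward by the dilation $dI_{n}$, and compare costs using $\norm{x}_{K}\le\norm{x}_{TL}$ together with the homogeneity of the gauge. The only (standard, harmless) point glossed over in both write-ups is that the infimum defining $d(K,L)$ is attained, or else one works with $d(K,L)+\eps$ and lets $\eps\to0$.
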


\begin{proof}
Let $T$ be some invertible linear transformation such that $TL\sub K\sub d\left(K,L\right)TL$.
Suppose $\mu$ is a measure satisfying that $TL\sub\met{\mu}$. Then,
by Fact \ref{fact:met_lin_inv}, $K\sub\met{d(K,L)\cdot I_{n}\#\mu}$
and hence 
\[
\fvein K\le d\left(K,L\right)\int\norm x_{\R^{n}K}\d{\mu}\left(x\right)\le\int_{\R^{n}}\norm x_{L}\d{\mu}\left(x\right).
\]
Since $\fvein L$ is linear-invariant, and $\mu$ is arbitrary, it
follows that 
\[
\fvein K\le d\left(K,L\right)\fvein{TL}=d\left(K,L\right)\fvein L.
\]
\end{proof}
We shall also use the following proportional Dvoretzky-Rogers factorization
Theorem by Bourgain and Szarek:
\begin{thm*}
[Bourgain and Szarek \cite{BourSz88}] If $\left(X,\|\cdot\|\right)$
is an $n$-dimensional normed space and $\epsilon\in(0,1)$, there
exists vectors $x_{1},\dots,x_{m}\in X$, $m\ge(1-\epsilon)n$, such
that for any real $t_{1},\dots,t_{n}$, 
\[
\max_{j\le m}|t_{j}|\le\norm{\sum_{j\le m}t_{j}x_{j}}_{X}\le c\epsilon^{-d}\left(\sum_{j\le m}t_{j}^{2}\right)^{\frac{1}{2}},
\]
where $c,d>0$ are absolute constants. 
\end{thm*}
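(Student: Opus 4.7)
Identify $X$ with $\R^{n}$ in John's position, so that the unit ball $B_X$ of $X$ contains $B_{2}^{n}$ as its maximal-volume inscribed ellipsoid; then $B_{2}^{n}\subseteq B_X\subseteq\sqrt{n}\,B_{2}^{n}$. John's theorem supplies contact points $u_{1},\dots,u_{N}\in\Sph^{n-1}\cap\partial B_X$ and positive weights $c_{1},\dots,c_{N}$ satisfying
\[
\sum_{i=1}^{N}c_{i}\,u_{i}\otimes u_{i}=I_{n},\qquad \sum_{i=1}^{N}c_{i}=n.
\]
Because each $u_{i}$ lies on both $\partial B_{2}^{n}$ and $\partial B_{X}$, the hyperplane $\{\iprod{\cdot}{u_{i}}=1\}$ supports $B_X$ at $u_{i}$, so the functional $u_{i}^{\ast}:=\iprod{u_{i}}{\cdot}$ has $X^{\ast}$-norm equal to $1$.

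\textbf{The core step: proportional extraction.} The heart of the proof is to select a subset $S\subseteq\{1,\dots,N\}$ of cardinality $m\ge(1-\epsilon)n$ such that the Gram matrix $G_{S}:=(\iprod{u_{i}}{u_{j}})_{i,j\in S}$ is well-conditioned, with spectrum in an interval $[c_{1}(\epsilon),c_{2}(\epsilon)]$ whose ratio is polynomial in $\epsilon^{-1}$. Two routes are available. The original Bourgain--Szarek argument is deterministic: one iteratively discards the index whose rank-one contribution perturbs the currently accumulated identity the most, and the trace identity $\sum c_{i}=n$ forces only $O(\epsilon n)$ removals. A modern alternative is random selection, keeping $i$ with probability proportional to $c_{i}$; Rudelson's non-commutative Khintchine inequality then controls $\|G_{S}-\alpha I\|_{\mathrm{op}}$ with high probability. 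After extraction, a Gram--Schmidt step within $\mathrm{span}\{u_{i}:i\in S\}$ produces unit vectors $x_{j}$ ($j\in S$) together with biorthogonal functionals $\phi_{j}$ whose dual norms are controlled by a function of $\epsilon$ alone (crucially using the $\|u_{k}^{\ast}\|_{X^{\ast}}=1$ bound on the dual contact points as one expands $\phi_{j}$ in the basis $\{u_{k}^{\ast}\}$ via $G_{S}^{-1}$).

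\textbf{Deriving the two inequalities.} Since $B_{2}^{n}\subseteq B_X$ we have $\|y\|_{X}\le|y|_{2}$ for all $y$, whence
\[
\norm{\sum_{j\in S}t_{j}x_{j}}_{X}\le\Big|\sum_{j\in S}t_{j}x_{j}\Big|_{2}\le\sqrt{c_{2}(\epsilon)}\,\Big(\sum_{j\in S}t_{j}^{2}\Big)^{1/2},
\]
which, after absorbing conditioning constants into a rescaling of the $x_{j}$, is the upper estimate with $c\epsilon^{-d}=\sqrt{c_{2}(\epsilon)/c_{1}(\epsilon)}$. For the lower estimate, the biorthogonal functionals $\phi_{j}$ (normalized so that $\|\phi_{j}\|_{X^{\ast}}\le1$) yield $|t_{j}|=|\phi_{j}(\sum_{i}t_{i}x_{i})|\le\norm{\sum_{i}t_{i}x_{i}}_{X}$.

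\textbf{Main obstacle.} The genuinely hard step is the proportional extraction: naively extracting a fully orthonormal subsystem would cost a factor of $n$ rather than keeping $(1-\epsilon)n$ of the contact points. Pushing the selection up to proportional dimension requires either the intricate iterative trace argument of Bourgain--Szarek, or the deep non-commutative concentration of Rudelson, and the precise exponent $d$ in $\epsilon^{-d}$ is tied directly to the quantitative strength of whichever tool one uses. A secondary subtlety is that one must carry the conditioning of $G_{S}$ through both the primal (upper) and dual (lower) estimates simultaneously, which is why the Gram--Schmidt/biorthogonalisation step after selection must be executed in a way that controls $\|\phi_{j}\|_{X^{\ast}}$ rather than only the Euclidean norm of the $\phi_{j}$.
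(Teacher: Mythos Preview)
The paper does not prove this statement. It is quoted verbatim as a known result of Bourgain and Szarek \cite{BourSz88} and used as a black box (via its geometric reformulation, Theorem~\ref{thm:DvoretzkyRogerGeometricVersion}) in the proof of Theorem~\ref{thm:fvein_lower_bdd}. So there is no ``paper's own proof'' to compare your proposal against.

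That said, as a sketch of the Bourgain--Szarek argument your outline is broadly on target: John's position, the decomposition of the identity via contact points, and the observation that each contact functional has dual norm $1$ are exactly the right starting ingredients, and you correctly identify the proportional extraction of a well-conditioned subsystem as the genuine content. Two minor cautions. First, the Rudelson random-selection route you mention is historically later and gives a different (and in some respects better) dependence on $\epsilon$; the original 1988 paper used a deterministic iterative argument, so if you are trying to reconstruct \emph{their} proof you should pursue that branch. Second, the post-extraction Gram--Schmidt you describe is not really needed: once the extracted contact points $u_{j}$ have a well-conditioned Gram matrix, the vectors $x_{j}=u_{j}$ themselves already satisfy both inequalities, since $\|u_{j}\|_{X}=1$ gives the upper Euclidean bound directly and the dual contact functionals $u_{j}^{\ast}$ combined with the inverse Gram matrix give the biorthogonal system with controlled $X^{\ast}$-norm. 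Introducing an extra orthogonalisation step risks losing the crucial property $\|x_{j}\|_{X}=1$ that anchors the dual estimate.
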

Let us fix an orthogonal basis $\left\{ e_{1},\dots,e_{n}\right\} $
of $\R^{n}$, and $\epsilon=\frac{1}{2}$. Given a subspace $E\sub\R^{n}$
and $1\le p\le\infty$, denote $B_{p}^{E}=B_{p}^{n}\cap E$. For our
purpose, it will be enough to use the following simpler geometric
version of the above proportional Dvoretzky-Rogers factorization Theorem:
\begin{thm}
\label{thm:DvoretzkyRogerGeometricVersion} Let $K\subset\R^{n}$
be a centrally-symmetric convex body. Let $E\sub\R^{n}$ be the subspace
spanned by $e_{1},\cdots,e_{\left\lceil n/2\right\rceil }$. Then
there exists a linear transformation $T\in{\rm GL}_{n}\left(\R\right)$
such that 
\[
cB_{2}^{E}\sub TK\cap E\sub B_{\infty}^{E},
\]
where $c>0$ is a universal constant. 
\end{thm}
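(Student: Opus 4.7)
The plan is to derive Theorem \ref{thm:DvoretzkyRogerGeometricVersion} as a direct geometric reformulation of the Bourgain--Szarek proportional Dvoretzky--Rogers factorization theorem, applied to the normed space $(\R^n, \norm{\cdot}_K)$ with the parameter $\epsilon = 1/2$.

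First I would invoke Bourgain--Szarek to obtain vectors $x_1, \dots, x_m \in \R^n$ with $m \ge n/2$ satisfying
\[
\max_{j \le m} |t_j| \le \norm{\sum_{j \le m} t_j x_j}_K \le C \Bigl( \sum_{j \le m} t_j^2 \Bigr)^{1/2}
\]
for every choice of scalars $(t_j)$, where $C$ is a universal constant (the $\epsilon$-dependent factor becomes fixed once $\epsilon = 1/2$). Since $m$ is a positive integer with $m \ge n/2$, one has $m \ge \lceil n/2 \rceil$, and setting redundant coefficients to zero shows that the inequalities remain valid on the first $\lceil n/2 \rceil$ vectors, so I may assume $m = \lceil n/2 \rceil$. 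The lower bound, applied to arbitrary $(t_j)$, forces $x_1, \dots, x_{\lceil n/2 \rceil}$ to be linearly independent; I then extend them to a basis $x_1, \dots, x_n$ of $\R^n$ and define $T \in {\rm GL}_n(\R)$ by $T x_j = e_j$ for each $j$. In particular, $T$ maps the span of $x_1, \dots, x_{\lceil n/2 \rceil}$ onto $E$.

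Next I would translate the factorization into the claimed inclusions. For any $y = \sum_{j=1}^{\lceil n/2 \rceil} t_j e_j \in E$, the identity $T^{-1} y = \sum_j t_j x_j$ gives that $y \in TK$ if and only if $\norm{\sum_j t_j x_j}_K \le 1$. The lower Bourgain--Szarek inequality then implies that $y \in TK \cap E$ forces $\max_j |t_j| \le 1$, so that $TK \cap E \sub B_\infty^E$. The upper inequality implies that $\norm y_2 \le 1/C$ forces $y \in TK$, yielding $(1/C)\, B_2^E \sub TK \cap E$. Setting $c = 1/C$ completes the proof.

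The argument is essentially a bookkeeping translation between the analytic Bourgain--Szarek statement and the requested geometric picture. The only mildly delicate points are noting that the Bourgain--Szarek vectors are automatically linearly independent (forced by the lower inequality) and that passing to a sub-family preserves both bounds. There is no substantive obstacle beyond a careful invocation of the quoted factorization theorem.
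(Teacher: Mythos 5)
Your proof is correct: the paper itself states this theorem without proof, treating it as an immediate geometric reformulation of the quoted Bourgain--Szarek factorization, and your derivation (restricting to $\lceil n/2\rceil$ of the vectors, noting their linear independence from the lower bound, and sending them to $e_1,\dots,e_{\lceil n/2\rceil}$ by an invertible $T$) is exactly the intended translation.
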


We are now ready to prove Theorem \ref{thm:fvein_lower_bdd}:
\begin{proof}[Proof of Theorem \ref{thm:fvein_lower_bdd} ]
 By Theorem \ref{thm:DvoretzkyRogerGeometricVersion}, applied to
$K^{\circ}$, and the fact that $B_{\infty}^{n}\sub\sqrt{n}B_{2}^{n}$,
there exists $T\in{\rm GL}{}_{n}\left(\R\right)$ such that 
\begin{equation}
\frac{c}{\sqrt{n}}B_{\infty}^{E}\sub TK^{\circ}\cap E\sub B_{\infty}^{E}.\label{eq:DvoFact_Inter}
\end{equation}

Let $\proj_{E}:\R^{n}\to E$ denote the orthogonal projection onto
$E$, and set $\widetilde{K}=\left(T^{-1}\right)^{\intercal}K$. By
the properties of polarity, \eqref{eq:DvoFact_Inter} is equivalent
to
\begin{align*}
B_{1}^{E} & \sub\proj_{E}\widetilde{K}\sub\frac{\sqrt{n}}{c}B_{1}^{E}.
\end{align*}
In other words, we have that $d\left(\proj_{E}K,\,B_{1}^{E}\right)\le\frac{\sqrt{n}}{c}$
.

Next, fix $\eps>0$. By Lemmas \ref{lem:approx_infinite} and \ref{lem:appx_discrete},
there exists a finite discrete measure $\mu$ of the form $\mu=\sum_{i=1}^{N}w_{i}\delta_{x_{i}}$
such that $K\sub\met{\mu}$ and $\int_{\R^{n}}\norm x_{K}\d{\mu}\left(x\right)\le\fvein K+\eps$.
Consider the measure $\nu$ on $E$ defined by $\nu=\sum_{i=1}^{N}w_{i}\delta_{\proj_{E}\left(x_{i}\right)}$.
One can verify that $\proj_{E}\left(K\right)\sub\met{\nu}$. Moreover,
since $\norm x_{K}\ge\norm{\proj_{E}\left(x\right)}_{\proj_{E}\left(K\right)}$
for any $x\in\R^{n}$, it follows that 
\[
\fvein K+\eps\ge\int_{\R^{n}}\norm x_{K}\d{\mu}\left(x\right)\ge\int_{E}\norm y_{\proj_{E}\left(K\right)}\d{\nu}\left(y\right)\ge\fvein{\proj_{E}\left(K\right)}.
\]
On the other hand, by Fact \ref{fact:BM_distance}, we have that 
\[
\fvein{P_{E}K}\ge\frac{\fvein{B_{1}^{E}}}{d\left(B_{1}^{E},\,\proj_{E}K\right)}\ge c\sqrt{n},
\]
where we used $\fvein{B_{1}^{E}}=2\dim\left(E\right)=n$ in the last
inequality. 
\end{proof}
\begin{rem}
Note that the upper bound $\fvein K\le e^{2}n$, for any convex body
$K$, immediately follows from Corollary \ref{cor:upper_bdd}.
\end{rem}

\section{An application to centroid bodies \label{sec:app_centroid}}

In this section we show how Corollary \ref{cor:centrod_low_bdd} is
a direct consequence of Theorem \ref{thm:fvein_lower_bdd}. 

\subsection{Reformulating $\protect\fvein K$}

Let $\mu$ be a Borel measure such that $K\sub\met{\mu}$. By Lemmas
\ref{lem:approx_infinite}, \ref{lem:appx_discrete}, and Proposition
\ref{prop:scaling_effect}, we may assume without loss of generality
that $\mu$ is a finite discrete measure with $\mu\left(\R^{n}\right)=2$
and $\mu\left(\left\{ 0\right\} \right)=1$.  Let $\mathscr{D}_{n}$
be the collection of all non-degenerate finite discrete probability
measure. Then, the previous argument implies that 

\[
\text{vein}^{*}(K)=\inf\left\{ \int_{\R^{n}}\norm x_{K}\d{\mu}\left(x\right)\,:\,\mu\in\mathscr{D}_{n},\,K\sub\met{\mu+\delta_{0}}\right\} .
\]
Moreover, if $K=-K$ then for each measure $\mu$ such that $K\sub\met{\mu}$,
we can define the symmetric measure $\nu$ by $\nu(A)=\frac{\mu(A)+\mu(-A)}{2}$
for any Borel set $A\sub\R^{n}$. Then $\nu$ is a symmetric measure,
satisfying that $K\subset\met{\nu}$ and $\int_{\R^{n}}\norm x{}_{K}\d{\nu}(x)=\int_{\R^{n}}\norm x{}_{K}\d{\mu}(x).$
Therefore, we conclude that 

\begin{equation}
\text{vein}^{*}(K)=\inf\left\{ \int_{\R^{n}}\norm x_{K}\d{\mu}\left(x\right)\,:\,\mu\in\mathscr{D}_{n}^{s},\,K\sub\met{\mu+\delta_{0}}\right\} ,\label{eq:fvein__prob}
\end{equation}
where $\mathscr{D}_{n}^{s}$ is the collection of all symmetric non-degenerate
finite discrete probability measures. 

In view of Remark \ref{rem:fvein_zonotope}, the above equality immediately
implies the following reformulation of the fractional vertex index:
\begin{prop}
For any convex body $K\sub\R^{n}$, we have

\[
\fvein K=\inf\left\{ \sum_{i=1}^{m}\norm{y_{i}}{}_{K}\:,\:K\sub Z(y_{1},\cdots,y_{m})\right\} .
\]
\end{prop}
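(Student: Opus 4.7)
The plan is to combine the reformulation of $\fvein K$ derived just above the statement, namely
\[
\fvein K = \inf\left\{ \int_{\R^{n}} \norm{x}_{K} \d{\mu}(x) \,:\, \mu \in \mathscr{D}_{n},\, K \sub \met{\mu + \delta_{0}} \right\},
\]
with Remark \ref{rem:fvein_zonotope}, which identifies the metronoid with the zonoid $Z(\mu+\delta_{0})$ whenever the underlying measure has total mass at most $2$ and an atom of mass at least $1$ at the origin. Since for $\mu \in \mathscr{D}_{n}$ we have $(\mu+\delta_{0})(\R^{n}) = 2$ and $(\mu+\delta_{0})(\{0\}) \ge 1$, the remark applies verbatim, so the whole problem of approximation by metronoids collapses onto a problem of covering $K$ by zonotopes generated by finitely many vectors.

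For the lower bound $\fvein K \ge \inf \sum \norm{y_{i}}_{K}$, I would take an arbitrary $\mu = \sum_{i=1}^{m} w_{i}\delta_{x_{i}} \in \mathscr{D}_{n}$ with $K \sub \met{\mu+\delta_{0}}$. By the remark, $\met{\mu+\delta_{0}} = Z(\mu+\delta_{0})$, and unwinding the definition of $Z$ on a discrete measure gives
\[
Z(\mu+\delta_{0}) = \left\{\sum_{i=1}^{m} \lambda_{i} w_{i} x_{i} \,:\, 0 \le \lambda_{i} \le 1 \right\} = Z(w_{1}x_{1},\dots,w_{m}x_{m}).
\]
Setting $y_{i} := w_{i}x_{i}$ then yields $K \sub Z(y_{1},\dots,y_{m})$ with matched cost $\int \norm{x}_{K}\d{\mu} = \sum w_{i}\norm{x_{i}}_{K} = \sum \norm{y_{i}}_{K}$, and passing to the infimum gives the desired inequality.

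For the reverse inequality, given $y_{1},\dots,y_{m}$ with $K \sub Z(y_{1},\dots,y_{m})$, I would construct the finite probability measure $\mu := \frac{1}{m}\sum_{i=1}^{m} \delta_{m y_{i}}$. Since $K$ has non-empty interior, so does $Z(y_{1},\dots,y_{m})$, which forces $y_{1},\dots,y_{m}$ (and hence $my_{1},\dots,my_{m}$) to span $\R^{n}$, so $\mu \in \mathscr{D}_{n}$. Applying Remark \ref{rem:fvein_zonotope} once more,
\[
\met{\mu+\delta_{0}} = Z(\mu+\delta_{0}) = Z(y_{1},\dots,y_{m}) \supset K,
\]
and the associated cost is $\int \norm{x}_{K}\d{\mu}(x) = \sum \frac{1}{m}\cdot m \norm{y_{i}}_{K} = \sum \norm{y_{i}}_{K}$, yielding the matching upper bound. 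There is no genuine obstacle here: the argument is essentially a bookkeeping translation between discrete measures and their associated zonoids, and the only non-trivial input is Remark \ref{rem:fvein_zonotope} (together with the earlier reduction \eqref{eq:fvein__prob} to finite discrete measures with a unit atom at $0$), both of which have already been established.
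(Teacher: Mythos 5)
Your proof is correct and follows exactly the route the paper intends: the paper states the proposition as an immediate consequence of the reduction to $\mathscr{D}_{n}$ and Remark \ref{rem:fvein_zonotope}, and your argument simply supplies the (correct) bookkeeping translating between a discrete measure $\sum w_{i}\delta_{x_{i}}$ plus a unit atom at the origin and the zonotope $Z(w_{1}x_{1},\dots,w_{m}x_{m})$, in both directions. No discrepancy with the paper's approach.
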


\subsection{A relation to $L_{1}$-centroid bodies }

Let $\mathcal{K}_{n}$ be the class of all symmetric convex bodies
in $\R^{n}$, and $\mathscr{F}_{n}$ the class of all non-degenerate
Borel probability measures on $\R^{n}$ with bounded first moment.
We have the following equivalence:
\begin{prop}
\label{prop:vein_Z1_equiv}~${\displaystyle \inf_{K\in\mathcal{K}_{n}}\fvein K=2\inf_{\mu\in\mathscr{F}_{n}}\int_{\R^{n}}\norm x{}_{Z_{1}(\mu)}\d{\mu}\left(x\right).}$
\end{prop}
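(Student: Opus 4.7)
The plan is to use the reformulation \eqref{eq:fvein__prob} of $\fvein K$ in tandem with Proposition \ref{prop:MuCentroid-1}, which identifies $\met{\mu+\delta_{0}}$ with $\tfrac{1}{2}Z_{1}(\mu)$ whenever $\mu$ is a symmetric Borel probability measure (since $\mu+\delta_{0}$ then has total mass $2$ and an atom of mass at least $1$ at the origin). Once this dictionary is in place, each of the two inequalities follows by exhibiting a natural competitor.

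For the inequality $\inf_{K}\fvein K\le 2\inf_{\mu}\int\norm{x}_{Z_{1}(\mu)}\d{\mu}(x)$, take any $\mu\in\mathscr{F}_{n}$ and replace it by its symmetrization $\mu^{s}(A):=(\mu(A)+\mu(-A))/2$. Because $Z_{1}(\mu)$ is centrally symmetric by definition, a direct computation shows $Z_{1}(\mu^{s})=Z_{1}(\mu)$ and $\int\norm{x}_{Z_{1}(\mu)}\d{\mu^{s}}(x)=\int\norm{x}_{Z_{1}(\mu)}\d{\mu}(x)$. Setting $K:=\tfrac{1}{2}Z_{1}(\mu)\in\mathcal{K}_{n}$ (non-degeneracy of $K$ follows from that of $\mu$), Proposition \ref{prop:MuCentroid-1} applied to $\mu^{s}+\delta_{0}$ yields $\met{\mu^{s}+\delta_{0}}=\tfrac{1}{2}Z_{1}(\mu^{s})=K$. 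Using $\mu^{s}+\delta_{0}$ as a competitor in the defining infimum for $\fvein K$ then gives
\[
\fvein K\le\int_{\R^{n}}\norm{x}_{K}\d{(\mu^{s}+\delta_{0})}(x)=2\int_{\R^{n}}\norm{x}_{Z_{1}(\mu)}\d{\mu^{s}}(x)=2\int_{\R^{n}}\norm{x}_{Z_{1}(\mu)}\d{\mu}(x),
\]
so taking the infimum over $\mu\in\mathscr{F}_{n}$ on the right and over $K\in\mathcal{K}_{n}$ on the left yields the desired bound.

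For the reverse inequality, fix $K\in\mathcal{K}_{n}$ and $\varepsilon>0$. By \eqref{eq:fvein__prob} there exists $\mu\in\mathscr{D}_{n}^{s}\subseteq\mathscr{F}_{n}$ with $K\sub\met{\mu+\delta_{0}}$ and $\int\norm{x}_{K}\d{\mu}(x)\le\fvein K+\varepsilon$. Proposition \ref{prop:MuCentroid-1} again translates the inclusion $K\sub\met{\mu+\delta_{0}}$ into $2K\sub Z_{1}(\mu)$, which is in turn equivalent to the pointwise bound $\norm{x}_{K}\ge 2\norm{x}_{Z_{1}(\mu)}$. Integrating against $\mu$ gives
\[
\fvein K+\varepsilon\ge\int_{\R^{n}}\norm{x}_{K}\d{\mu}(x)\ge 2\int_{\R^{n}}\norm{x}_{Z_{1}(\mu)}\d{\mu}(x)\ge 2\inf_{\nu\in\mathscr{F}_{n}}\int_{\R^{n}}\norm{x}_{Z_{1}(\nu)}\d{\nu}(x),
\]
and letting $\varepsilon\to 0$ and then taking the infimum over $K$ finishes the proof.

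The conceptual work is entirely done by the identification $\met{\mu+\delta_{0}}=\tfrac{1}{2}Z_{1}(\mu)$ from Proposition \ref{prop:MuCentroid-1}; the only step that requires a little care is checking that the symmetrization $\mu\mapsto\mu^{s}$ affects neither $Z_{1}(\mu)$ nor the cost $\int\norm{x}_{Z_{1}(\mu)}\d{\mu}(x)$, which reduces to the evenness of the gauge $\norm{\cdot}_{Z_{1}(\mu)}$ and the definition of $Z_{1}(\mu)$ via absolute values.
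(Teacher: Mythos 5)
Your proof is correct and takes essentially the same route as the paper's: both rest on the reformulation \eqref{eq:fvein__prob} together with the identification $\met{\mu+\delta_{0}}=\tfrac{1}{2}Z_{1}(\mu)$ from Proposition \ref{prop:MuCentroid-1}. Your version is organized as two explicit inequalities rather than the paper's cyclic chain, and it usefully spells out the symmetrization step $\mu\mapsto\mu^{s}$ needed to turn a general $\mu\in\mathscr{F}_{n}$ into a competitor for $\fvein{\frac{1}{2}Z_{1}(\mu)}$, which the paper leaves implicit.
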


\begin{proof}
By \eqref{eq:fvein__prob}, we have that
\[
\fvein K=\inf\left\{ \int_{\R^{n}}\norm x{}_{K}\d{\mu}\left(x\right)\,:\,\mu\in\mathscr{D}_{n}^{s},K\sub\met{\mu+\delta_{0}}\right\} .
\]
Moreover, Proposition \ref{prop:MuCentroid-1} implies that for any
$\mu\in\mathscr{D}_{n}^{s}$, $\met{\mu+\delta_{0}}=\frac{1}{2}Z_{1}(\mu)$.
Therefore,
\begin{align*}
\inf_{K\in\mathcal{K}_{n}}\fvein K & =\inf_{K\in\mathcal{K}_{n}}\inf\left\{ \int_{\R^{n}}\norm x{}_{K}\d{\mu}\left(x\right)\,:\:\mu\in\mathscr{D}_{n}^{s},\:K\sub\frac{1}{2}Z_{1}(\mu)\right\} \\
 & \ge\inf_{K\in\mathcal{K}_{n}}\inf\left\{ \int_{\R^{n}}\norm x_{\frac{1}{2}Z_{1}\left(\mu\right)}\d{\mu}\left(x\right)\,:\:\mu\in\mathscr{F}_{n},\:K\sub\frac{1}{2}Z_{1}(\mu)\right\} .
\end{align*}
By observing that 
\[
\inf\left\{ \int_{\R^{n}}\norm x_{\frac{1}{2}Z_{1}\left(\mu\right)}\d{\mu}\left(x\right)\,:\:\mu\in\mathscr{F}_{n},\:K\sub\frac{1}{2}Z_{1}(\mu)\right\} \ge\inf\left\{ \int_{\R^{n}}\norm x_{\frac{1}{2}Z_{1}\left(\mu\right)}\d{\mu}\left(x\right)\,:\:\mu\in\mathscr{F}_{n}\right\} ,
\]
which does not depend on $K$, we obtain that 
\begin{align*}
\inf_{K\in\mathcal{K}_{n}}\fvein K & \ge\inf\left\{ \int_{\R^{n}}\norm x_{\frac{1}{2}Z_{1}\left(\mu\right)}\d{\mu}\left(x\right)\,:\:\mu\in\mathscr{F}_{n}\right\} \\
 & \ge\inf_{\mu\in\mathscr{F}_{n}}\fvein{\frac{1}{2}Z_{1}(\mu)}\\
 & \ge\inf_{K\in\mathcal{K}_{n}}\fvein K
\end{align*}
To conclude, we have that 
\[
\inf_{K\in\mathcal{K}_{n}}\fvein K=\inf\left\{ \int_{\R^{n}}\norm x_{\frac{1}{2}Z_{1}\left(\mu\right)}\d{\mu}\left(x\right)\,:\:\mu\in\mathscr{F}_{n}\right\} =2\inf_{\mu\in\mathscr{F}_{n}}\int_{\R^{n}}\norm x{}_{Z_{1}(\mu)}\d{\mu}\left(x\right),
\]
as claimed.
\end{proof}
Finally, note that Corollary \ref{cor:centrod_low_bdd} follows directly
from Theorem \ref{thm:fvein_lower_bdd} and Proposition \ref{prop:vein_Z1_equiv}.

\noindent \bibliographystyle{plain}
\bibliography{MATH_June_05_2017}

\end{document}